\DeclarePairedDelimiter\floor{\lfloor}{\rfloor}
\DeclareTextFontCommand{\emph}{\boldmath\bfseries}
\numberwithin{equation}{section}
\newcommand{\hi}{{\widehat{1}}}
\newcommand{\ho}{{\widehat{0}}}
\newcommand{\ov}[1]{\overline{#1}}
\newcommand{\wt}[1]{\widetilde{#1}}
\newcommand{\tsigma}{\widetilde{\sigma}}
\newcommand{\ses}{\widetilde{\Sigma}}
\newcommand{\Pyr}{\mathrm{Pyr}}
\newcommand{\Bary}{\mathrm{Bary}}
\newcommand{\cB}{\mathcal{B}}
\newcommand{\cP}{\mathcal{P}}
\newcommand{\cS}{\mathcal{S}}
\newcommand{\id}{\mathrm{id}}
\newcommand{\bK}{\mathbf{K}}
\definecolor{cof}{RGB}{219,144,71}
\definecolor{pur}{RGB}{186,146,162}
\definecolor{greeo}{RGB}{91,173,69}
\definecolor{greet}{RGB}{52,111,72}
\DeclareMathOperator{\rk}{rank}
\title[Poset subdivisions and the mixed $cd$-index]{Poset subdivisions and the mixed $cd$-index}
\author{Patrick Dornian}
\address{Patrick Dornian}
\email{patrick.dornian@gmail.com}
\author{Eric Katz}
\address{Eric Katz, Department of Mathematics, The Ohio State University University, 231 W. 18th Ave., Columbus, OH 43210}
\email{katz.60@osu.edu}
\author{Ling Hei Tsang}
\address{Ling Hei Tsang, Department of Mathematics, The Ohio State University University, 231 W. 18th Ave., Columbus, OH 43210}
\email{tsang.79@osu.edu}
\begin{document}

\begin{abstract}
The $cd$-index is an invariant of Eulerian posets expressed as a polynomial in noncommuting variables $c$ and $d$. It determines another invariant, the $h$-polynomial. In this paper, we study the relative setting, that of subdivisions of posets. We introduce the mixed $cd$-index, an invariant of strong formal subdivisions of posets, which determines the mixed $h$-polynomial introduced by the second author with Stapledon. The mixed $cd$-index is a polynomial in noncommuting variables $c',d',c,d$, and $e$ and is defined in terms of the local $cd$-index of Karu. Here, use is made of the decomposition theorem for the $cd$-index. We extend the proof of the decomposition theorem, originally due to Ehrenborg--Karu, to the class of strong formal subdivisions. We also compute the mixed $cd$-index in a number of examples.
\end{abstract}

\maketitle

\section{Introduction}

The enumeration of faces of polytopes is a central question in geometric combinatorics. One can count the number of faces or of flags, but there are many relations stemming from Euler's formula applied to faces and links of faces. Consequently, finding a packaging of the face and flag numbers is of key importance. Two approaches have emerged: the $h$-polynomial and the $cd$-index. The natural setting for each of these is not polytopes but rather Eulerian posets. These are graded posets such that any interval of positive length has an equal number of elements of each parity.

The $h$-polynomial in the case of simplicial polytopes emerges from counting the number of faces in each dimension, writing a generating function, and applying a linear change of variables. The redundancies are then reflected in the symmetry of the $h$-polynomial. The $h$-polynomial was generalized to Eulerian posets through a recursive definition inspired by intersection homology of toric varieties.

The $cd$-index is defined by first counting the number of flags of each rank set and encoding them in generating polynomial in noncommuting variables $a$ and $b$. As a consequence of the generalized Dehn--Sommerville equations \cite{bayer85}, this generating polynomial can be written more compactly as a polynomial in variables $c=a+b$ and $d=ab+ba$ \cite{bayer91}. See \cite{bayer19} for a survey. While the $cd$-index is not non-negative for every Eulerian poset, as shown in \cite[Exercise~3.193(a)]{stanley12}, it has desirable positivity properties for certain classes of posets \cite{stanley94,karu06,ehrenborg07}.

A natural relative extension of the study of polytopes is that of subdivisions of polytopes. Here, the natural notion of subdivision for an Eulerian poset is a strong formal subdivision by a lower Eulerian poset. 
A strong formal subdivision is a poset map $\phi\colon\Gamma\to B$ satisfying certain properties.
By work of the second author with Stapledon \cite{katz16}, one can enrich the $h$-polynomial to a multivariable analogue, the \emph{mixed $h$-polynomial}, which captures enumerative properties of the subdivision. The definition is motivated by the mixed Hodge theory of degenerating families of toric varieties (see \cite[Sec.~6]{katz16} for a discussion). The definition follows a certain pattern. One expresses enumerative invariants of $\Gamma$ in terms of those of $B$ and of $\Gamma_x=\phi^{-1}([\ho_B,x])$ for $x\in B$; this has the form
\[	h(\Gamma;t,u)= \sum_{ x \in B }  \ell^h_{[\ho_B,x]}(\Gamma_x;t,u) \cdot g([x,\hi_B];t,u)\]
where $h$ and $g$ are the (toric) $h$- and $g$-polynomials, and $\ell^h$ is the  \emph{local $h$-polynomial}, an invariant of subdivisions introduced by Stanley \cite{stanley92}. Such a formula is inspired by the decomposition theorem \cite{BBD}. To define an invariant of subdivisions,  one introduces new variables $t',u'$ to distinguish terms arising from invariants of $\Gamma_x$ and from invariants of $B$ and defines
	\[
		h_B(\Gamma;t',u',t,u) =  \sum_{x \in B } (t'u')^{\rk(\Gamma_x)} \cdot 
		\ell^h_{[\ho_B,x]}(\Gamma_x;t/t',u/u') \cdot g([x,\hi_B];tt',uu').
	\]
This invariant, by its nature, specializes to invariants of $\Gamma$ but also captures properties of the subdivision, often with desirable positivity properties. 

The purpose of this paper is to introduce the mixed $cd$-index $\Omega_{\phi}$ of a strong formal subdivision $\phi\colon\Gamma\to B$. We build on work by Karu \cite{karu06} and Ehrenborg--Karu \cite{ehrenborg07}. 
Karu defined in \cite{karu06} the \emph{local $cd$-index} $\ell^\Phi$
of a near-Gorenstein$^*$ posets. 
Using sheaf cohomology, Ehrenborg and Karu proved a decomposition theorem for the $cd$-index
for the class of strong Gorenstein$^*$ subdivisions  \cite{ehrenborg07}.
We extend the definition of the local $cd$-index to the class of near-Eulerian posets \cite{stanley94}, which are defined to be those obtained from an Eulerian poset by removing the unique maximal element $\hi$ and some element covered by $\hi$. We give a combinatorial proof of  the decomposition theorem for strong formal subdivisions of rank $0$:
\vspace{-1mm}
\begin{restatable}[Decomposition theorem for the $cd$-index]{thm}{restate}\label{thm:decomp}
Let $\Gamma$ be a lower Eulerian poset and $B$ be an Eulerian poset. 
	Let $\phi \colon  \Gamma \rightarrow B$ be a strong formal subdivision of rank $0$. 
	Then
	\[\Phi_{\Gamma} = \sum_{x \in B} \ell^\Phi_{\Gamma_x} \cdot \Phi_{[x, \hi]}. \]
\end{restatable}
\noindent We hope that this proof might be of independent interest.
For a strong formal subdivision $\phi \colon \Gamma\to B$ of rank $0$, we define the \emph{mixed $cd$-index} $\Omega_{\phi}$ as 
	\[\Omega_{\phi}(c',d',c,d, e) := \sum_{x \in B} \ell^\Phi_{
			\Gamma_x}(c',d') \cdot \Phi_{[x, \hi]}(c,d, e).
	\]
The mixed $cd$-index specializes to the $cd$-index of $\Gamma$.
While our approach to the mixed $cd$-index proceeds by analogy to the construction of the mixed $h$-polynomial, we hope to find a sheaf theoretic description and establish some positivity properties in future work. 

As an invariant of posets, the $cd$-index completely determines the $h$-polynomial \cite{bayer00}. As invariants of subdivisions, neither the local $cd$-index nor the $cd$-index is strong enough to determine even the local $h$-polynomial (see Example~\ref{exam:threesubs}).
Fortunately, the mixed $cd$-index does determine both the local $h$-polynomial and the mixed $h$-polynomial. Here, we follow the approach of Bayer--Ehrenborg \cite{bayer00} who rephrase the construction of the $cd$-index in terms of a coalgebra $R_\Phi$. In this case, we define a comodule $R_\Omega$ over $R_\Phi^e$ (where $R^e_\Phi$ is a slight modification of $R_\Phi$). The mixed $cd$-index, which is an element of $R_\Omega$ satisfies a structural equation (\ref{t:selfre}) phrased in terms of this comodule structure. Using this structural equation, we define two linear maps $L_\Omega$ and $H'_\Omega$ recursively and prove the following theorem:
\begin{theorem}\label{thm:mixedmaps}
The linear maps $L_\Omega\colon R_\Omega\rightarrow \bK[t,u]$, $H'_\Omega\colon R_{\Omega}\rightarrow \bK[t, t', u, u']$ send the mixed $cd$-index $\Omega$ to the local $h$-polynomial and the mixed $h$-polynomial respectively.
\end{theorem}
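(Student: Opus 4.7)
The plan is to extend the Bayer--Ehrenborg coalgebra formalism for the $cd$-index to the comodule $R_\Omega$. Bayer--Ehrenborg construct a linear map $L\colon R_\Phi\to \bK[t,u]$ sending $\Phi_P$ to the toric $h$-polynomial of $P$, characterized by a recursion phrased in terms of the coalgebra structure on $R_\Phi$; in the same spirit one obtains a map sending Karu's local $cd$-index $\ell^\Phi_P$ to Stanley's local $h$-polynomial $\ell^h_P$. I would mimic these constructions on $R_\Omega$, using the structural equation (\ref{t:selfre}) to define $L_\Omega$ and $H'_\Omega$ recursively, and then verify that the resulting maps send $\Omega_\phi$ to $\ell^h$ and to $h_B(\Gamma;t',u',t,u)$.

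First, I would define $L_\Omega\colon R_\Omega\to \bK[t,u]$ by recursion on rank via the $R_\Phi^e$-comodule structure. Here $R_\Omega$ naturally splits into a ``local'' $c',d'$ part and an Eulerian $c,d,e$ part coming from the comodule action; on the local piece the recursion matches $\ell^L$, and on the Eulerian piece it matches the Bayer--Ehrenborg map $L$. The structural equation forces the two recursions to fit together compatibly, so well-definedness reduces to checking consistency with the comodule relations.

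Second, I would define $H'_\Omega\colon R_\Omega\to \bK[t,t',u,u']$ by an entirely parallel recursion, but with doubled variables: the substitutions $t\mapsto t/t'$, $u\mapsto u/u'$ are built into the recursive rule on the $c',d'$ part, and $t\mapsto tt'$, $u\mapsto uu'$ into the rule on the $c,d,e$ part. The prefactor $(t'u')^{\rk(\Gamma_x)}$ appearing in the definition of $h_B$ is absorbed automatically by the grading on $R_\Omega$: because $\ell^\Phi_{\Gamma_x}$ is homogeneous of the appropriate degree in $c',d'$, the substitution $t\mapsto t/t'$, $u\mapsto u/u'$ scales by exactly the right power of $t'u'$. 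With both maps in hand, the theorem follows by applying them to the explicit expansion
\[
\Omega_\phi \;=\; \sum_{x\in B}\,\ell^\Phi_{\Gamma_x}(c',d')\cdot \Phi_{[x,\hi]}(c,d,e).
\]
Under $L_\Omega$, term-by-term evaluation produces $\sum_x \ell^h_{\Gamma_x}(t,u)\cdot g([x,\hi];t,u)$, which is Stanley's defining formula for the local $h$-polynomial of $\phi$. Under $H'_\Omega$, the substitutions together with the grading prefactor reassemble exactly the defining sum of the mixed $h$-polynomial.

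The principal obstacle is the well-definedness of $L_\Omega$ and $H'_\Omega$ as maps out of $R_\Omega$: the recursion coming from (\ref{t:selfre}) must be checked to descend through the comodule relations, which is the analogue of the nontrivial coalgebra-compatibility step in Bayer--Ehrenborg and likely requires a careful induction on rank using the decomposition theorem (Theorem~\ref{thm:decomp}) as the base case. A secondary bookkeeping difficulty is verifying that the $(t'u')^{\rk(\Gamma_x)}$ factor really does emerge from the grading without needing to be inserted by hand; once this compatibility is established, matching the output to the formulas for $\ell^h$ and $h_B$ is formal.
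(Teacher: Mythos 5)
There is a genuine conceptual gap in the final ``matching'' step, which causes the argument to prove the wrong thing. You assert that term-by-term evaluation of $L_\Omega$ on the defining sum $\Omega_\phi=\sum_x \ell^\Phi_{\Gamma_x}(c',d')\cdot\Phi_{[x,\hi]}(c,d,e)$ produces $\sum_x \ell^h_{[\ho,x]}(\Gamma_x;t,u)\cdot g([x,\hi];t,u)$ and that this ``is Stanley's defining formula for the local $h$-polynomial.'' It is not: that sum is equation~\eqref{e:hdecomp}, the decomposition of the $h$-polynomial $h(\Gamma)$, whereas Stanley's local $h$-polynomial is defined (Definition~\ref{d:localpoly}) by the quite different sum $\ell^h_B(\Gamma;t,u)=\sum_x h(\Gamma_x;t,u)\cdot(-1)^{\rk([x,\hi])}g([x,\hi]^*;t,u)$, with the ordinary (not local) $h$-polynomials of the pieces $\Gamma_x$, dual posets, and alternating signs. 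So even if your evaluation were correct it would compute $h(\Gamma)$ rather than $\ell^h_B(\Gamma)$.

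Moreover the term-by-term evaluation itself cannot be made to work as stated: it requires a linear map on the $c',d'$-part sending $\ell^\Phi_{\Gamma_x}\mapsto \ell^h_{[\ho,x]}(\Gamma_x)$, but no such map exists because the local $cd$-index of the poset $\Gamma_x$ does not determine the local $h$-polynomial of the subdivision $\phi_x\colon\Gamma_x\to[\ho,x]$; the paper stresses this (see the discussion around Example~\ref{exam:threesubs} and the computation in Example~\ref{exam:additive}, which shows that $H_\Omega$ applied to a single summand $\ell^\Phi_{\Gamma_{x_0}}\cdot\Phi_{[x_0,\hi]}$ spreads out over many $x\geq x_0$ and is not of product form). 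The paper avoids this obstruction precisely by routing through the comodule property $\varrho(\Omega_\phi)=\sum_x\Omega_{\phi_x}\otimes\Phi_{[x,\hi]}$, where the left tensor factor is the \emph{full} mixed $cd$-index $\Omega_{\phi_x}$ of the restricted subdivision, not just $\ell^\Phi_{\Gamma_x}$. It first builds an auxiliary map $H_\Omega$ with $H_\Omega(\Omega_{\phi_x})=h(\Gamma_x)$, then sets $L_\Omega=\nabla\circ(H_\Omega\otimes G^*)\circ\varrho$ where $G^*(\Phi_{[x,\hi]})=(-1)^{\rk([x,\hi])}g([x,\hi]^*)$, so that the two factors reassemble exactly Stanley's definition; $H'_\Omega=\nabla\circ(L'_\Omega\otimes G')\circ\varrho$ is built the same way using the reweighted $L'_\Omega$ and $G'$. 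Your proposal is missing the intermediate map $H_\Omega$, the signed dual $g$-polynomial map $G^*$, and the crucial use of $\varrho$ to replace $\ell^\Phi_{\Gamma_x}$ by $\Omega_{\phi_x}$; without these the argument does not close.
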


Finally, we compute some examples of the mixed $cd$-index.
We discuss the different behaviors for the $h$-polynomial and the $cd$-index in Example~\ref{exam:additive}.
Then, we give a recursive formula of the local $cd$-index of the barycentric subdivision of a simplex and write down the mixed $cd$-indices for the barycentric subdivisions of the Boolean algebras $\cB_2$ and $\cB_3$.
The {\em cutting operation} is applied to compute the mixed $cd$-index of some polytopal subdivisions.

The paper is organized as follows. In Section~\ref{sec:posets}, we set the notation for posets and study near-Eulerian posets. Section~\ref{sec:cdindex} reviews the $cd$-index and gives the definition of the local $cd$-index for a near-Eulerian poset. In Section~\ref{sec:sfs}, we study properties of strong formal subdivisions of rank $0$, setting the stage for the proof of the decomposition theorem for the $cd$-index in Section~\ref{sec:decomp}.
The definition and structural property of the mixed $cd$-index are given in Section~\ref{sec:mixcd}. 
In Section~\ref{sec:hpolys}, we construct a linear map that takes the mixed $cd$-index to the mixed $h$-polynomial. 
In Section~\ref{sec:examples}, we compute the mixed $cd$-index of some examples. 

\medskip
\noindent
{\it Acknowledgements.} 
We would like to thank Kalle Karu, Satoshi Murai and Hailun Zheng for valuable conversations.
Sections~\ref{sec:cdindex}, \ref{sec:sfs} and \ref{sec:decomp} are from the first author's Master thesis \cite{dornian16}.
The second author was partially supported by NSF DMS 1748837. The third author was partially supported by the Croucher Foundation.

\section{Posets}\label{sec:posets}

In this section we give a brief introduction to posets. See \cite[Chapter~3]{stanley12} for more details.

Let $P$ be a poset. A \emph{chain} (alt. \emph{flag}) of $P$ is a totally ordered subset of $P$:
\[C = \{x_0 < x_1 < \dots < x_k\}.\]
For elements $s, t \in P$ such that $s<t$, we say that $t$ \emph{covers} $s$ if there exists no $u \in P$ such that $s < u < t$.
We say that a chain is \emph{maximal} if it is not properly contained in another chain.

\begin{defn}
	Given a poset $P$ and $s,t \in P$, we define the \emph{intervals} of $P$ as follows:
	{\renewcommand{\arraystretch}{1.4}
	\begin{center}
$\begin{tabu}{l c l }
[s,t] & := & \{x  \in P \, | \, s \leq x \leq t \}; 
\\\relax
[s,t) & := & \{x \in P \, | \, s \leq x < t \}; \\\relax
[s,\infty) & := & \{x\in P \, | \, s\leq x\}.
 \relax
\end{tabu}$
	\end{center}}
\end{defn}

We use $\widehat{0}$ and $\widehat{1}$ to denote the unique minimal element and the unique maximal element of $P$ if they exist.

Given a poset $P$ of rank $n$, we can \emph{adjoin} an element $x$ to $P$, by adding $x$ to the underlying set and providing relations for $x$. 
We write $I<x$ if we set $y<x$ whenever $y\in I$. 
If we are adjoining a maximal element $\hi$ of rank $n+1$, we do not have to provide relations, since it is understood that we set $P<\hi$. We write $\ov{P}$ for the poset $P\cup \{\hi\}$ for simplicity.

\begin{defn}
For a finite poset $P$, it is said to be \emph{graded} if every maximal chain has the same length. In this case we call the length of maximal chains the \emph{rank} of $P$, denoted by $\rk(P)$.
A \emph{ranked poset} is a pair $(P, \rho_P)$ such that $P$ is a graded poset, $\rho_P \colon P\to \mathbb{Z}$ is a function
and $\rho_P(y) - \rho_P(x) = \rk([x, y])$ for any $x\leq y$.
\end{defn}

We call $\rho_P$ the \emph{rank function} of $P$.
By abuse of notation we write $\rho$ when the poset is clear. 

If a graded poset contains $\ho$, it is naturally a ranked poset with the \emph{natural rank function}, given by sending an element $x$ to the rank of the interval $[\ho, x]$. In fact, every rank function of a graded poset is the sum of the natural rank function and an integer.

\begin{notn}
From now on every poset is graded with $\ho$.
Unless otherwise specified, we are using the natural rank function, i.e. $\rho(\ho) = 0$.
\end{notn}

If $P$ contains $\ho$, a subposet $I$ is said to be a \emph{lower order ideal} of $P$ if for any $x\in I$ we have $[\ho, x]\subset I$.
The lower order ideal generated by $S\subset P$ is defined to be the smallest lower order ideal in $P$ containing $S$. 
If the elements in $S\neq \emptyset$ are of the same rank, then the lower order ideal generated by $S$ is a graded poset with $\ho$.

\begin{defn}
	For a graded poset $P$ of rank $n$ with $\ho$, we define the \emph{boundary} of $P$ as the lower order ideal generated by the rank $n-1$ elements covered by exactly one element.
\end{defn}

In particular if $P$ has $\hi$, then every rank $n-1$ element is covered by exactly one element, namely $\hi$. Thus 
$\partial P = P\setminus \{\hi \}$ and $\ov{\partial P} = P = \partial \ov{P}$. For a polytope, the boundary of its face poset is the face poset of its boundary.

\begin{defn}
We say that a graded poset $P$ with $\ho$ and $\hi$ is \emph{Eulerian} if every interval of positive rank satisfies the Euler--Poincar\'{e} relation. That is, it has the same number of elements of each parity. 
We say that a  graded poset is \emph{locally Eulerian} if every interval is Eulerian.
We say that a graded poset is \emph{lower Eulerian} if it has $\ho$ and is locally Eulerian.
\end{defn}

Examples of Eulerian posets include face lattices of polytopes.

\begin{example} \label{exam:boolean}
	The subsets of $[n]:=\{1, \dots , n\}$ ordered by inclusion is known as the \emph{Boolean algebra} $\cB_n$. The natural rank function is given by taking the size of the subset.
	It is an Eulerian lattice of rank $n$. It is also the face lattice of the $(n-1)$-simplex.
\end{example}

\begin{defn}
For graded posets $P$ and $Q$ with $\ho$ and $\hi$, we define the \emph{join} of $P$ and $Q$ to be the poset $P * Q$ with underlying set $(P\setminus \{\hi_P\}) \cup (Q\setminus \{\ho_Q\})$ where we have the relation $x\leq y$ if one of the following conditions is satisfied:
\begin{enumerate}
\item $x\leq y$ in $P$;

\item $x\leq y$ in $Q$; or

\item $x\in P$ and $y\in Q$.
\end{enumerate}
\end{defn}

Observe that the join of Eulerian posets of positive rank is Eulerian.

\subsection{Near-Eulerian posets}

We discuss near-Eulerian posets, which were introduced by Stanley \cite{stanley94}.

\begin{defn}
		We say a rank $n$ poset $P$ is \emph{near-Eulerian} if there exists some rank $n+1$ Eulerian poset $Q$ such that $P = Q \setminus \{q, \hi\}$ for some $q$ of rank $n$.
\end{defn}

It can be seen that if such Eulerian poset $Q$ exists, it is necessarily unique.
We call $Q$ the \emph{associated Eulerian poset} of $P$.

\begin{prop}\label{p:nearEulboundisboundEul}
The boundary of a near-Eulerian poset is the boundary of an Eulerian poset.
\end{prop}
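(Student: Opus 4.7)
The plan is to trace $P$ back to its associated Eulerian poset $Q$ and exhibit an explicit Eulerian poset inside $Q$ whose boundary equals $\partial P$. Write $P = Q\setminus\{q,\hi_Q\}$ with $Q$ Eulerian of rank $n+1$ and $q$ a rank-$n$ element of $Q$ (necessarily a coatom).

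First I would pin down which rank $n-1$ elements of $P$ are covered by exactly one element of $P$. The key input is the diamond property of Eulerian posets: every interval of rank $2$ has exactly four elements, so applying this to $[x,\hi_Q]$ for $x$ of rank $n-1$ in $Q$ shows that $x$ is covered in $Q$ by exactly two rank-$n$ elements. Passing to $P$ destroys one of these covers precisely when $q$ is one of them, that is, precisely when $x<q$ in $Q$. Hence the rank $n-1$ elements of $P$ with a unique cover are exactly the rank $n-1$ elements of the half-open interval $[\ho,q)$.

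Next I would identify the lower order ideal they generate in $P$ with $[\ho,q)$ itself. The inclusion $\supseteq$ holds because $[\ho,q]$ is graded of rank $n$, so any element of $[\ho,q)$ extends up a chain to some rank $n-1$ element that is still strictly below $q$; the inclusion $\subseteq$ is immediate, since anything below a rank $n-1$ element of $[\ho,q)$ is itself strictly below $q$. Thus $\partial P = [\ho,q)$, and in particular $\partial P$ is realized as a subposet of $Q$ (equivalently of $P$, since neither $q$ nor $\hi_Q$ lies in $[\ho,q)$).

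Finally, since $[\ho,q]$ is an interval of the Eulerian poset $Q$ it is itself Eulerian, of rank $n$, and by the remark just before the proposition its boundary is $[\ho,q]\setminus\{q\} = [\ho,q) = \partial P$. Setting $R := [\ho,q]$ completes the proof. The argument is essentially formal once the diamond property is invoked; the only thing to watch is that the lower order ideal in the definition of $\partial P$ is computed in $P$, but for elements strictly below $q$ this agrees with the lower order ideal computed in $Q$, so no ambiguity arises.
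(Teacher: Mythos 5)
Your proof is correct and follows the same route as the paper: identify $\partial P$ with the half-open interval $[\ho,q)$ in the associated Eulerian poset $Q = P\cup\{q,\hi\}$, then note that $[\ho,q]$ is Eulerian as an interval of $Q$. The paper asserts $\partial P = [\ho,q)$ as a one-line observation; you supply the verification via the diamond property, but the underlying argument is the same.
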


\begin{proof}
This follows from the observation that for a near-Eulerian poset $P$, its boundary is given by $\partial P = [\ho, q)\subset Q$, where $Q = P \cup \{\hi, q\}$ is the associated Eulerian poset, and the fact that the interval $[\ho, q]$ is Eulerian.
\end{proof}

\begin{defn}
	Given a graded poset $P$ of rank $n$, define the \emph{semisuspension} of $P$ as $\ses P:= P\cup \{q\}$ where we adjoin a new element $q$ of rank $n$ and set $\partial P < q$.
\end{defn}

\begin{prop}\label{p:sesnear}
Let $P$ be a graded poset of rank $n$. Then $P$ is near-Eulerian if and only if the semisuspension $\ses P$ is the boundary of an Eulerian poset. 
\end{prop}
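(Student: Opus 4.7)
The proof plan is to show the two directions of the equivalence by explicitly identifying the semisuspension of $P$ with the boundary of the relevant Eulerian poset.

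\medskip\noindent\textbf{Forward direction.} Assume $P$ is near-Eulerian with associated Eulerian poset $Q$ of rank $n+1$, so $P = Q \setminus \{q, \hi\}$ for some rank $n$ element $q \in Q$. The goal is to show that $\ses P = \partial Q$. Since $Q$ is Eulerian, $\partial Q = Q \setminus \{\hi\} = P \cup \{q\}$, so as sets the two sides agree. It remains to check that the covering relations match, i.e.\ that in $Q$ one has $\{y \in Q : y < q\} = \partial P$. The key lemma here is that in any Eulerian poset, every rank $n-1$ element is covered by exactly two rank $n$ elements --- this follows from the Eulerian property applied to the rank $2$ interval $[y,\hi]$ (which must have equal numbers of even and odd rank elements, forcing exactly two atoms). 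Applying this in $Q$: a rank $n-1$ element $y \in P$ is covered in $Q$ by exactly two rank $n$ elements, which split into a copy of $q$ and a copy in $P$, or two copies in $P$. Thus $y$ is covered by exactly one element of $P$ precisely when $y < q$ in $Q$. Since $\partial P$ is the lower order ideal generated by such $y$, and since $[\hat 0, q)$ in $Q$ is a lower order ideal with the same set of maximal elements, we conclude $\partial P = [\hat 0, q)$. This exactly matches the covering relations imposed in the definition of $\ses P$, so $\ses P = \partial Q$.

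\medskip\noindent\textbf{Backward direction.} Suppose $\ses P = \partial R$ for some Eulerian poset $R$. Since $R$ is Eulerian it has $\hi$ and $\partial R = R \setminus \{\hi\}$, so $R = \ses P \cup \{\hi\} = P \cup \{q, \hi\}$ where $q$ is the rank $n$ element adjoined in the semisuspension. By construction, $\rk(R) = n+1$ and $P = R \setminus \{q, \hi\}$ with $q$ a rank $n$ element, which is exactly the definition of $P$ being near-Eulerian with associated Eulerian poset $R$.

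\medskip\noindent\textbf{Main obstacle.} The substantive content lies entirely in the forward direction, specifically in verifying that $\partial P$ (defined internally using the single-cover condition inside $P$) agrees with the interval $[\hat 0, q)$ inside the ambient Eulerian poset $Q$. The backward direction is essentially a tautology once one observes that $\partial R$ determines $R$ via adjoining $\hi$. One small bookkeeping point to be careful about is that the semisuspension only adjoins a rank $n$ element $q$ with $\partial P < q$ but does not specify anything above; this causes no issue because in both directions $q$ is a coatom of the Eulerian poset and so its only upper relation is to $\hi$.
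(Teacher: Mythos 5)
Your proof is correct and follows the same structure as the paper's: in both directions you identify $\ov{\ses P}$ with the Eulerian poset witnessing the near-Eulerian (resp.\ boundary) property. The only difference is that you supply a short argument, via the diamond property of Eulerian posets, for the identity $\partial P = [\ho, q)$ inside $Q$, a fact the paper states as an observation without proof (it also underlies Proposition~\ref{p:nearEulboundisboundEul}); your filling in of that step is accurate.
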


\begin{proof}
If $P$ is near-Eulerian, there exists some Eulerian poset $Q$ such that $P = Q \setminus \{q, \hi\}$ where $q$ is a rank $n$ element. Note that the half-open interval $[\ho, q)$ in $Q$ is the same as the boundary of $P$, thus taking semisuspension gives $\ses{P} = Q \setminus \{\hi\} = \partial Q$, which is the boundary of an Eulerian poset. 

On the other hand, suppose $\ses P$ is the boundary of some Eulerian poset $Q$. Then $Q:=\ov{\ses P}$ is Eulerian and if we remove the newly adjoined elements $q$ and $\hi$ from $Q$ we get back $P$, which means $P$ is near-Eulerian.
\end{proof}

It is necessary for a near-Eulerian poset to be lower Eulerian and have the same number of elements of each parity. However, there are posets satisfying those conditions that are not near-Eulerian.

We have the following characterization for near-Eulerian posets.

\begin{prop}[Near-Eulerian criterion for posets] \label{p:charnear} Let $P$ be a lower Eulerian poset of rank $n$.
	Then $P$ is near-Eulerian if and only if $n>0$ and the following conditions are satisfied for some graded lower order ideal $I$ of rank $n-1$:
	\begin{enumerate}
		\item For $x\in I$, we have 		
\begin{equation}\label{eq:c1}
\sum_{y\in I \cap [x, \infty)} (-1)^{\rho(y)} = (-1)^{n+1} \text{, and}
\end{equation}
\begin{equation}\label{eq:c2}
   \sum_{y\in [x, \infty)} (-1)^{\rho(y)} = 0;
\end{equation}

		\item For $x \notin I$, we have
		\begin{equation}\label{eq:c3}
		      \sum_{y\in [x, \infty)} (-1)^{\rho(y)} = (-1)^{n}.		
		\end{equation}
	\end{enumerate}
	Furthermore if the conditions for $I$ hold, then we have $\partial P = I$.
\end{prop}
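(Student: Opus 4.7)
The plan is to prove both directions by directly constructing or recognising the associated Eulerian poset $Q$ of rank $n+1$. For the forward direction, assume $P = Q \setminus \{q, \hi_Q\}$ is near-Eulerian. I would take $I := [\ho, q)_Q$, which is a graded lower order ideal of $P$ of rank $n-1$ and coincides with $\partial P$ by the argument in Proposition~\ref{p:nearEulboundisboundEul}. The three identities then follow from the vanishing of the Eulerian alternating sum on the intervals $[x, q]_Q$ and $[x, \hi_Q]_Q$, both of positive rank: after removing the contributions of $q$ (rank $n$) and $\hi_Q$ (rank $n+1$) which do not lie in the corresponding $[x, \infty)_P$, the signs $(-1)^n$ and $(-1)^{n+1}$ fall out to match (\ref{eq:c1}), (\ref{eq:c2}), and (\ref{eq:c3}).

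For the converse, given such an $I$, the strategy is to reverse-engineer $Q$. I would set $Q := P \cup \{q, \hi_Q\}$, declaring $y < q$ iff $y \in I$, and placing $\hi_Q$ above everything. The key step is to verify $Q$ is Eulerian on all positive-rank intervals by casework. Since $P$ forms a lower order ideal in $Q$ (nothing in $P$ sits above $q$ or $\hi_Q$), any interval $[x, y]_Q$ with $x, y \in P$ equals $[x, y]_P$ and is Eulerian by hypothesis. The remaining cases are: $[x, q]_Q = (I \cap [x, \infty)_P) \cup \{q\}$ for $x \in I$, handled by (\ref{eq:c1}); $[x, \hi_Q]_Q$ for $x \in I$, handled by (\ref{eq:c2}); $[x, \hi_Q]_Q$ for $x \in P \setminus I$, handled by (\ref{eq:c3}); and the trivial rank-$1$ interval $[q, \hi_Q]_Q$. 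In each case the alternating sum collapses to zero. Once $Q$ is shown Eulerian, $P$ is near-Eulerian by definition.

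To conclude $\partial P = I$, note that the rank-$2$ Eulerian interval $[x, \hi_Q]_Q$ for any rank $n-1$ element $x$ of $Q$ must contain exactly two rank-$n$ elements, so every such $x$ has exactly two covers in $Q$. Removing $q$ shows that the rank $n-1$ elements of $P$ covered by exactly one element in $P$ are precisely the rank $n-1$ elements of $I$, and since $I$ is a graded lower order ideal of rank $n-1$ it is generated by these. The requirement $n > 0$ enters through the existence of $I$ in the backward direction and through the existence of a removable rank-$n$ element $q \neq \ho_Q$ in the forward direction. The main obstacle is the bookkeeping in the Eulerian verification: one must confirm $P$ really is a lower order ideal inside $Q$ and match each hypothesised identity to the correct alternating-sum case, but once the cases are laid out the computation is routine.
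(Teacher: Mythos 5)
Your proof is correct and follows essentially the same strategy as the paper's: the forward direction extracts conditions \eqref{eq:c1}--\eqref{eq:c3} from the Euler--Poincar\'e relation on the intervals $[x,q]_Q$ and $[x,\hi_Q]_Q$ in the associated Eulerian poset, while the converse builds $Q := P \cup \{q,\hi\}$ with $I < q$ and verifies it is Eulerian by reducing the only new intervals to the hypothesised alternating-sum identities. The one place you do a bit more work than the paper is the final claim $\partial P = I$: the paper simply observes that $[\ho,q)_Q = \partial P$ (as in Proposition~\ref{p:nearEulboundisboundEul}), whereas you re-derive this by counting covers via rank-$2$ Eulerian intervals, which is correct but a small detour.
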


\begin{proof}
	Suppose $P$ is near-Eulerian. 
	Let $Q$ be the associated Eulerian poset of $P$ such that $Q \setminus \{q, \hi\} = P$. Let $I = \partial P$. Clearly $I$ is a graded lower order ideal of rank $n-1$.
For $x\in \partial P$, the intervals $[x, q]$ and $[x, \hi]$ are Eulerian, hence we have
\[		
\sum_{{y\in [x, q)}} (-1)^{\rho(y)} + (-1)^{\rho(q)}= 0
		\text{ and }
\sum_{\substack{y\in [x, \hi]\\ y \neq q, \hi}} (-1)^{\rho(y)} + (-1)^{\rho(q)} + (-1)^{\rho(\hi)} = 0,
\]
which are just Condition~\eqref{eq:c1} and Condition~\eqref{eq:c2}, since $[x, q)$ in $Q$ is the same as $I \cap [x, \infty)$ in $P$, and $[x, \hi]\setminus \{q, \hi\}$ in $Q$ is the same as 
$[x, \infty)$ in $P$.
Similarly for $x\notin \partial P$, by considering the Eulerian poset $[x, \hi]$ we have
\[\sum_{y\in [x, \hi)} (-1)^{\rho(y)} +  (-1)^{\rho(\hi)} = 0,
\]
which gives Condition~\eqref{eq:c3}.

\medskip

	Now suppose the conditions are satisfied for some graded lower order ideal $I$ of rank $n-1$
	 We define $Q$ by adding $q$ of rank $n$ and $\hi$ to $P$ and setting $I<q$. Since $I$ is graded of rank $n-1$, the resulting poset $Q$ is graded of rank $n+1$.
It suffices to show that $Q$ is Eulerian. Since $P$ is lower Eulerian, we need only show that intervals of the form $[x, \hi]$ and $[x, q]$ have the same number of elements of each parity. 
But as we have seen they follow from Condition~\eqref{eq:c1}, Condition~\eqref{eq:c2} and Condition~\eqref{eq:c3}. Therefore $Q$ is Eulerian and $P$ is near-Eulerian. Since $[\ho, q)$ in $Q$ is given by $\partial P$, we have that $I = \partial P$.
\end{proof}

\begin{corollary}\label{p:smallernear}
Suppose $P$ is a near-Eulerian poset. Then the subposet $[x, \infty)$ is near-Eulerian if $x\in \partial P$, and is the boundary of an Eulerian poset if $x\notin \partial P$.
\end{corollary}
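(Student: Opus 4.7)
The plan is to unwind the definition of near-Eulerian and reduce both cases to the Eulerian poset $Q$ that witnesses $P$ as near-Eulerian. Write $Q$ for the associated Eulerian poset, so $Q$ has rank $n+1$, $q \in Q$ has rank $n$, and $P = Q \setminus \{q,\hi\}$. From the proof of Proposition~\ref{p:charnear} we know $\partial P = [\ho, q)_Q$, so the condition ``$x \in \partial P$'' is equivalent to ``$x < q$ in $Q$.'' Also, for any $x \in P$, the set $[x,\infty)_P = \{y \in P : y \geq x\}$ differs from the interval $[x,\hi]_Q$ only in whether the elements $q$ and $\hi$ get included.

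For the first case, suppose $x \in \partial P$. Then $x < q < \hi$ in $Q$, so both $q$ and $\hi$ lie in the Eulerian interval $[x,\hi]_Q$, and we have
\[[x,\hi]_Q \setminus \{q,\hi\} = [x,\infty)_P.\]
The ranks of $q$ and $\hi$ inside $[x,\hi]_Q$ are $n - \rho(x)$ and $n+1 - \rho(x) = \rk([x,\hi]_Q)$ respectively, so $q$ has rank exactly one less than the top of this Eulerian poset. By definition, $[x,\hi]_Q \setminus \{q,\hi\}$ is near-Eulerian with associated Eulerian poset $[x,\hi]_Q$, proving that $[x,\infty)_P$ is near-Eulerian.

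For the second case, suppose $x \notin \partial P$. Since the elements of $Q$ strictly below $q$ are exactly $\partial P$, we have $x \not< q$, and since $x \in P$ and $q \notin P$ we have $x \neq q$, so $x \not\leq q$. Therefore $q \notin [x,\hi]_Q$, and
\[[x,\hi]_Q = [x,\infty)_P \cup \{\hi\}.\]
Applying the observation noted just after Definition of $\partial P$ in the excerpt — that a graded poset with $\hi$ has boundary equal to itself minus $\hi$ — we obtain $[x,\infty)_P = \partial\bigl([x,\hi]_Q\bigr)$, which exhibits $[x,\infty)_P$ as the boundary of the Eulerian poset $[x,\hi]_Q$.

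The main potential pitfall is bookkeeping with the two posets $P$ and $Q$: one must keep track of the fact that intervals $[x,\infty)_P$ and $[x,\hi]_Q$ use different ambient posets, and verify that $q$ is covered by $\hi$ in $[x,\hi]_Q$ so that the near-Eulerian definition applies verbatim in the first case. Beyond this, no estimates or clever constructions are required; the result is essentially a direct consequence of the definitions together with Proposition~\ref{p:charnear}.
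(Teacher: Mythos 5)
Your proof is correct. Since the paper states this as a \emph{corollary} of Proposition~\ref{p:charnear} without giving an explicit argument, the intended route is presumably to verify the criterion's three conditions \eqref{eq:c1}--\eqref{eq:c3} for the poset $[x,\infty)$ with lower order ideal $I' := \partial P \cap [x,\infty)$: after replacing the rank function by $\rho - \rho(x)$, each condition for an element $y \in [x,\infty)$ is literally the corresponding condition for $P$ at $y$ (one only needs the small extra observation that $I'$ is graded of rank $n-1-\rho(x)$, which follows from $I$ being a graded lower order ideal). You instead bypass the criterion entirely and argue directly from the definition of near-Eulerian, using $\partial P = [\ho,q)_Q$ to identify $[x,\infty)_P$ as $[x,\hi]_Q \setminus \{q,\hi\}$ when $x < q$ and as $[x,\hi]_Q \setminus \{\hi\}$ when $x \not\leq q$. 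This is more concrete --- it explicitly exhibits the associated Eulerian poset $[x,\hi]_Q$ rather than deducing existence from the abstract criterion --- and is entirely in the spirit of the forward direction of the paper's proof of Proposition~\ref{p:charnear}, which works with exactly these intervals $[x,q]$ and $[x,\hi]$. Your bookkeeping (that $q$ has corank $1$ in $[x,\hi]_Q$, that $x\neq q$ since $q\notin P$, and that $\partial Q' = Q'\setminus\{\hi\}$ for a graded poset $Q'$ with $\hi$) is all correct; the proof is complete.
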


We introduce an important class of near-Eulerian posets.

\begin{prop}
	Eulerian posets of positive rank are near-Eulerian.
\end{prop}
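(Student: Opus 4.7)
The plan is to apply the near-Eulerian criterion (Proposition~\ref{p:charnear}) with a very natural choice of graded lower order ideal. Let $P$ be an Eulerian poset of rank $n > 0$. Because $P$ is Eulerian, every rank $n-1$ element is covered only by $\hi_P$, so the boundary $I := \partial P = P \setminus \{\hi_P\}$ is a graded lower order ideal of rank $n-1$. My approach is to verify each of Conditions~\eqref{eq:c1}, \eqref{eq:c2}, \eqref{eq:c3} of Proposition~\ref{p:charnear} for this choice of $I$, which will immediately give near-Eulerianness. (Alternatively, one could adjoin a new rank-$n$ element $q$ with $\partial P < q$ and a new $\hi$ on top, verify Eulerianness directly via Proposition~\ref{p:sesnear}; the two routes are essentially equivalent.)

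For $x \in I$, the interval $[x,\hi_P]$ in $P$ has positive rank and is Eulerian, so
\[
    \sum_{y \in [x,\hi_P]} (-1)^{\rho(y)} = 0.
\]
Since $[x,\infty) = [x,\hi_P]$ in $P$, this gives Condition~\eqref{eq:c2}. Splitting off the top term $(-1)^n$ of the above sum yields $\sum_{y \in [x,\hi_P)}(-1)^{\rho(y)} = (-1)^{n+1}$, and since $I \cap [x,\infty) = [x,\hi_P)$, this is exactly Condition~\eqref{eq:c1}.

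The only element of $P$ not in $I$ is $\hi_P$ itself, for which $[x,\infty) = \{\hi_P\}$ and $\rho(\hi_P) = n$, so $\sum_{y \in [x,\infty)}(-1)^{\rho(y)} = (-1)^n$, giving Condition~\eqref{eq:c3}. By Proposition~\ref{p:charnear}, $P$ is near-Eulerian, with $\partial P = I$ as we already observed. There is no real obstacle here: the only subtle point is the observation that in an Eulerian poset every rank $n-1$ element is covered only by $\hi_P$, which is what ensures $I = P \setminus \{\hi_P\}$ has the correct rank $n-1$ and is graded.
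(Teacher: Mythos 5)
Your proof is correct, but it takes a different route from the paper's. You verify the three conditions of the near-Eulerian criterion (Proposition~\ref{p:charnear}) directly, with $I = \partial P = P \setminus \{\hi_P\}$, using Eulerianness of the intervals $[x,\hi_P]$. The paper instead observes that $\ov{\ses{P}}$ is the join $P * \cB_2$, invokes the earlier remark that the join of Eulerian posets of positive rank is Eulerian, and then applies Proposition~\ref{p:sesnear}. Your argument is more computational but entirely self-contained modulo the criterion; the paper's is shorter because it reuses the join observation and the semisuspension characterization. One small point: the observation you flag as ``the only subtle point'' --- that in an Eulerian poset every rank $n-1$ element is covered only by $\hi_P$, hence $\partial P = P \setminus \{\hi_P\}$ --- is actually stated explicitly in the paper right after the definition of the boundary, so there is nothing subtle to worry about; both ingredients you rely on (that fact and Proposition~\ref{p:charnear}) are already available.
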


\begin{proof}
Let $P$ be an Eulerian poset of rank $n$. 
The poset $Q:= \ov{\ses{P}}$ is the join of the Eulerian posets $P$ and $\cB_2$, hence $Q$ is also Eulerian.
By Proposition~\ref{p:sesnear} we have that $P$ is near-Eulerian.
\end{proof}

A natural example of a near-Eulerian poset is the face poset of the boundary of a polytope with exactly one facet removed. We may think of the semisuspension as ``capping'' the polytope off with the missing facet.

\section{The \texorpdfstring{$cd$}{cd}-index and the local \texorpdfstring{$cd$}{cd}-index}\label{sec:cdindex}

\begin{notn}
For graded posets with $\ho$, we modify the definition of chain to mandate that every chain contains $\ho$.
\end{notn}

For a chain $C = \{ \ho = x_0 < x_1 < \dots < x_k \}$, the \emph{rank set} $\rho(C)$ is defined to be $\{\rho(x_1), \dots, \rho(x_k)\}\subseteq [n]= \{1, 2, \dots, n\}$. 
Note that we do not include $\rho(\ho)$ in $\rho(C)$, since the presence of $\ho$ is assumed.

\begin{defn}
	The \emph{flag $f$-vector} of $P$ is defined to be the function  $\alpha_P\colon  2^{[n]} \rightarrow \mathbb{Z}_{\geq 0}$
	where $\alpha_P (S)$ is the number of chains containing $\ho$ with rank set $S$.
	The \emph{flag $h$-vector} of $P$ is defined to be the function $\beta_P\colon  2^{[n]} \rightarrow \mathbb{Z}$ where
	\[\beta_P(S) = \sum_{T\subseteq S} (-1)^{\#(S \setminus T)} \alpha_P(T).\]
\end{defn}

By inclusion-exclusion we have $\alpha(S) = \sum_{T \subseteq S} \beta(T).$

Let $R_{\Psi_{}} := \bK \langle a, b \rangle$ be the polynomial ring generated by non-commutative variables $a$ and $b$, where $\bK$ is a field of characteristic $0$.
For $S\subset [n]$, we define the characteristic monomial $u_S = u_1 u_2 \cdots u_n \in R_{\Psi_{}}$ by letting
\[u_i = \begin{cases}
		a & \text{if $i \notin S$} \\
		b & \text{if $i \in S$}
	\end{cases}\]
For example, if $n = 5$ and $S = \{1, 2, 5\}$ we have $u_S = bbaab$.

\begin{defn}
Let $P$ be a graded poset of rank $n$ with $\ho$.
The \emph{$ab$-index} of $P$ is the element in $R_{\Psi_{}}$ defined by
	\[\Psi_{P} (a,b) = \sum_{S \subseteq [n]} \beta_P (S) u_S \, , \]
	and the \emph{flag enumerator} of $P$ is the element in $R_{\Psi_{}}$ defined by
	\[\Upsilon_P (a, b) = \sum_{S \subseteq [n]} \alpha_P (S) u_S. \]
\end{defn}

Alternatively the flag enumerator can be given as the sum of the characteristic monomials of all the chains in $P$. Since $P$ is graded, both the $ab$-index and the flag enumerator are homogeneous of degree $n$.

It is easily seen that the flag enumerator and the $ab$-index are equivalent by a linear change of variables \cite{stanley94}:
\begin{align*}
	\Upsilon_P(a,b) & = \Psi_P(a + b, b),   \\
	\Psi_P(a, b)    & =\Upsilon_P(a - b,b).
\end{align*}

Note that our definition of flag enumerator would count $\hi$ if the poset contains the maximal element.
That is, if $P$ contains $\hi$ then $\Psi_{P} = \Psi_{\partial P} \cdot a$.
This convention is slightly different from the one in \cite{stanley12}.

Bayer and Billera \cite{bayer85} showed that the flag $f$-vector satisfies the generalized Dehn--Sommerville relations for Eulerian posets. Fine observed that this is equivalent to the following additional constraints on the $ab$-index \cite{bayer91}.

\begin{theorem}\label{t:firstcd} Let $P$ be an Eulerian poset. Then there exists a polynomial in non-commuting variables $c$ and $d$ denoted $\Phi_{\partial P}(c,d)$ that satisfies $\Phi_{\partial P}(a + b, ab + ba) = \Psi_{\partial P}(a,b)$.
\end{theorem}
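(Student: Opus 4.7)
The plan is to exploit the generalized Dehn--Sommerville relations satisfied by the flag $f$-vector of an Eulerian poset (due to Bayer--Billera \cite{bayer85}), combined with a structural characterization of the subalgebra $\iota(\bK\langle c, d\rangle) \subseteq R_{\Psi_{}}$, where $\iota$ denotes the substitution $c \mapsto a+b$, $d \mapsto ab+ba$. The existence of $\Phi_{\partial P}$ amounts to showing that $\Psi_{\partial P}$ lies in this image; uniqueness amounts to $\iota$ being injective.

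First, I would translate the Bayer--Billera relations, which are linear identities among the values $\alpha_P(S)$ obtained by applying the Euler--Poincar\'e relation to each interval $[x_i, x_{i+1}]$ of a flag $\ho = x_0 < x_1 < \cdots < x_k$, into equivalent relations on $\beta_P$ via the inclusion--exclusion formula $\beta_P(S) = \sum_{T\subseteq S}(-1)^{|S\setminus T|}\alpha_P(T)$. These take the schematic form
\begin{equation*}
\sum_{k = i+1}^{j-1} (-1)^{k-i-1} \beta_P(S \cup \{k\}) = \bigl(1 - (-1)^{j-i-1}\bigr)\beta_P(S)
\end{equation*}
for every $S \subseteq [n-1]$ and every pair $i < j$ in $S \cup \{0, n\}$ with $(i,j) \cap S = \emptyset$.

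Second, I would show that a homogeneous element $f = \sum_S \beta(S) u_S$ of $R_{\Psi_{}}$ of degree $n$ lies in $\iota(\bK\langle c, d\rangle)$ if and only if its coefficients satisfy precisely those relations. The forward direction is a direct expansion: expanding a single $cd$-monomial into $a,b$-monomials produces coefficients which exhibit exactly the required symmetry across each ``gap'' in $S$. The reverse direction follows from a dimension count: $cd$-monomials of degree $n$ correspond bijectively to compositions of $n$ into parts of size $1$ (each $c$) and $2$ (each $d$), so their number is the Fibonacci number $F_{n+1}$; this matches the dimension of the solution space of the Dehn--Sommerville relations, parameterized by the values of $\beta$ on ``sparse'' subsets $S \subseteq [n-1]$ (those with no two consecutive integers).

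Combining these two ingredients, $\Psi_{\partial P}(a,b)$ lies in the image of $\iota$, and the Fibonacci dimension count also shows $\iota$ is injective, so there is a unique $\Phi_{\partial P}(c,d)$ with $\iota(\Phi_{\partial P}) = \Psi_{\partial P}$. The main obstacle is the reverse implication in step two: verifying that the Dehn--Sommerville relations alone cut $R_{\Psi_{}}$ down to a subspace of the expected Fibonacci dimension. I would handle this by induction on $n$, expressing each $\beta(S')$ with $S'$ non-sparse as a $\bK$-linear combination of $\beta(S)$ with $S$ sparse by successively invoking the relation associated to the smallest gap of $S'$ to eliminate one element at a time, and checking linear independence of the resulting parameters.
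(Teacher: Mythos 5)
The paper does not prove Theorem~\ref{t:firstcd}; it cites it as the classical existence result for the $cd$-index, recorded in Bayer--Klapper~\cite{bayer91} (Fine's observation) building on the generalized Dehn--Sommerville relations of Bayer--Billera~\cite{bayer85}. There is therefore no in-paper proof to compare against, and your proposal should be judged as an attempt to reconstruct the standard argument, which it essentially does: verify the $cd$-monomial expansions lie in the Dehn--Sommerville solution space, compute both dimensions as the Fibonacci number, and conclude by a dimension count. That is the right strategy.

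A few places need care, though. First, the schematic relation you write,
\begin{equation*}
\sum_{k = i+1}^{j-1} (-1)^{k-i-1} \beta_P(S \cup \{k\}) = \bigl(1 - (-1)^{j-i-1}\bigr)\beta_P(S),
\end{equation*}
is the Bayer--Billera relation for the flag $f$-vector $\alpha_P$, not for $\beta_P$. Substituting $\alpha(T)=\sum_{U\subseteq T}\beta(U)$ and simplifying produces a different relation involving all $\beta(U\cup\{k\})$ for $U\subseteq S$, not just $U=S$; this matters if you intend to literally verify that expanded $cd$-monomials satisfy the translated relations, so you should carry out the change of variables honestly or just work throughout with $\alpha$ and the flag enumerator $\Upsilon$ rather than $\beta$ and $\Psi$. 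Second, the indexing in the dimension count is slightly muddled: a degree-$n$ element of $R_\Psi$ has coefficients indexed by $S\subseteq[n]$, so writing ``sparse subsets $S\subseteq[n-1]$'' needs to be glossed (e.g., as sparse subsets of $[n]$ with $n\notin S$) to make the Fibonacci count $F_{n+1}$ land where you want it. Third, the step you correctly flag as the obstacle---that the Dehn--Sommerville relations cut the $2^n$-dimensional space down to exactly $F_{n+1}$ dimensions---does require a genuine argument; your proposed induction on gaps can work, but the elimination order has to be chosen carefully so that each non-sparse $\beta(S')$ is expressed in terms of $\beta$ on strictly ``smaller'' sets, and you must also verify linear independence of the $cd$-expansions to get $\iota$ injective (for instance via the leading $ab$-monomial of each $cd$-monomial). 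None of these issues is fatal, but as written the sketch glosses over the hardest bookkeeping, which is precisely where Bayer--Klapper spend their effort.
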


Thus with $c=a+b$ and $d=ab+ba$, we let $R_\Phi :=\bK \langle c, d\rangle \subset R_{\Psi_{}}$ be a subring of $R_{\Psi_{}}$. 
We call the polynomial $\Phi_{\partial P}(c, d)\in R_\Phi$ the \emph{$cd$-index} of $\partial P$. 
An immediate consequence of the theorem is that the $ab$-index of $\partial P$ is symmetric under switching $a$ and $b$.
The symmetry of the $ab$-index is analogous to the symmetry of the $h$-vector of simplicial polytopes and spheres.

\subsection{The local \texorpdfstring{$cd$}{cd}-index}

We may decompose the $ab$-index of a near-Eulerian poset $P$ into a term that is expressible in $c$ and $d$, and a remainder term.
This result is described in \cite{karu06, ehrenborg07}. 

\begin{lemma}[Local $cd$-index]\label{l:localcd}
	Given a near-Eulerian poset $P$ with boundary $\partial P$, we may write its $ab$-index as

	\[\Psi_{P} = \ell^\Psi_{P}\ + \Psi_{\partial P} \cdot a \, , \]
	where $\ell^\Psi_{P} := \Psi_{\ses P} - \Psi_{\partial P} \cdot (a + b)$. Furthermore $\ell^\Psi_{P}$ is $cd$-expressible.
\end{lemma}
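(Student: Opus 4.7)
The plan is to prove the decomposition identity by a direct chain count, and then to read off $cd$-expressibility from the earlier structural results.

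First I would establish the identity at the level of flag enumerators. Since $\ses P = P \cup \{q\}$ with $q$ of rank $n$ adjoined above $\partial P$, every chain of $\ses P$ either lies entirely in $P$ or contains $q$. In the latter case $q$ must be the terminal element of the chain (since $q$ has the top rank in $\ses P$ and is not below any other element), and deleting $q$ gives a chain in $\partial P$; conversely, appending $q$ to any chain of $\partial P$ yields a chain of $\ses P$ containing $q$. A chain in $\partial P$ with rank set $S \subseteq [n-1]$ and characteristic monomial $u_S$ of degree $n-1$ produces, after appending $q$, a chain with rank set $S \cup \{n\}$ and monomial $u_S \cdot b$ of degree $n$. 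Summing gives
\[
\Upsilon_{\ses P}(a,b) = \Upsilon_P(a,b) + \Upsilon_{\partial P}(a,b)\cdot b.
\]

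Next I would convert to the $ab$-index. Applying the linear change of variables $a \mapsto a-b$ (which commutes with right multiplication by $b$) to both sides yields
\[
\Psi_{\ses P}(a,b) = \Psi_P(a,b) + \Psi_{\partial P}(a,b)\cdot b.
\]
Rearranging and using $c = a + b$,
\[
\Psi_P = \Psi_{\ses P} - \Psi_{\partial P}\cdot b = \bigl(\Psi_{\ses P} - \Psi_{\partial P}\cdot c\bigr) + \Psi_{\partial P}\cdot a = \ell^\Psi_P + \Psi_{\partial P}\cdot a,
\]
which is the claimed decomposition.

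For the second assertion, I would invoke the structural results already in hand. By Proposition~\ref{p:sesnear}, $\ses P$ is the boundary of an Eulerian poset, so $\Psi_{\ses P}$ is $cd$-expressible by Theorem~\ref{t:firstcd}. By Proposition~\ref{p:nearEulboundisboundEul}, $\partial P$ is also the boundary of an Eulerian poset, so $\Psi_{\partial P}$ is $cd$-expressible. Therefore $\ell^\Psi_P = \Psi_{\ses P} - \Psi_{\partial P}\cdot c$ lies in $R_\Phi = \bK\langle c,d\rangle$, since $R_\Phi$ is closed under right multiplication by $c$ and under subtraction. The main step is really the chain-counting identity; the $cd$-expressibility is then a formal consequence of the earlier propositions identifying $\ses P$ and $\partial P$ as boundaries of Eulerian posets, and there is no genuine obstacle here.
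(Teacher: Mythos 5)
Your proposal is correct and follows essentially the same argument as the paper: you decompose chains of $\ses P$ into those lying in $P$ and those ending in $q$, obtain the flag-enumerator identity $\Upsilon_{\ses P} = \Upsilon_P + \Upsilon_{\partial P}\cdot b$, substitute $a \mapsto a-b$, and invoke Propositions~\ref{p:sesnear} and~\ref{p:nearEulboundisboundEul} together with Theorem~\ref{t:firstcd} for $cd$-expressibility. The only differences are cosmetic — you spell out the degree bookkeeping and the bijection between chains through $q$ and chains of $\partial P$ a bit more explicitly — but the structure and substance are identical to the paper's proof.
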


Note that if a polynomial in variables $a$ and $b$ can be expressed as $f(c, d) + g(c,d )\cdot a$ then such $f$ and $g$ are unique. 

\begin{proof}
	The semisuspension $\ses P:= P\cup \{q\}$ contains chains of exactly two types: those contained in $P$, and chains in $\partial P$ followed by $q$. 
Rewriting the flag enumerator of $\ses P$, we have
\[
\Upsilon_{	P} = \Upsilon_{\ses{P}} - \Upsilon_{\partial P} \cdot b.
\]
	By substituting $a - b$ for $a$, we have
	\begin{align*}
		\Psi_{P} & = \Psi_{ \ses P} - \Psi_{\partial P} \cdot b                                     \\
		         & = (\Psi_{ \ses P} - \Psi_{\partial P} \cdot (a + b)) + \Psi_{\partial P} \cdot a \\
		         & = \ell^\Psi_{P} +  \Psi_{\partial P} \cdot a.
	\end{align*}
	Since both $\ses P$ and $\partial P$ are the boundaries of Eulerian posets, the polynomial $\ell^\Psi_{P}$ is $cd$-expressible and we have
	\[\ell^\Phi_{P} = \Phi_{\ses{P}} - \Phi_{\partial P} \cdot c. \qedhere
	\]
\end{proof}

By convention we set the local $cd$-index of the single element poset $\{\widehat{0} \}$ as $1$. 

We call $\ell^\Psi_{P}(a,b)$ and $\ell^\Phi_{P}(c,d)$ the \emph{local $ab$-index} and \emph{local $cd$-index} of $P$ respectively. 
We also define the \emph{local flag enumerator} as $\ell^\Upsilon_P := 
\ell^\Psi_{P} (a + b, b)$ .

We have defined $cd$-index for boundaries of Eulerian posets. We can also define $cd$-index for near-Eulerian posets {\cite[Lemma~3.1]{karu06}}.

\begin{defn}\label{def:localcdfornear}
	Let $P$ be a near-Eulerian poset of rank $n$. Then we define the \emph{$cd$-index} of $P$ to be
	\[\Phi_{P} := \ell^\Phi_{P} + \Phi_{\partial P}.\]
\end{defn}

Note that this is a non-homogeneous polynomial. The polynomial $\ell^\Phi_{P}$ has degree $n$
while the polynomial $\Phi_{\partial P}$ has degree $n-1$, where $\deg(c)=1$ and $\deg(d)=2$.

The local $cd$-index measures how different a near-Eulerian poset $P$ is from the Eulerian poset $\ov{\partial P}$. Thus for Eulerian posets we have the following proposition.

\begin{prop} \label{p:localcdvanishing}
	If $P$ is Eulerian of positive rank, then the local $cd$-index $\ell^\Phi_{P} = 0$.
\end{prop}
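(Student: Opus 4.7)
The plan is to show directly that the local $ab$-index $\ell^\Psi_P$ vanishes, from which $\ell^\Phi_P = 0$ follows by the uniqueness of the $cd$-expression in Lemma~\ref{l:localcd}. By that lemma, $\ell^\Psi_P = \Psi_{\ses P} - \Psi_{\partial P}\cdot(a+b)$, so the task reduces to establishing the identity $\Psi_{\ses P} = \Psi_{\partial P}\cdot c$. To prove this, I would first describe $\ses P$ explicitly: since $P$ is Eulerian with unique maximum $\hi$, we have $\partial P = P\setminus\{\hi\}$, and $\ses P$ is obtained from $\partial P$ by adjoining two incomparable rank-$n$ elements $\hi$ and $q$, each of which lies above every element of $\partial P$. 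Consequently every chain of $\ses P$ either lies in $\partial P$, or is obtained by appending exactly one of $\hi$ or $q$ to a chain of $\partial P$.

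I would then translate this into the flag enumerator. For any rank set $T\subseteq[n-1]$, the chains of $\ses P$ with rank set $T$ are exactly those of $\partial P$, while the chains with rank set $T\cup\{n\}$ come in two families (ending at $\hi$ or at $q$), contributing $2\alpha_{\partial P}(T)$ chains in total. Grouping by $T$ and factoring the shared degree-$(n-1)$ characteristic monomial yields
\[
\Upsilon_{\ses P}(a,b) \;=\; \Upsilon_{\partial P}(a,b)\cdot(a+2b).
\]
Applying the substitution $a\mapsto a-b$ that converts $\Upsilon$ into $\Psi$ sends $a+2b$ to $(a-b)+2b = a+b = c$, giving $\Psi_{\ses P} = \Psi_{\partial P}\cdot c$. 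Therefore $\ell^\Psi_P = 0$ and $\ell^\Phi_P = 0$ follows.

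The whole argument is a short direct calculation. The Eulerian hypothesis is essential at exactly one step---the symmetry between $\hi$ and $q$ in the chain count uses that $\hi$ is the only rank-$n$ element of $P$---and I do not anticipate any substantial obstacle. In particular, this proof avoids any appeal to deeper structural facts such as the join identity $\ov{\ses P} = P * \cB_2$ or a multiplicativity property of $\Phi$ under joins.
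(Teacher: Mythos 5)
Your proof is correct and is essentially the same as the paper's. The paper's one-line proof cites the earlier remark that $\Psi_P = \Psi_{\partial P}\cdot a$ whenever $P$ has a $\hi$, and concludes $\ell^\Psi_P = 0$ from the decomposition $\Psi_P = \ell^\Psi_P + \Psi_{\partial P}\cdot a$ in Lemma~\ref{l:localcd}; your chain count on $\ses P$ yielding $\Upsilon_{\ses P} = \Upsilon_{\partial P}\cdot(a+2b)$ and hence $\Psi_{\ses P} = \Psi_{\partial P}\cdot c$ is the same computation in equivalent form, since $\Upsilon_{\ses P} = \Upsilon_P + \Upsilon_{\partial P}\cdot b$ (from the proof of Lemma~\ref{l:localcd}) translates $\Psi_P = \Psi_{\partial P}\cdot a$ directly into $\Psi_{\ses P} = \Psi_{\partial P}\cdot c$.
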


\begin{proof}
	This follows from the observation $\Psi_P = \Psi_{\partial P} \cdot a$.
\end{proof}

Note that for an Eulerian poset $P$ of positive rank, because of $\Psi_P = \Psi_{\partial P} \cdot a$, we have $\Phi_{P} = \Phi_{\partial P}$.

\begin{remark}
We have defined the $cd$-index for two classes of posets, namely 
the class of the boundaries of Eulerian posets and the class of near-Eulerian posets. The $cd$-index is well-defined, since no poset is both the boundary of an Eulerian poset and a near-Eulerian poset. Even though the singleton poset is the boundary of an Eulerian poset and is Eulerian, it is not a near-Eulerian poset.
Note that by Proposition~\ref{p:nearEulboundisboundEul}, the $cd$-index is also defined for the boundary of a near-Eulerian poset.
\end{remark}

\section{Strong formal subdivisions}\label{sec:sfs}

A function $\phi\colon \Gamma \rightarrow B$ is said to be \emph{order-preserving} if $y_1\leq y_2$ in $\Gamma$ implies $\phi(y_1)\leq \phi(y_2)$.
If both $\Gamma$ and $B$ are graded posets with rank functions $\rho_\Gamma$ and $\rho_B$, a function $\phi\colon \Gamma \rightarrow B$ is said to be \emph{rank-increasing} if $\rho_\Gamma(y)\leq \rho_B(\phi(y))$ for all $y\in \Gamma$.

If both $\Gamma$ and $B$ are graded with $\ho$ and 
$\phi\colon \Gamma \rightarrow B$ is order-preserving, rank-increasing and surjective, then we have $\phi(\ho_\Gamma) = \ho_B$ and $\rho_\Gamma(\ho_\Gamma) \leq \rho_B(\ho_B)$.
We define the \emph{rank} of $\phi$ as $\rk(\phi) = \rho_B(\ho_B) - \rho_\Gamma(\ho_\Gamma)$.

For an order-preserving, rank-increasing and surjective function $\phi\colon \Gamma\to B$, we define a few preimage posets for $x\in B$ and $y\in \Gamma$:
	{\renewcommand{\arraystretch}{1.4}
	\begin{center}
$\begin{tabu}{l c l }
\Gamma_x & := & \{y' \in \Gamma \, | \, \phi(y')\leq x\}; \\\relax
\Gamma_{\geq y} & := & \{y' \in \Gamma \, | \, y\leq y'\}; \\\relax
(\Gamma_{\geq y})_x & := & \{ y'\in \Gamma \, | \,  y\leq y', \phi(y') \leq x \}.
 \relax
\end{tabu}$	\end{center}}

We recall a notion of subdivision studied by Katz--Stapledon in \cite{katz16}.

\begin{defn}
	Let $\Gamma$ and $B$ be locally Eulerian posets. 
	An order-preserving, rank-increasing and surjective function $\phi\colon \Gamma \rightarrow B$ is said to be a \emph{strong formal subdivision} if the following are true:
	\begin{enumerate}
		\item (Strongly surjective) For all $y\in \Gamma$ and $x\in B$ such that $\phi(y)\leq x$, there exists $y' \geq y$ such that $\phi(y') = x$ and $\rho(y') = \rho(x)$; and
		
		\item For all $y\in \Gamma$ and $x\in P$ such that $\phi(y)\leq x$,
		      \begin{equation}\label{eq:sfsdef}
		      \sum_{\substack{y'\in \phi^{-1}(x)\\
		      y\leq y'}} (-1)^{\rho(y')} = (-1)^{\rho(x)}.
		      \end{equation}
	\end{enumerate}
\end{defn}

There are several nice properties of the strong formal subdivision as shown in \cite{katz16}. 
The composition of strong formal subdivisions is a strong formal subdivision. 
For any $y\in \Gamma$, the restriction $\phi|_{\Gamma_{\geq y}}
\colon \Gamma_{\geq y}\rightarrow B_{\geq \phi(y)}$ is a strong formal subdivision.
For any lower order ideal $I$ of $B$, its preimage under $\phi$ is locally Eulerian and the restriction $\phi|_{\phi^{-1}(I)}\colon \phi^{-1}(I)\to I$ is a strong formal subdivision. 
Combining these last two properties, the restriction $\phi_{(\Gamma_{\geq y})_x}\colon (\Gamma_{\geq y})_x\rightarrow [\phi(y), x]$ is also a strong formal subdivision. 

We cite a characterization of strong formal subdivisions 
\cite[Lemma~3.18]{katz16}. 

\begin{lemma}\label{l:charSFS}
	Let $\phi\colon \Gamma\rightarrow B$ be an 
order-preserving, rank-increasing and strongly surjective function	 between locally Eulerian posets. Then $\phi$ is a strong formal subdivision if and only if for all $y\in \Gamma$, $x\in B$ such that $\phi(y)\leq x$ we have 
	\begin{equation}\label{eq:sfs}
	\sum_{y'\in (\Gamma_{\geq y})_x} (-1)^{\rho(y')} =
		\begin{cases}
			(-1)^{\rho(x)} & \text{if } \phi(y) = x \\
			0       & \text{otherwise}.
		\end{cases}
	\end{equation}	
\end{lemma}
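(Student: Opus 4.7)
My plan is to view both conditions as statements about a single sum, obtained by partitioning $(\Gamma_{\geq y})_x$ according to the image $z=\phi(y')$, and then relate the two sides via Möbius inversion on the Eulerian interval $[\phi(y),x]$ of $B$. For fixed $y\in\Gamma$ and $z\in B$ with $z\geq\phi(y)$, set
\[
f(z) \; := \; \sum_{\substack{y'\in \phi^{-1}(z)\\ y\leq y'}}(-1)^{\rho(y')},
\qquad
g(z) \; := \; \sum_{y'\in (\Gamma_{\geq y})_z}(-1)^{\rho(y')}.
\]
The defining condition \eqref{eq:sfsdef} of a strong formal subdivision asserts $f(z)=(-1)^{\rho(z)}$ for every $z\geq\phi(y)$, while the lemma's condition \eqref{eq:sfs} asserts $g(x)=(-1)^{\rho(x)}$ when $\phi(y)=x$ and $g(x)=0$ otherwise. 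Because $\phi$ is order-preserving, $(\Gamma_{\geq y})_x$ is the disjoint union of the sets $\phi^{-1}(z)\cap\Gamma_{\geq y}$ over $z\in[\phi(y),x]$, which gives the key identity
\[
g(x) \; = \; \sum_{\phi(y)\leq z\leq x} f(z).
\]

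For the forward direction, I would substitute $f(z)=(-1)^{\rho(z)}$ into this identity. If $\phi(y)=x$, the sum collapses to $(-1)^{\rho(x)}$. If $\phi(y)<x$, then $[\phi(y),x]$ is a positive-rank Eulerian interval of $B$, so $\sum_{z\in[\phi(y),x]}(-1)^{\rho(z)-\rho(\phi(y))}=0$; factoring out the overall sign yields $g(x)=0$, matching \eqref{eq:sfs}.

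For the converse, assume \eqref{eq:sfs} holds for all $x\geq\phi(y)$. I would apply Möbius inversion in the Eulerian interval $[\phi(y),x]\subset B$, where the Möbius function takes the simple form $\mu(z,x)=(-1)^{\rho(x)-\rho(z)}$. This gives
\[
f(x) \; = \; \sum_{\phi(y)\leq z\leq x}(-1)^{\rho(x)-\rho(z)}\,g(z),
\]
and since $g(z)$ vanishes except at $z=\phi(y)$, where it equals $(-1)^{\rho(\phi(y))}$, only one term survives and $f(x)=(-1)^{\rho(x)-\rho(\phi(y))}(-1)^{\rho(\phi(y))}=(-1)^{\rho(x)}$, which is precisely \eqref{eq:sfsdef}.

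The only substantive input in either direction is that intervals of $B$ are Eulerian: this is what produces the vanishing sum in the forward direction and the clean formula for $\mu$ in the converse. The strong surjectivity hypothesis is not needed for this equivalence itself; it is used elsewhere to ensure fibers are nonempty so that the conditions are nonvacuous. The main bookkeeping obstacle is simply making sure one consistently uses the absolute rank function $\rho$ and remembers that the rank of an element within the interval $[\phi(y),x]$ differs from $\rho$ by the constant $\rho(\phi(y))$, which cancels on both sides.
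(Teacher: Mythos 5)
Your proof is correct. The key identity $g(x)=\sum_{\phi(y)\le z\le x}f(z)$, obtained by partitioning $(\Gamma_{\geq y})_x$ by the image under $\phi$ (which is legitimate since $\phi$ is order-preserving, so $y\le y'$ forces $\phi(y')\in[\phi(y),x]$), is exactly the right bridge, and both directions go through: the forward direction uses only that a positive-rank Eulerian interval has an equal count of elements of each parity, and the converse is a clean Möbius inversion on $[\phi(y),\infty)$ using the standard fact that $\mu(z,x)=(-1)^{\rho(x)-\rho(z)}$ on Eulerian intervals. Note that the present paper does not prove this lemma — it cites it as Lemma~3.18 of Katz--Stapledon — so there is no in-paper proof to compare against; your argument is the natural one for this equivalence, and your parenthetical observation that strong surjectivity plays no role in the biconditional itself (only in making the two conditions nonvacuously satisfiable) is accurate.
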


\begin{notn}
From now on we let $\Pi$ and $\Gamma$ be lower Eulerian posets and $B$ be an Eulerian poset of rank $n$. We also let $\sigma \colon \Pi\rightarrow \Gamma$ and $\phi\colon \Gamma\rightarrow B$ be order-preserving, rank-increasing and surjective functions of rank $0$. Thus without loss of generality we may assume the rank functions are natural rank functions, i.e.~$\rho(\ho) = 0$.
\end{notn}

We have the following characterization of near-Eulerian posets for strong formal subdivision of rank $0$.

\begin{prop}[Near-Eulerian criterion for poset maps] \label{p:nearcri}
	Let $\Gamma$ be lower Eulerian and $B$ be Eulerian, both of rank $n$.
	Let $\phi\colon \Gamma \rightarrow B$ be an order-preserving, rank-increasing and surjective function of rank $0$.
	Then $\phi$ is a strong formal subdivision if and only if for any $\ho \neq x\in B$, the preimage poset $\phi^{-1}[\ho, x]$ is near-Eulerian of rank $\rho(x)$ with boundary $\phi^{-1}[\ho, x)$.
\end{prop}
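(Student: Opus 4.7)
The plan is to read both sides as alternating-sum conditions. Lemma~\ref{l:charSFS} characterizes strong formal subdivisions by one such sum, while Proposition~\ref{p:charnear} characterizes near-Eulerian posets by three. The natural candidate for the ideal $I$ in Proposition~\ref{p:charnear} is $\phi^{-1}[\ho,x)$ sitting inside $P_x := \phi^{-1}[\ho,x]$, and the strategy is to show that with this choice the three near-Eulerian conditions are equivalent to the SFS sum condition together with~\eqref{eq:sfsdef}.

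For the forward direction, fix $x\in B$ with $x\neq \ho$. Restrictions of SFSs to preimages of lower order ideals are SFSs (as cited after the definition of SFS), so $\phi|_{P_x}\colon P_x\to[\ho,x]$ and $\phi|_{\phi^{-1}[\ho,x)}\colon \phi^{-1}[\ho,x)\to[\ho,x)$ are SFSs; applying strong surjectivity at $\ho_\Gamma$, $P_x$ is lower Eulerian of rank $\rho(x)$ and $I:=\phi^{-1}[\ho,x)$ is a lower order ideal of rank $\rho(x)-1$. For $y\in P_x$, observe that $(\Gamma_{\geq y})_x=[y,\infty)_{P_x}$. If $y\in I$ then Lemma~\ref{l:charSFS} supplies~\eqref{eq:c2}, and subtracting~\eqref{eq:sfsdef} yields~\eqref{eq:c1}; if $y\notin I$ then $\phi(y)=x$ and Lemma~\ref{l:charSFS} supplies~\eqref{eq:c3}. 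Proposition~\ref{p:charnear} then identifies $P_x$ as near-Eulerian with boundary $I$.

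For the reverse direction, assume each $P_x$ is near-Eulerian with boundary $\phi^{-1}[\ho,x)$. The SFS sum condition of Lemma~\ref{l:charSFS} and the defining equation~\eqref{eq:sfsdef} follow by reversing the manipulations above: \eqref{eq:c2} handles the $\phi(y)<x$ case of Lemma~\ref{l:charSFS}, \eqref{eq:c3} handles the $\phi(y)=x$ case, and subtracting~\eqref{eq:c1} from~\eqref{eq:c2} recovers~\eqref{eq:sfsdef}. The main obstacle is strong surjectivity, since it is not directly encoded in the near-Eulerian criterion.

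To verify strong surjectivity, given $y\in\Gamma$ and $x\in B$ with $\phi(y)\leq x$ (the case $x=\ho$ being trivial), the plan is to pass to the associated Eulerian poset $Q_x=P_x\cup\{q,\hi\}$ guaranteed by near-Eulerianity of $P_x$. Since $\partial P_x=\phi^{-1}[\ho,x)$, we have $\phi^{-1}(x)=P_x\setminus\partial P_x$, which coincides with the set of coatoms of $Q_x$ other than $q$. The interval $[y,\hi]_{Q_x}$ is Eulerian: if it has rank $1$ then $y$ itself is a coatom of $Q_x$ distinct from $q$, so $y\in\phi^{-1}(x)$ with $\rho(y)=\rho(x)$ and we take $y'=y$; if its rank is at least $2$, a short Euler--Poincar\'e computation (equivalently, the diamond property of Eulerian posets) shows that it has at least two coatoms, and at least one of them must differ from $q$, furnishing the required $y'\in\phi^{-1}(x)$ with $\rho(y')=\rho(x)$.
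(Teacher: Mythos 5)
Your proof is correct and follows the same overall strategy as the paper: translate both sides into alternating-sum conditions via the near-Eulerian criterion for posets (Proposition~\ref{p:charnear}) and Lemma~\ref{l:charSFS}, with $I := \phi^{-1}[\ho,x)$ as the candidate ideal, and observe that $(\Gamma_{\geq y})_x$ is exactly the interval $[y,\infty)$ in $P_x$. Two points of comparison are worth noting. First, in the forward direction the paper reduces to the single case $x=\hi$ by inducting on the rank of $B$ (using that restriction to $\Gamma_x$ is again an SFS of rank $0$), whereas you verify the near-Eulerian criterion directly for every $x$; both are fine, and your version avoids the induction scaffolding. Second, and more substantively, the paper dispatches strong surjectivity in the reverse direction with a single preliminary observation -- that $\phi$ is strongly surjective iff each $\phi^{-1}[\ho,x]$ is graded of rank exactly $\rho(x)$, so the hypothesis ``near-Eulerian of rank $\rho(x)$'' already encodes it -- while you construct $y'$ by passing to the associated Eulerian poset $Q_x$ and invoking the diamond property to find a coatom above $y$ distinct from $q$. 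Your route is correct but heavier; the paper's observation (which it states without proof) is the more economical device, since any maximal chain through $y$ in the graded poset $P_x$ already terminates at a rank-$\rho(x)$ element $y'$, which is forced to satisfy $\phi(y')=x$ by rank-increase.

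One small inaccuracy in your write-up: you assert that $\phi^{-1}(x) = P_x\setminus\partial P_x$ ``coincides with the set of coatoms of $Q_x$ other than $q$.'' The first equality is right, but the second is not -- $P_x\setminus\partial P_x$ can contain elements of rank strictly less than $\rho(x)$. Indeed, in Example~\ref{exam:cutting} (subdividing an edge at an interior vertex), the midpoint lies in $\phi^{-1}(\hi)$ yet has rank $1 < 2 = \rho(\hi)$. What you actually need, and what is true, is the one-sided containment: every coatom of $Q_x$ other than $q$ lies outside $[\ho,q) = \partial P_x$ (they have the same rank as $q$ so cannot lie below it), hence lies in $\phi^{-1}(x)$. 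Since your argument only produces a coatom of $[y,\hi]_{Q_x}$ different from $q$ and then uses this containment, the proof goes through unchanged once the ``coincides with'' is weakened to ``contains.''
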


\begin{proof}
First we see that the function $\phi$ is strongly surjective if and only if for any $x\in B$, the preimage poset $\phi^{-1}[\ho, x]$ is graded of rank exactly $\rho(x)$. 

Now suppose $\phi$ is a strong formal subdivision. We want to show that for $\ho \neq x\in B$ the preimage poset $\Gamma_x = \phi^{-1}[\ho, x]$ is near-Eulerian with boundary $\phi^{-1}[\ho, x)$. 
If $B$ is the singleton poset this is clear since $\phi$ is of rank $0$.
Since $[\ho, x]$ is a lower order ideal, the preimage poset $\Gamma_x$ is lower Eulerian and the restriction $\phi_x\colon \Gamma_x\to [\ho, x]$ is also a strong formal subdivision of rank $0$.
Thus by induction on the rank of $B$, it suffices to only prove 
$\Gamma=\phi^{-1} (B)$ is near-Eulerian with boundary $\phi^{-1}(\partial B)$.

We use the near-Eulerian criterion for posets with the lower order ideal $I :=  \phi^{-1}(\partial B)$.
Both Condition~\eqref{eq:c2} and Condition~\eqref{eq:c3} follow from Lemma~\ref{l:charSFS} with $x=\hi$ and $y \in \Gamma$ since
\[\sum_{y' \in [y, \infty)} (-1)^{\rho(y')} =
 \sum_{y'\in (\Gamma_{\geq y})_{\hi}} (-1)^{\rho(y')}
\]
and $y \in I$ if and only if $\phi(y) \neq \hi$.
For $y\in I$, Condition~\eqref{eq:c1} follows from
Lemma~\ref{l:charSFS} with $x=\hi$ and an application of Condition~\eqref{eq:c2}:
\begin{align*}
\sum_{y' \in I\cap [y, \infty)} (-1)^{\rho(y')} &=
\sum_{y' \in I\cap [y, \infty)} (-1)^{\rho(y')} - \sum_{y' \in [y, \infty)} (-1)^{\rho(y')}\\
& = - \sum_{\substack{y' \\ y\leq y', \phi(y') = \hi}} (-1)^{\rho(y')}\\
& = -\sum_{y'\in (\Gamma_{\geq y})_{\hi}} (-1)^{\rho(y')}\\
& =
(-1)^{n+1},
\end{align*}
since $B = \partial B \cup \{\hi\}$ and hence $y'\notin \phi^{-1}(\partial B)$ implies $\phi(y') = \hi$.
Thus by the near-Eulerian criterion for posets, the poset $\Gamma$ is near-Eulerian with boundary $\phi^{-1}(\partial B)$.

\medskip

	Now suppose $\phi^{-1}[\ho, x]$ is near-Eulerian with boundary $\phi^{-1}[\ho, x)$ for $0\neq x\in B$.
	For $x\in B$ and $y\in \Gamma_x$ we want to prove \eqref{eq:sfs} in Lemma~\ref{l:charSFS}:	
\[		\sum_{y'\in (\Gamma_{\geq y})_x} (-1)^{\rho(y')} =
		\begin{cases}
			(-1)^{\rho(x)} & \text{if } \phi(y) = x \\
			0       & \text{otherwise}.
		\end{cases}\]
	Since $\phi(y) = x$ if and only if $y\notin \phi^{-1}[\ho, x)$, the equation follows from Condition~\eqref{eq:c2} and Condition~\eqref{eq:c3} in the statement of the near-Eulerian criterion for posets.
\end{proof}

\begin{remark}
We can extend the criterion to subdivisions of positive rank by further requiring the preimage $\phi^{-1}\{\ho\}$ to be the boundary of an Eulerian poset.
\end{remark}

For a subdivision on the interior of a poset, we have the following corollary. 

\begin{corollary}\label{c:intsub}
Let $\phi\colon \Gamma\rightarrow B$ be an order-preserving, rank-increasing and surjective function of rank $0$. 
Suppose further $\phi^{-1}(\partial B) = \partial \Gamma$ and the restriction to the boundary $\phi|_{\partial \Gamma} \colon \partial \Gamma\rightarrow \partial B$ is an isomorphism. Then $\phi$ is a strong formal subdivision if and only if $\Gamma$ is near-Eulerian.
\end{corollary}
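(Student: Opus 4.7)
The plan is to deduce this directly from Proposition~\ref{p:nearcri} (the near-Eulerian criterion for poset maps), which characterizes strong formal subdivisions of rank $0$ in terms of the near-Eulerianness of the preimages $\phi^{-1}[\ho_B, x]$ together with their boundaries.

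For the forward direction, if $\phi$ is a strong formal subdivision, then applying Proposition~\ref{p:nearcri} with $x = \hi_B$ gives that $\phi^{-1}[\ho_B, \hi_B] = \Gamma$ is near-Eulerian.

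For the backward direction, assume $\Gamma$ is near-Eulerian and verify the hypothesis of Proposition~\ref{p:nearcri} by splitting into two cases according to whether $x = \hi_B$ or $x \in \partial B$. If $x = \hi_B$, the preimage is all of $\Gamma$, which is near-Eulerian by hypothesis, and its boundary is $\partial \Gamma = \phi^{-1}(\partial B) = \phi^{-1}[\ho_B, \hi_B)$ by the assumption $\phi^{-1}(\partial B) = \partial \Gamma$. If instead $x \in \partial B$ with $x \neq \ho_B$, then $[\ho_B, x] \subseteq \partial B$, hence $\phi^{-1}[\ho_B, x] \subseteq \phi^{-1}(\partial B) = \partial \Gamma$, and since $\phi|_{\partial \Gamma}$ is an isomorphism onto $\partial B$, the preimage $\phi^{-1}[\ho_B, x]$ is isomorphic as a poset to $[\ho_B, x]$. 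But $[\ho_B, x]$ is an interval in the Eulerian poset $B$ of positive rank $\rho(x)$, hence Eulerian of positive rank, and therefore near-Eulerian by the proposition that Eulerian posets of positive rank are near-Eulerian. Its boundary identifies with $[\ho_B, x) = [\ho_B, x] \setminus \{x\}$, which under the isomorphism corresponds to $\phi^{-1}[\ho_B, x) = \phi^{-1}[\ho_B, x] \setminus \phi^{-1}(x)$ (here $\phi^{-1}(x) \subseteq \partial \Gamma$ is a single element since $\phi|_{\partial \Gamma}$ is a bijection).

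There is no real obstacle here beyond carefully unpacking the criterion; the only nontrivial point is the observation that the boundary isomorphism reduces the case $x \in \partial B$ to a statement about an interval in the Eulerian poset $B$, after which the assertion is automatic.
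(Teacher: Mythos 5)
Your proof is correct and follows essentially the same route as the paper: both directions are deduced from Proposition~\ref{p:nearcri} by splitting into the cases $x=\hi_B$ and $x\in\partial B$. You simply spell out in more detail why the $x\in\partial B$ case is automatic (via the boundary isomorphism identifying $\phi^{-1}[\ho,x]$ with the Eulerian interval $[\ho,x]$), which the paper leaves terse.
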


\begin{proof}
If $\phi$ is a strong formal subdivision, then by the near-Eulerian criterion for poset maps $\phi^{-1}(B) = \Gamma$ is near-Eulerian.

On the other hand, in order to prove that $\phi$ is a strong formal subdivision, by the near-Eulerian criterion for poset maps, it suffices to prove that for any $x\neq \ho$ the preimage poset $\phi^{-1}[\ho, x]$ is near-Eulerian with boundary $\phi^{-1}[\ho, x)$. For $x\neq \hi$, we have $x\in\partial B$, and this is automatic.  The case of $x=\hi$ is equivalent to showing $\Gamma$ has boundary $\phi^{-1}(\partial B)$, which is one of our assumptions.
\end{proof}

For a near-Eulerian poset $\Gamma$, we can define a function $\phi\colon \Gamma \to \ov{\partial \Gamma}$ that is the identity on $\partial \Gamma$ and sends $\Gamma\setminus \partial \Gamma$ to $\hi \in \ov{\partial \Gamma}$. By the corollary, the function $\phi$ is a strong formal subdivision.

\subsection{Extensions}
We explore some basic properties of strong formal subdivisions.

\begin{defn}[Extension of posets and poset maps]\label{def:extension}
Let $\sigma\colon \Pi \rightarrow \Gamma$ be an order-preserving, rank-increasing and surjective function of rank $0$ between lower Eulerian posets.
Suppose $\Gamma$ is a lower order ideal in some Eulerian poset $B$.
We define the extension of $\Pi$ over $B$ to be
$\wt{\Pi} := \Pi \cup B\setminus \Gamma$ with relations $y \leq x$ when
	\begin{enumerate}
		\item $y \leq x$ in $B$ for $x, y \in B\setminus \Gamma$;
		
		\item $y\leq x$ in $\Pi$ for $x, y\in \Pi$; or
		
		\item \label{c:mixposet} $\sigma(y) \leq x$ in $B$ for $y\in \Pi$, $x\in B$.
	\end{enumerate}
	We also define the extension of $\sigma$ to be $\tsigma\colon \wt{\Pi} \rightarrow B$ with $\tsigma(y) = \sigma(y)$ if $y\in \Pi\subset \wt{\Pi}$ and $\tsigma(y) = y$ if $y\in B\setminus \Gamma\subset \wt{\Pi}$.
\end{defn}

The poset $\wt{\Pi}$ is a graded poset of the same rank as $B$ and its natural rank function is given by
\[\rho(y) =	\begin{cases}
		\rho_{\Pi} (y) & \text{ if } y \in \Pi \\
		\rho_{B} (y)     & \text{ if }  y \in B.
	\end{cases}\]
	Furthermore the function $\tsigma$ is order-preserving, rank-increasing and surjective of rank $0$.
We call the induced function $\wt{\sigma}$ the extension of $\sigma$ over $B$ and the induced poset $\wt{\Pi}$ the extension of $\Pi$ over $B$.

\begin{prop}\label{p:extension}
Suppose $\sigma\colon \Pi\rightarrow \Gamma\subset B$ is a strong formal subdivision of rank $0$. Then $\wt{\Pi}$, the extension of $\Pi$ over $B$, is lower Eulerian and $\tsigma$ is a strong formal subdivision. Moreover $\wt{\Pi}$ is Eulerian if $\Gamma \subsetneq B$.
\end{prop}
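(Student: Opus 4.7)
The plan is to argue by induction on $n = \rk B$, establishing simultaneously that $\wt{\Pi}$ is lower Eulerian, that $\tsigma$ is a strong formal subdivision, and that $\wt{\Pi}$ is Eulerian when $\Gamma \subsetneq B$. The base case $n = 0$ forces $\Pi = \Gamma = B = \{\ho\}$, so all claims are vacuous.

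For the inductive step, I first verify that every interval $[y_1, y_2]$ of $\wt{\Pi}$ is Eulerian, by case analysis on whether $y_1, y_2$ lie in $\Pi$ or in $B \setminus \Gamma$. Because $\Gamma$ is a lower order ideal of $B$, no element of $B \setminus \Gamma$ can be bounded above by an element of $\Pi$ in $\wt{\Pi}$. Consequently, if both $y_i \in \Pi$ then $[y_1, y_2]_{\wt{\Pi}} = [y_1, y_2]_{\Pi}$ is Eulerian since $\Pi$ is lower Eulerian, and if both $y_i \in B \setminus \Gamma$ then $[y_1, y_2]_{\wt{\Pi}} = [y_1, y_2]_B$ is Eulerian. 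The mixed case $y_1 \in \Pi$, $y_2 \in B \setminus \Gamma$ identifies $[y_1, y_2]_{\wt{\Pi}}$ as the extension of the sfs obtained by restricting $\sigma$ to $\sigma^{-1}(I)$ over $[\sigma(y_1), y_2]_B$, where $I = \Gamma_{\geq \sigma(y_1)} \cap [\sigma(y_1), y_2]_B$ is a proper lower order ideal of $[\sigma(y_1), y_2]_B$. Provided $\rho(y_1) > 0$ or $y_2 < \hi_B$, the rank of $[\sigma(y_1), y_2]_B$ is strictly less than $n$, so the induction hypothesis applies and yields that this extension is Eulerian.

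Next, I verify that $\tsigma$ is a strong formal subdivision via the near-Eulerian criterion for poset maps (Proposition~\ref{p:nearcri}): it suffices to show that $\tsigma^{-1}[\ho, x]$ is near-Eulerian of rank $\rho(x)$ with boundary $\tsigma^{-1}[\ho, x)$ for each $\ho \neq x \in B$. For $x \in \Gamma$, the preimage $\tsigma^{-1}[\ho, x]$ coincides with $\sigma^{-1}[\ho, x]$ (again because $\Gamma$ is a lower order ideal), and the claim follows by applying Proposition~\ref{p:nearcri} to the sfs obtained by restricting $\sigma$ to this lower order ideal. For $x \in B \setminus \Gamma$ with $x < \hi_B$, the preimage is the extension of a sfs over the Eulerian interval $[\ho, x]_B$ of rank less than $n$ with proper lower order ideal $\Gamma \cap [\ho, x]_B$; by induction it is Eulerian, hence near-Eulerian since it has positive rank, using that Eulerian posets of positive rank are near-Eulerian. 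The boundary is then $\tsigma^{-1}[\ho, x) = \tsigma^{-1}[\ho, x] \setminus \{x\}$, as $x$ is the unique preimage of itself under $\tsigma$. The outstanding case $x = \hi_B$ with $\Gamma \subsetneq B$ reduces to the Eulerianness of $\wt{\Pi}$ itself.

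For this final case, the only interval of $\wt{\Pi}$ not yet accounted for is $[\ho, \hi_B]_{\wt{\Pi}} = \wt{\Pi}$, so verifying Eulerianness amounts to the Euler--Poincar\'e computation
\[
\sum_{z \in \wt{\Pi}} (-1)^{\rho(z)} = \sum_{z \in \Pi} (-1)^{\rho(z)} + \sum_{z \in B \setminus \Gamma} (-1)^{\rho(z)}.
\]
Decomposing the first sum by fibers of $\sigma$ and applying equation~\eqref{eq:sfsdef} with $y = \ho$ to each fiber gives $\sum_{x \in \Gamma} (-1)^{\rho(x)}$; the second sum equals $-\sum_{x \in \Gamma} (-1)^{\rho(x)}$ since $B$ is Eulerian of positive rank, so the total vanishes. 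The main obstacle is the bookkeeping required to recognize the mixed-type intervals as smaller extensions to which the induction applies; once this is arranged, the rest of the argument is a routine assembly of the near-Eulerian criterion and the defining sfs identity.
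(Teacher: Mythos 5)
Your approach differs substantially from the paper's: you induct on $\rk B$ and invoke Proposition~\ref{p:nearcri}, whereas the paper verifies lower-Eulerianness by a single direct computation with the identity \eqref{eq:sfsdef} and then checks the sfs conditions for $\tsigma$ by hand, with no induction. However, as written, the inductive step for the mixed interval has a gap. You claim $[y_1,y_2]_{\wt{\Pi}}$ is ``the extension of the sfs obtained by restricting $\sigma$ to $\sigma^{-1}(I)$'' and then apply the induction hypothesis. Two problems arise. First, for $y_1 \neq \ho$, the set $\sigma^{-1}(I)$ is not a lower order ideal of $\Pi$ and can have several minimal elements, so it is not the domain of a restricted sfs in the sense used here; the set you actually want is $(\Pi_{\geq y_1})_{y_2}$, and the interval $[y_1,y_2]_{\wt{\Pi}}$ is the extension of $\sigma|_{(\Pi_{\geq y_1})_{y_2}} \colon (\Pi_{\geq y_1})_{y_2} \to (\Gamma_{\geq \sigma(y_1)})_{y_2}$ over $[\sigma(y_1),y_2]_B$. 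Second, and more seriously, this restricted map has rank $\rho(\sigma(y_1)) - \rho(y_1)$ with the inherited rank functions, which can be strictly positive when $y_1$ is a new element sitting in the interior of a cell of higher rank (e.g.\ the barycentre of an edge mapping to that edge). Proposition~\ref{p:extension}, the induction hypothesis you are invoking, is stated only for rank-$0$ strong formal subdivisions, and re-normalizing to natural rank functions on $(\Pi_{\geq y_1})_{y_2}$ destroys both the rank-increasing and strong surjectivity conditions, so the hypothesis does not apply.

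This can be repaired. One fix is to notice that only the intervals $[\ho, y_2]$ need to be treated: for $y_2 \in B\setminus\Gamma$ with $y_2 \neq \hi_B$, the interval $[\ho, y_2]_{\wt{\Pi}} = \tsigma^{-1}[\ho,y_2]$ is exactly the extension of the genuinely rank-$0$ sfs $\sigma|_{\Pi_{y_2}} \colon \Pi_{y_2} \to \Gamma_{y_2}$ over the Eulerian interval $[\ho,y_2]_B$ of rank $< n$, so the induction hypothesis applies; every other interval $[y_1,y_2]$ is a subinterval of some $[\ho,y_2]$ and thus inherits the Eulerian property, and the top interval $[\ho,\hi_B]$ (present only when $\Gamma\subsetneq B$) is handled by your Euler--Poincar\'e computation. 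With that restructuring the argument is sound, and it does give a genuinely different proof from the paper's, at the cost of an induction and the near-Eulerian criterion where the paper uses a direct alternating-sum computation.
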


\begin{proof}
Let $[y_1, y_2]$ be an interval in $\wt{\Pi}$ where $y_1\neq y_2$. We want to show that it has the same number of elements of each parity.
If it is entirely in $B\setminus \Gamma$ or $\Pi$, then we are done.
Otherwise, we have $y_1\in \Pi$, $y_2\in B$ and $\sigma(y_1)\neq y_2$. 
Then we have
\begin{align*}
\sum_{z\in [y_1, y_2]\subset \wt{\Pi}} (-1)^{\rho(z)} 
& =
\sum_{z'\in [\sigma(y_1), y_2]} \sum_{\substack{z\in \tsigma^{-1}(z')\\ y_1\leq z}} (-1)^{\rho(z)} \\ 
& =
\sum_{z'\in [\sigma(y_1), y_2]\cap \Gamma} \sum_{\substack{z\in \tsigma^{-1}(z')\\ y_1\leq z}} (-1)^{\rho(z)} +
\sum_{z'\in [\sigma(y_1), y_2]\setminus \Gamma} \sum_{\substack{z\in \tsigma^{-1}(z')\\ y_1\leq z}} (-1)^{\rho(z)}\\
& =
\sum_{z'\in [\sigma(y_1), y_2]\cap \Gamma} (-1)^{\rho(z')} +
\sum_{z'\in [\sigma(y_1), y_2]\setminus \Gamma} (-1)^{\rho(z')}\\
& = \sum_{z'\in [\sigma(y_1), y_2]} (-1)^{\rho(z')}\\
& = 0,
\end{align*}
since for $z'\notin \Gamma$ the only element of $\tsigma^{-1}(z')$ is $z'$ and for $z'\in \Gamma$, we may apply \eqref{eq:sfsdef} in the definition of a strong formal subdivision.
Thus the interval $[y_1, y_2]$ has the same number of elements of each parity, and $\wt{\Pi}$ is lower Eulerian.

Now we want to prove that the function $\tsigma$ is a strong formal subdivision. 
By construction the function is order-preserving, rank-increasing and surjective.
For strong surjectivity, it suffices to prove that for any $x\in B$ the preimage $\tsigma^{-1}[\ho, x]$ is graded of rank $\rho_B(x)$.
For $x\in \Gamma$ the condition follows from the strong surjectivity of $\sigma$ while for $x\in B\setminus \Gamma$ the condition follows from the construction of $\tsigma$.

For \eqref{eq:sfsdef}, it suffices to prove 
\[
\sum_{\substack{y'\in \tsigma^{-1}(x)\\
		      y\leq y'}} (-1)^{\rho(y')} = (-1)^{\rho(x)}\]
for $x\in \Gamma$ and $y \in \Pi$ such that $\tsigma(y)\leq x$, since for $x\in B\setminus \Gamma$, the preimage $\tsigma^{-1}(x)$ is a singleton of rank $\rho(x)$. 
But this follows immediately from $\sigma$ being a strong formal subdivision.

If $\Gamma\subsetneq B$, then $\hi_B$ is not in $\Gamma$ and $\hi_B \in \wt{\Pi}$. Hence $\wt{\Pi}$ is Eulerian.
\end{proof}

If $\Gamma$ happens to be near-Eulerian, it is naturally a lower order ideal of some Eulerian poset, namely $\ov{\ses{\Gamma}}$. This leads to the following proposition.

\begin{prop}\label{p:sfsses}
Let $\sigma\colon \Pi\to \Gamma$ be an order-preserving, rank-increasing and surjective function of rank $0$.
Suppose further that $\Gamma$ is near-Eulerian.
Then $\Pi$ is near-Eulerian and 
$\sigma$ extends to a strong formal subdivision between the semisuspensions of $\Pi$ and $\Gamma$.
Furthermore $\sigma^{-1}(\partial \Gamma) = \partial \Pi$.
\end{prop}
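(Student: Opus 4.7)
The plan is to apply Proposition~\ref{p:extension} with $B := \ov{\ses{\Gamma}}$, which is Eulerian by Proposition~\ref{p:sesnear}. First I check that $\Gamma$ sits inside $\ov{\ses{\Gamma}}$ as a proper lower order ideal: the only elements of $\ov{\ses{\Gamma}}\setminus\Gamma$ are the adjoined $q$ of rank $n$ and the adjoined maximum $\hi$, and by construction of the semisuspension nothing in $\Gamma$ lies above either of them.

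Taking $\sigma$ to be a strong formal subdivision (as is needed from the notation preceding the proposition), Proposition~\ref{p:extension} yields a strong formal subdivision $\tsigma\colon \wt{\Pi}\to \ov{\ses{\Gamma}}$ with $\wt{\Pi}$ Eulerian. As a set, $\wt{\Pi}=\Pi\cup\{q,\hi\}$ with $\rho(q)=n$ and $\rho(\hi)=n+1$, so removing $q$ and $\hi$ from this rank-$(n+1)$ Eulerian poset returns $\Pi$. Hence $\Pi$ is near-Eulerian with associated Eulerian poset $\wt{\Pi}$.

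To identify the boundary, condition~(\ref{c:mixposet}) in Definition~\ref{def:extension} gives $y<q$ in $\wt{\Pi}$ iff $\sigma(y)\leq q$ in $\ov{\ses{\Gamma}}$ iff $\sigma(y)\in\partial\Gamma$. Combined with the description $\partial\Pi=[\ho,q)\subset \wt{\Pi}$ of the boundary of a near-Eulerian poset (Proposition~\ref{p:charnear}), this gives $\partial\Pi=\sigma^{-1}(\partial\Gamma)$. Consequently $\wt{\Pi}\setminus\{\hi\}$ is exactly $\Pi\cup\{q\}$ with $\partial\Pi<q$, so it coincides with $\ses{\Pi}$; likewise $\ov{\ses{\Gamma}}\setminus\{\hi\}=\ses{\Gamma}$.

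Finally, $\ses{\Gamma}$ is a lower order ideal of $\ov{\ses{\Gamma}}$ (it is $\ov{\ses{\Gamma}}$ with the maximum removed) and $\tsigma^{-1}(\ses{\Gamma})=\ses{\Pi}$. The property (recalled from \cite{katz16} just after the definition of strong formal subdivision) that the restriction of a strong formal subdivision to the preimage of a lower order ideal is again a strong formal subdivision then delivers the required extension $\ses{\sigma}\colon \ses{\Pi}\to\ses{\Gamma}$. The main piece of bookkeeping is to confirm that the relations in $\wt{\Pi}$ inherited from the extension construction match the semisuspension relations $\partial\Pi<q$; this reduces to the identification $\partial\Pi=\sigma^{-1}(\partial\Gamma)$ established in the previous paragraph.
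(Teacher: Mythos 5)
Your proof is correct and follows essentially the same route as the paper: extend $\sigma$ over $\ov{\ses{\Gamma}}$ via Proposition~\ref{p:extension}, deduce that $\wt{\Pi}$ is Eulerian and hence that $\Pi$ is near-Eulerian, and then identify the boundary. The only minor variation is that you read off $\partial\Pi=\sigma^{-1}(\partial\Gamma)$ directly from the extension relation \eqref{c:mixposet}, whereas the paper invokes the near-Eulerian criterion for poset maps (Proposition~\ref{p:nearcri}) for the same identity; you also correctly note that the hypotheses tacitly require $\sigma$ to be a strong formal subdivision, which Proposition~\ref{p:extension} needs.
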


\begin{proof}
Since $\Gamma$ is near-Eulerian, we have that $\Gamma\subset \ov{\ses{\Gamma}}$ as a lower order ideal. Then $\tsigma$, the extension over $\ov{\ses{\Gamma}}$, is a strong formal subdivision from $\wt{\Pi}$ to $\ov{\ses{\Gamma}}$.
By Proposition~\ref{p:extension}, the poset $\wt{\Pi}$ is Eulerian since $\Gamma\subsetneq \ov{\ses{\Gamma}}$. By the construction of $\wt{\Pi}$, we have $\wt{\Pi} = \Pi \cup \{q_\Pi, \hi\}$, which means $\Pi$ is near-Eulerian and $\wt{\Pi} = \ov{\ses{\Pi}}$.

Now by the near-Eulerian criterion for poset maps, 
we have that the boundary of $[\ho, q_\Pi] = \tsigma^{-1}[\ho, q_{\Gamma}]$
is given by 
$\partial \Pi = [\ho, q_\Pi) = \tsigma^{-1}[\ho, q_{\Gamma}) = \sigma^{-1}(\partial \Gamma	)$. 
\end{proof}

\subsection{Intermediate maps}

Let $\phi\colon \Gamma\to B$ be an order-preserving, rank-increasing and surjective function of rank $0$, where $\Gamma$ is lower Eulerian and $B$ is Eulerian, both of rank $n$.
Let $x_0, \dots, x_N$ be a list of all elements in $B$ such that 
\[\rho(x_0) \leq \rho(x_1) \leq \dots \leq \rho(x_N)
.\] In particular we have $x_0=\ho$ and $x_N = \hi$. Note that for any $i$ the subset $\{x_0, \dots, x_i\}$ is a lower order ideal of $B$.

\begin{defn}
	We define the \emph{intermediate posets} $B_i$ for $0 \leq i \leq N$ as follows:
	\begin{itemize}
		\item $B_i = \phi^{-1}\{x_0, \dots, x_i\} \cup \{ x_{i+1}, \dots, x_N\}$.
		\item We say $y \leq x$ in $B_i$ if and only if exactly one of the following holds:
		      \begin{enumerate}
			      \item  $x, y \in \{ x_{i+1}, \dots, x_N\}$ and $y \leq x$ in $B$;
			      \item  $x, y \in \phi^{-1}\{x_0, \dots, x_i\}$ and  $y \leq x$ in $\Gamma$; or
			      \item $y \in \phi^{-1}\{x_0, \dots, x_i\}, x \in \{ x_{i+1}, \dots, x_N\}$ and $\phi(y) \leq x$ in $B$.
		      \end{enumerate}
	\end{itemize}
We define the \emph{intermediate maps} $\phi_i\colon  B_{i} \rightarrow B_{i-1}$ for $0 < i \leq N$ by 
\[
	\phi_i(y) =
	\begin{cases}
		\phi(y) & \text{if } y \in \phi^{-1}(x_i)  \\
		y       & \text{otherwise.}
	\end{cases}
\]
\end{defn}

\medskip

Note that while we are writing $\Gamma = B_N$ for simplicity, the poset $\Gamma$ is not Eulerian in general. For $0<i<N$, the natural rank function of $B_i$ is given by
\[\rho_{B_i}(y) =
	\begin{cases}
		\rho_{\Gamma} (y) & \text{ if } y \in \phi^{-1}\{x_0, \dots, x_i\} \\
		\rho_{B} (y)     & \text{ if }  y \in \{ x_{i+1}, \dots, x_N\}.
	\end{cases}
\]

By directly checking the definition, one can see that the intermediate maps $\phi_i$'s are order-preserving, rank-increasing and surjective.
Thus we factorize $\phi\colon \Gamma\rightarrow B$ into poset maps:
\[\begin{tikzcd}
\Gamma = B_N \arrow{r}{\phi_N} &  B_{N-1} \arrow{r}{\phi_{N-1}} & \dots \arrow{r}{\phi_2}&  B_1 \arrow{r}{\phi_1} & B_0 = B.
\end{tikzcd}\]
Note that for fixed $i$, the map $\phi_i$ is the identity map if and only if $\phi^{-1}(x_i)$ is a singleton.

\begin{prop}
Let $\phi$ be a strong formal subdivision. Then for $0 < i <N$ the poset $B_i$ is Eulerian and for $0 < i \leq N$ the intermediate map $\phi_i$ is a strong formal subdivision.
\end{prop}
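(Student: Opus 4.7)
The plan is to induct on $i$. The base case $i=0$ is trivial since $B_0=B$ is Eulerian by hypothesis. For the inductive step, assume $B_{i-1}$ is Eulerian (for some $1 \leq i \leq N$); the goal is to realize $\phi_i \colon B_i \to B_{i-1}$ as an extension in the sense of Definition~\ref{def:extension} of a rank $0$ strong formal subdivision, so that Proposition~\ref{p:extension} simultaneously yields that $\phi_i$ is a strong formal subdivision and (when $i < N$) that $B_i$ is Eulerian.

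To set this up, let $L = [\ho_B, x_i] \subseteq B$, a lower order ideal. By the standard restriction property of strong formal subdivisions recalled after the definition, $\phi|_{\phi^{-1}(L)}$ is a strong formal subdivision. Unpacking the definition of the order in $B_{i-1}$ shows that $[\ho, x_i]_{B_{i-1}} = \phi^{-1}([\ho_B, x_i)) \cup \{x_i\}$, which is a lower order ideal of $B_{i-1}$. Writing $\Pi = \phi^{-1}(L)$ and $\Gamma' = [\ho, x_i]_{B_{i-1}}$, define $\sigma := \phi_i|_\Pi \colon \Pi \to \Gamma'$; this map sends $\phi^{-1}(x_i)$ to $x_i$ and is the identity on $\phi^{-1}([\ho_B, x_i))$.

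I then verify that $\sigma$ is a rank $0$ strong formal subdivision. Order-preservation, rank-preservation (hence rank-increasing), and surjectivity are immediate from the definition of $\phi_i$. For strong surjectivity and the sign-sum axiom~\eqref{eq:sfsdef}, the case $z = x_i$ reduces directly to the corresponding axioms for $\phi$ at $x_i$ (applied to $y$ with $\phi(y) \leq x_i$), while for $z \in \phi^{-1}([\ho_B, x_i))$ we have $\sigma^{-1}(z) = \{z\}$, making both conditions automatic. Applying Proposition~\ref{p:extension} to $\sigma \colon \Pi \to \Gamma' \subset B_{i-1}$ (the hypothesis that the ambient is Eulerian comes from the inductive assumption on $B_{i-1}$) then yields that $\wt{\Pi}$ is lower Eulerian, is Eulerian when $\Gamma' \subsetneq B_{i-1}$, and that $\wt{\sigma} \colon \wt{\Pi} \to B_{i-1}$ is a strong formal subdivision. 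The strict inclusion $\Gamma' \subsetneq B_{i-1}$ holds precisely when $i < N$, since $x_{i+1} \in B_{i-1} \setminus \Gamma'$ in that range.

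The last step is to identify $\wt{\Pi}$ with $B_i$ and $\wt{\sigma}$ with $\phi_i$. The set-theoretic identity $\wt{\Pi} = \Pi \cup (B_{i-1} \setminus \Gamma') = \phi^{-1}\{x_0, \dots, x_i\} \cup \{x_{i+1}, \dots, x_N\} = B_i$ is a short direct calculation, and the coincidence of the two maps follows by unpacking the definitions. I expect the main obstacle to be verifying that the order on $\wt{\Pi}$ given by the three cases in Definition~\ref{def:extension} matches the order on $B_i$ given by the paper's three cases (1)--(3). This reduces to a finite case analysis after partitioning elements of $\wt{\Pi}$ into $\phi^{-1}(x_i)$, $\phi^{-1}([\ho_B, x_i))$, $\{x_{i+1}, \dots, x_N\}$, and $\phi^{-1}\{x_j : j < i,\, x_j \not\leq x_i\}$; each pairwise comparison between elements of two such parts reduces to a condition on $B$ or on $\Gamma$ via the definition of the order on $B_{i-1}$, and the matches are routine once the bookkeeping is set up.
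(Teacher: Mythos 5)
Your proof is correct and follows the same strategy as the paper: realize $B_i$ as the extension of $\Gamma_{x_i}$ over $B_{i-1}$ in the sense of Definition~\ref{def:extension} and conclude via Proposition~\ref{p:extension}, proceeding by (implicit in the paper, explicit in your write-up) induction on $i$. The only slight variation is that where you verify the strong-formal-subdivision axioms for $\sigma\colon \Gamma_{x_i}\to[\ho,x_i]_{B_{i-1}}$ directly by cases on whether the target element is $x_i$ or lies in $\phi^{-1}[\ho_B,x_i)$, the paper instead notes $\Gamma_{x_i}$ is near-Eulerian (Proposition~\ref{p:nearcri}) and that $\sigma$ is the identity on $\partial\Gamma_{x_i}$, then invokes Corollary~\ref{c:intsub}; both routes are valid and of comparable length.
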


\begin{proof}
For simplicity we fix $i>0$ and write $x = x_i$ and $\tau = \phi_i$.

Since $\phi$ is a strong formal subdivision, by the near-Eulerian criterion for poset maps $\Gamma_x$ is near-Eulerian. 
Now consider the restriction of $\tau_x\colon \Gamma_x\to [\ho, x]\subset B_{i-1}$. 
By construction, the poset map $\tau_x$ is the identity on the boundary $\partial \Gamma_x$.
Thus by Corollary~\ref{c:intsub} we have that $\tau_x$ is a strong formal subdivision.

By looking at the definitions of $B_i$ and $\phi_i$, one can see that they are the extension of $\Gamma_x$ over $B_{i-1}$ and the extension of $\tau_x$ over $B_{i-1}$, respectively. Thus by the properties of extensions, the poset $B_i$ is lower Eulerian and $\phi_i$ is a strong formal subdivision.
In particular for $0<i<N$, since $[\ho, x]\subsetneq B_{i-1}$, each $B_i$ contains $\hi$ and hence is Eulerian.
\end{proof}

\begin{example}
	Let $B$ be a tetrahedron and let $\Gamma$ be obtained from $B$ by performing stellar subdivision on an edge of $B$. The subdivision $\phi\colon \Gamma \to B$ is a strong formal subdivision of rank $0$, and can be factorized into smaller strong formal subdivisions as follows. Here we are omitting the identity maps.
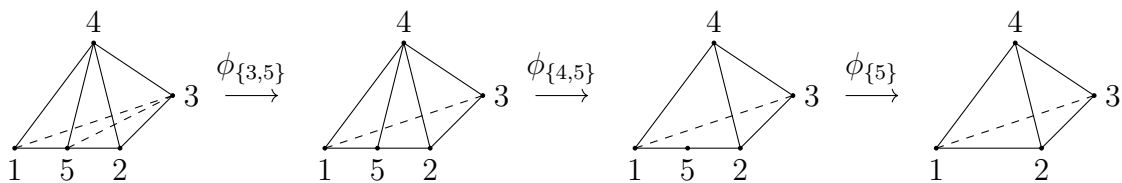
\begin{figure}[H]
	\centering
		\begin{tikzpicture}[scale=0.7]
			\draw (0,0)--(2,0)--(3,1)--(1.5,2);
			\draw (0,0)--(1.5,2)--(2,0);
			\draw (1,0)--(1.5,2);
			\draw [dashed] (0,0) --(3,1);
			\draw [dashed] (1,0)--(3,1);
			\draw [fill] (0,0) circle [radius = 1pt];
			\draw [fill] (2,0) circle [radius = 1pt];
			\draw [fill] (3,1) circle [radius = 1pt];
			\draw [fill] (1.5,2) circle [radius = 1pt];
			\draw [fill] (1,0) circle [radius = 1pt];
			\draw (0,0) node[anchor=north]{$1$};
			\draw (1,0) node[anchor=north]{$5$};
			\draw (2,0) node[anchor=north]{$2$};
			\draw (3,1) node[anchor=west]{$3$};
			\draw (1.5,2) node[anchor=south]{$4$};
			\draw [->] (4,1) -- (5,1) node[midway, above] {$\phi_{\{3,5\}}$};
		\end{tikzpicture}
		\begin{tikzpicture}[scale=0.7]
			\draw (0,0)--(2,0)--(3,1)--(1.5,2);
			\draw (0,0)--(1.5,2)--(2,0);
			\draw (1,0)--(1.5,2);
			\draw [dashed] (0,0) --(3,1);
			\draw [fill] (0,0) circle [radius = 1pt];
			\draw [fill] (2,0) circle [radius = 1pt];
			\draw [fill] (3,1) circle [radius = 1pt];
			\draw [fill] (1.5,2) circle [radius = 1pt];
			\draw [fill] (1,0) circle [radius = 1pt];
			\draw (0,0) node[anchor=north]{$1$};
			\draw (1,0) node[anchor=north]{$5$};
			\draw (2,0) node[anchor=north]{$2$};
			\draw (3,1) node[anchor=west]{$3$};
			\draw (1.5,2) node[anchor=south]{$4$};
			\draw [->] (4,1) -- (5,1) node[midway, above] {$\phi_{\{4,5\}}$};
		\end{tikzpicture}		
		\begin{tikzpicture}[scale=0.7]
			\draw (0,0)--(2,0)--(3,1)--(1.5,2);
			\draw (0,0)--(1.5,2)--(2,0);
			\draw [dashed] (0,0) --(3,1);
			\draw [fill] (0,0) circle [radius = 1pt];
			\draw [fill] (2,0) circle [radius = 1pt];
			\draw [fill] (3,1) circle [radius = 1pt];
			\draw [fill] (1.5,2) circle [radius = 1pt];
			\draw [fill] (1,0) circle [radius = 1pt];
			\draw (1,0) node[anchor=north]{$5$};
			\draw (0,0) node[anchor=north]{$1$};
			\draw (2,0) node[anchor=north]{$2$};
			\draw (3,1) node[anchor=west]{$3$};
			\draw (1.5,2) node[anchor=south]{$4$};
			\draw [->] (4,1) -- (5,1) node[midway, above] {$\phi_{\{5\}}$};
		\end{tikzpicture}
		\begin{tikzpicture}[scale=0.7]
			\draw (0,0)--(2,0)--(3,1)--(1.5,2);
			\draw (0,0)--(1.5,2)--(2,0);
			\draw [dashed] (0,0) --(3,1);
			\draw [fill] (0,0) circle [radius = 1pt];
			\draw [fill] (2,0) circle [radius = 1pt];
			\draw [fill] (3,1) circle [radius = 1pt];
			\draw [fill] (1.5,2) circle [radius = 1pt];
			\draw (0,0) node[anchor=north]{$1$};
			\draw (2,0) node[anchor=north]{$2$};
			\draw (3,1) node[anchor=west]{$3$};
			\draw (1.5,2) node[anchor=south]{$4$};
		\end{tikzpicture}
		\caption{An example of factorizing strong formal subdivisions.}
	\end{figure}
\end{example}

\section{The Decomposition theorem}\label{sec:decomp}

In this section we prove Ehrenborg and Karu's decomposition theorem for the $cd$-index \cite[Theorem~2.7]{ehrenborg07}.
By counting flags, we are able to extend the theorem to the class of strong formal subdivisions. 

\restate*

Recall that, by convention, the local $cd$-index of the singleton poset is given by $1$. Hence, the summand for $x = \ho$ is given by $1\cdot \Phi_{B}$, the $cd$-index of $B$. 
We note that if $\Gamma$ is Eulerian, then $\ell^\Phi_\Gamma = 0$ and $\Phi_\Gamma$ is homogeneous of degree $n-1$.

\begin{proof} 
Using the $cd$-indices of the intermediate posets, we have
\begin{equation}\label{eq:telescopebreakdown}
\Phi_\Gamma = (\Phi_\Gamma - \Phi_{B_{N-1}}) + (\Phi_{B_{N-1}}- \Phi_{B_{N-2}}) +  \dots + (\Phi_{B_1} - \Phi_B) + \Phi_B.
\end{equation}
We show that the first summand in the equation is given by the local $cd$-index of $\Gamma$.
Since $B_{N-1} = \phi^{-1}\{x_1, \dots, x_{N-1}\} \cup \{\hi\}$ is Eulerian, its boundary is given by $\partial B_{N-1} = \phi^{-1}\{x_1, \dots, x_{N-1}\} = 
\phi^{-1}(\partial B)\subset \Gamma$.
On the other hand, by the near-Eulerian criterion for poset maps, we have $\phi^{-1}(\partial B) = \partial \Gamma$. Thus, $\partial \Gamma =\phi^{-1}(\partial B)=\partial B_{N-1}$ and, from the definition of the $cd$-index of a near-Eulerian poset, the first summand in \eqref{eq:telescopebreakdown} gives the local $cd$-index of $\Gamma$:
\[
\Phi_\Gamma - \Phi_{B_{N-1}} = (\ell^\Phi_\Gamma + \Phi_{\partial \Gamma}) -  \Phi_{\partial B_{N-1}} = \ell^\Phi_\Gamma.
\]
Thus, it remains to prove for $0< i < N$, 
\[
\ell^\Phi_{\Gamma_{x_i}} \cdot \Phi_{[x_i, \hi]}= \Phi_{B_i} - \Phi_{B_{i-1}}.
\]
Since $\Phi_{P} = \Phi_{\partial P}$ for every Eulerian poset $P$ of positive rank, it suffices to show, as we will do below, the following equation for $0< i < N$
	\begin{equation} \label{eq:telescope}
		\ell^\Upsilon_{\Gamma_{x_i}} \cdot \Upsilon_{[x_i, \widehat{1})}= \Upsilon_{\partial B_i} - \Upsilon_{\partial B_{i-1}}.	\qedhere
	\end{equation}
\end{proof}

Equation~\eqref{eq:telescope} will follow from the two lemmas below, the first of which follows from definitions.

\begin{lemma} \label{lem:flagcount1}
For $0<i<N$ we have the identity
	\[
		\Upsilon_{\partial B_i} - \Upsilon_{\partial B_{i-1}} = \sum_{\substack{C \text{ a chain of } \partial B_i \\ C \text{ contains some element in }\phi_i^{-1}(x_i)}} u_{\rho(C)}
 \, \, \, \, \, - \sum_{\substack{C \text{ a chain of } \partial B_{i-1} \\ C \text{ contains }x_i}} u_{\rho(C)}.\]
\end{lemma}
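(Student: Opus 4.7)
The plan is to unwind the definition of the flag enumerator on both sides and observe a clean cancellation. Recall that $\Upsilon_P$ is, by definition, $\sum_{C} u_{\rho(C)}$ where the sum runs over all chains of $P$ (which by convention contain $\ho$). So both $\Upsilon_{\partial B_i}$ and $\Upsilon_{\partial B_{i-1}}$ are directly sums of characteristic monomials indexed by chains of the respective posets.

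The first step is to partition chains according to whether they meet the ``changed region.'' Since $0<i<N$, the intermediate posets $B_i$ and $B_{i-1}$ both contain $\widehat{1}$ and are Eulerian, so
\[
\partial B_i = \phi^{-1}\{x_0, \dots, x_i\} \cup \{x_{i+1}, \dots, x_{N-1}\}, \qquad
\partial B_{i-1} = \phi^{-1}\{x_0, \dots, x_{i-1}\} \cup \{x_i, \dots, x_{N-1}\}.
\]
Write the chains of $\partial B_i$ as those which contain some element of $\phi_i^{-1}(x_i)=\phi^{-1}(x_i)$ and those which contain none; similarly, write the chains of $\partial B_{i-1}$ as those which contain $x_i$ and those which do not.

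The heart of the argument is that the two ``do not contain'' collections are in natural rank-preserving bijection. Both consist of chains in the common set
\[
Q := \phi^{-1}\{x_0, \dots, x_{i-1}\} \cup \{x_{i+1}, \dots, x_{N-1}\},
\]
which sits inside both $\partial B_i$ and $\partial B_{i-1}$. I would check from the definition of the intermediate posets that the induced order on $Q$ agrees in both cases: relations among elements of $\phi^{-1}\{x_0,\dots,x_{i-1}\}$ are inherited from $\Gamma$ in either ambient poset, relations among elements of $\{x_{i+1},\dots,x_{N-1}\}$ are inherited from $B$ in either ambient poset, and a cross-relation $y\le x$ with $y\in\phi^{-1}\{x_0,\dots,x_{i-1}\}$ and $x\in\{x_{i+1},\dots,x_{N-1}\}$ is given in both posets by $\phi(y)\le x$ in $B$. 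The natural rank functions on $Q$ also agree in both cases, so each chain $C\subset Q$ contributes the same characteristic monomial $u_{\rho(C)}$ to $\Upsilon_{\partial B_i}$ and $\Upsilon_{\partial B_{i-1}}$.

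Consequently these contributions cancel in $\Upsilon_{\partial B_i} - \Upsilon_{\partial B_{i-1}}$, leaving precisely the two sums asserted in the lemma. There is no real obstacle; the only point to be careful about is the bookkeeping in verifying that the order relations and ranks on the common subposet $Q$ truly coincide in $B_i$ and $B_{i-1}$, but this is immediate from the definition of the intermediate posets and intermediate maps.
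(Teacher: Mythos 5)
Your proof is correct and takes the same approach as the paper, which states only that Lemma~\ref{lem:flagcount1} ``follows from definitions'' and omits the details. You have written out precisely the intended argument: partition the chains of $\partial B_i$ (resp.~$\partial B_{i-1}$) by whether they meet $\phi_i^{-1}(x_i)$ (resp.~contain $x_i$), and observe that the complementary chains live in the common subposet $Q=\phi^{-1}\{x_0,\dots,x_{i-1}\}\cup\{x_{i+1},\dots,x_{N-1}\}$ with identical induced order and rank function, so their contributions cancel.
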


\begin{lemma} \label{lem:flagcount2}
For $0<i<N$ we have the identity
	\[
		\ell^\Upsilon_{\Gamma_{x_i}} \cdot
		\Upsilon_{[x_i, \hi)} = \sum_{\substack{C \text{ a chain of } \partial B_i \\ C \text{ contains some element in }\phi_i^{-1}(x_i)}} u_{\rho(C)}
 \, \, \, \, \, - \sum_{\substack{C \text{ a chain of } \partial B_{i-1} \\ C \text{ contains }x_i}} u_{\rho(C)}.\]
\end{lemma}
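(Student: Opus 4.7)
The plan is to expand both sides as sums of monomials indexed by chains and match them via explicit bijections. Set $k=\rho(x_i)$; by the near-Eulerian criterion for poset maps (Proposition~\ref{p:nearcri}), $\Gamma_{x_i}$ is near-Eulerian of rank $k$ with $\partial\Gamma_{x_i}=\phi^{-1}[\ho,x_i)$, and its rank-$k$ elements are exactly those of $\phi^{-1}(x_i)$.

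First I would rewrite $\ell^\Upsilon_{\Gamma_{x_i}}$ in a form suited to chain counting. Applying the substitution $a\mapsto a+b$ to Lemma~\ref{l:localcd} gives $\ell^\Upsilon_{\Gamma_{x_i}}=\Upsilon_{\ses \Gamma_{x_i}}-\Upsilon_{\partial\Gamma_{x_i}}\cdot(a+2b)$, which together with the identity $\Upsilon_{\ses \Gamma_{x_i}}=\Upsilon_{\Gamma_{x_i}}+\Upsilon_{\partial\Gamma_{x_i}}\cdot b$ from the proof of Lemma~\ref{l:localcd} yields the cleaner expression $\ell^\Upsilon_{\Gamma_{x_i}}=\Upsilon_{\Gamma_{x_i}}-\Upsilon_{\partial\Gamma_{x_i}}\cdot(a+b)$. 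Because chains of $\Gamma_{x_i}$ avoiding rank $k$ are exactly chains of $\partial\Gamma_{x_i}$ (and thereby contribute $\Upsilon_{\partial\Gamma_{x_i}}\cdot a$), this rearranges to
\[\ell^\Upsilon_{\Gamma_{x_i}}=\Bigl(\sum_{\substack{D\text{ a chain of }\Gamma_{x_i}\\ D\cap\phi^{-1}(x_i)\ne\emptyset}}u_{\rho(D)}\Bigr)-\Upsilon_{\partial\Gamma_{x_i}}\cdot b.\]
Multiplying by $\Upsilon_{[x_i,\hi)}$ produces exactly two terms, which I aim to match to the two sums on the right of the lemma.

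The core step is a bijection. A chain $C$ of $\partial B_i$ meeting $\phi_i^{-1}(x_i)=\phi^{-1}(x_i)$ contains a unique rank-$k$ element $y\in\phi^{-1}(x_i)$ (uniqueness since $\Gamma_{x_i}$ has rank $k$ and its rank-$k$ elements lie in $\phi^{-1}(x_i)$), and splits as $C=D\cup C_2$ with $D:=\{c\in C:c\le y\}$ a chain of $\Gamma_{x_i}$ with top element $y$, and $C_2:=\{c\in C:c>y\}$. Unravelling the definition of the order on $B_i$ shows that the elements of $B_i$ strictly above $y$ are precisely those of $(x_i,\hi]$, so $E:=\{x_i\}\cup C_2$ is a chain of $[x_i,\hi)$. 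This gives a bijection between such $C$ and pairs $(D,E)$ where $D$ is a chain of $\Gamma_{x_i}$ with top element in $\phi^{-1}(x_i)$ and $E$ is a chain of $[x_i,\hi)$, matching the first summand. An entirely analogous bijection, applied in $\partial B_{i-1}$, decomposes each chain $C$ containing $x_i$ as $C_1\cup\{x_i\}\cup C_2$ with $C_1$ a chain of $\partial\Gamma_{x_i}$ and $\{x_i\}\cup C_2$ a chain of $[x_i,\hi)$; here the middle element $x_i$ has rank $k$ and accounts for the extra factor $b$, identifying $\Upsilon_{\partial\Gamma_{x_i}}\cdot b\cdot\Upsilon_{[x_i,\hi)}$ with the second sum.

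The main obstacle is careful bookkeeping of rank functions: one must verify that the natural rank function of $[x_i,\hi)$ (placing $x_i$ at rank $0$) shifts by exactly $k$ to agree with the $B_i$- and $B_{i-1}$-ranks of elements of $(x_i,\hi)$, so that the polynomial concatenation $u_{\rho(D)}\cdot u_{\rho(E)}$ correctly encodes the rank set of the concatenated chain as a subset of $\{1,\dots,n-1\}$. Once the rewriting of $\ell^\Upsilon_{\Gamma_{x_i}}$ and these two chain-splitting bijections are in hand, the lemma follows by subtraction.
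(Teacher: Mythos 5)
Your overall strategy matches the paper's: rewrite $\ell^\Upsilon_{\Gamma_{x_i}}$ as a difference of chain sums and then build a bijection between concatenations and chains of $\partial B_i$ and $\partial B_{i-1}$. However, the core bijection as you state it has a genuine gap.

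You assert that $\phi^{-1}(x_i)$ consists exactly of the rank-$k$ elements of $\Gamma_{x_i}$, and consequently that any chain $C$ of $\partial B_i$ meeting $\phi^{-1}(x_i)$ must contain a rank-$k$ element of $\phi^{-1}(x_i)$. Both claims are false in general. While every rank-$k$ element of $\Gamma_{x_i}$ does lie in $\phi^{-1}(x_i)$ (rank-increasing plus strong surjectivity), the preimage $\phi^{-1}(x_i)=\Gamma_{x_i}\setminus\partial\Gamma_{x_i}$ may contain elements of lower rank. For a concrete counterexample, take $B=\cB_4$ (the tetrahedron on vertices $\{1,2,3,4\}$) and $\Gamma$ the stellar subdivision at the midpoint $5$ of the edge $\{3,4\}$, with $x_i=\{1,3,4\}$ of rank $k=3$; then $\phi^{-1}(x_i)=\{\{1,5\},\{1,3,5\},\{1,4,5\}\}$ contains the rank-$2$ element $\{1,5\}$, and the chain $C=\{\ho,\{1\},\{1,5\}\}$ in $\partial B_i$ meets $\phi^{-1}(x_i)$ without containing any rank-$3$ element of it. Your splitting recipe $D:=\{c\in C:c\le y\}$ with $y$ the ``rank-$k$ element'' is therefore undefined on such chains, so the bijection does not cover the whole left-hand sum. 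The same misconception also infects your statement that ``the elements of $B_i$ strictly above $y$ are precisely those of $(x_i,\hi]$'': that holds only when $y$ is maximal in $\Gamma_{x_i}$; for a lower-rank $y$ there are also elements of $\phi^{-1}(x_i)$ above $y$.

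The fix is small and brings you exactly in line with what the paper does implicitly: define $y:=\max\bigl(C\cap\phi^{-1}(x_i)\bigr)$, of whatever rank. Then $D=\{c\in C:c\le y\}$ is still a chain of $\Gamma_{x_i}$ with top element in $\phi^{-1}(x_i)$, and the key observation becomes a statement about $C$ rather than all of $B_i$: every element of $C$ strictly above $y$ is not in $\phi^{-1}(x_i)$ (by maximality of $y$), and an element of $B_i$ above some $z\in\phi^{-1}(x_i)$ is either in $\phi^{-1}(x_i)$ or in $(x_i,\hi]$, so every element of $C_2=\{c\in C:c>y\}$ lies in $(x_i,\hi)$. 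With this correction the bijection is well defined, matches your rewritten $\ell^\Upsilon_{\Gamma_{x_i}}=\sum_{D\cap\phi^{-1}(x_i)\ne\emptyset}u_{\rho(D)}-\Upsilon_{\partial\Gamma_{x_i}}\cdot b$ term by term, and the rest of your argument (including the second, analogous bijection in $\partial B_{i-1}$, and the bookkeeping of rank sets under concatenation) goes through.
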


\begin{proof}
Note that the interval $[\ho, x_i]$ is considered to be in $B_{i-1}$.
	We first observe
	\begin{align*}
		\ell^\Upsilon_{\Gamma_{x_i}} & = \Upsilon_{\Gamma_{x_i}} - \Upsilon_{[\ho, x_i]}                                                   \\
		& = \sum_{C \text{ a chain of } \Gamma_{x_i}} u_{\rho(C)} 
		 \, \, \, \, \, 
		 - \sum_{C \text{ a chain of } [\ho, x_i]} u_{\rho(C)} \\
		& = \sum_{\substack{C = C_1 < y \text{ a chain of } \Gamma_{x_i}                                       \\ \phi_i(y) = x_i}} u_{\rho(C)}  
		\, \, \, \, \, 
		-
		\sum_{C=C_1 < x_i \text{ a chain of } [\ho, x_i]} u_{\rho(C)}.
	\end{align*}
	where the second equality follows from the definition of the local flag enumerator and the last equality follows from canceling chains that are present in both terms, namely the chains contained entirely in $\partial \Gamma_x$ in the first sum and chains whose maximal element is not $x_i$ in the second sum. 
	This is possible, since by Proposition~\ref{p:nearcri}, 
	we have $\partial \phi_i^{-1}[\ho, x_i] = \phi_i^{-1}[\ho, x_i)$. 
	Multiplying $\Upsilon_{[x_i, \hi)}$ gives the result, since every chain counted non-trivially in the difference of flag enumerators is the concatenation of a chain counted in $\ell^\Upsilon_{\Gamma_{x_i}}$ and a chain in $[x_i, \hi)$.
\end{proof}

Similarly we can extend the result to poset maps between near-Eulerian posets.

\begin{cor}
	Let $\sigma\colon  \Pi\to \Gamma$ be a strong formal subdivision of rank $0$ between near-Eulerian posets. Then we have the identity
	\[\Phi_{\Pi} =\sum_{x\in \Gamma} \ell^\Phi_{\Pi_x} \cdot \Phi_{[x, \infty)}. \qedhere
	\]
\end{cor}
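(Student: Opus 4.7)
The plan is to reduce the corollary to Theorem~\ref{thm:decomp} by extending $\sigma$ to strong formal subdivisions between Eulerian posets via semisuspension, and then assembling the resulting identities.

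By Proposition~\ref{p:sfsses}, $\sigma$ extends to a strong formal subdivision $\tsigma\colon \ov{\ses{\Pi}}\to \ov{\ses{\Gamma}}$ of rank $0$ between Eulerian posets, and $\sigma^{-1}(\partial\Gamma)=\partial\Pi$. Since $\partial\Gamma$ is a lower order ideal of $\Gamma$, the restriction $\sigma|_{\partial\Pi}\colon \partial\Pi\to\partial\Gamma$ is itself a strong formal subdivision, which extends further to $\ov{\partial\Pi}\to\ov{\partial\Gamma}$ by Proposition~\ref{p:extension}. Applying Theorem~\ref{thm:decomp} to this boundary extension---using $\ell^\Phi_{\ov{\partial\Pi}}=0$, $(\ov{\partial\Pi})_x=\Pi_x$ for $x\in\partial\Gamma$, and that $[x,\hi]_{\ov{\partial\Gamma}}=\ov{\partial[x,\infty)_\Gamma}$---produces
\begin{equation*}
\Phi_{\partial\Pi}=\sum_{x\in\partial\Gamma}\ell^\Phi_{\Pi_x}\cdot\Phi_{\partial[x,\infty)_\Gamma}.
\end{equation*}

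Next, I apply Theorem~\ref{thm:decomp} to $\tsigma$ itself. The summand at $x=\hi_{\ov{\ses\Gamma}}$ vanishes since $\ov{\ses\Pi}$ is Eulerian (Proposition~\ref{p:localcdvanishing}), and the summand at $x=q_\Gamma$ also vanishes because $\tsigma^{-1}[\ho,q_\Gamma]=\partial\Pi\cup\{q_\Pi\}$ is canonically isomorphic to the Eulerian poset $\ov{\partial\Pi}$, so its local $cd$-index is zero. For $x\in\Gamma$, $\tsigma^{-1}[\ho,x]=\Pi_x$; moreover, the interval $[x,\hi]_{\ov{\ses\Gamma}}$ equals $\ov{[x,\infty)_\Gamma}$ when $x\notin\partial\Gamma$, while for $x\in\partial\Gamma$ it is precisely the associated Eulerian poset of the near-Eulerian $[x,\infty)_\Gamma$. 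Thus $\Phi_{[x,\hi]_{\ov{\ses\Gamma}}}=\Phi_{[x,\infty)_\Gamma}$ in the first case, and $\Phi_{[x,\hi]_{\ov{\ses\Gamma}}}=\Phi_{\ses[x,\infty)_\Gamma}=\ell^\Phi_{[x,\infty)_\Gamma}+\Phi_{\partial[x,\infty)_\Gamma}\cdot c$ in the second (by Lemma~\ref{l:localcd}). Collecting these contributions yields
\[
\Phi_{\ses\Pi}=\sum_{x\in\partial\Gamma}\ell^\Phi_{\Pi_x}\ell^\Phi_{[x,\infty)_\Gamma}+\Bigl(\sum_{x\in\partial\Gamma}\ell^\Phi_{\Pi_x}\Phi_{\partial[x,\infty)_\Gamma}\Bigr)\cdot c+\sum_{x\in\Gamma\setminus\partial\Gamma}\ell^\Phi_{\Pi_x}\Phi_{[x,\infty)_\Gamma}.
\]

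To conclude, I substitute the boundary identity into the middle summand, turning it into $\Phi_{\partial\Pi}\cdot c$, and use Lemma~\ref{l:localcd} to identify $\Phi_{\ses\Pi}-\Phi_{\partial\Pi}\cdot c$ on the left as $\ell^\Phi_\Pi$. Adding $\Phi_{\partial\Pi}$ back via the boundary identity and combining $\ell^\Phi_{[x,\infty)_\Gamma}+\Phi_{\partial[x,\infty)_\Gamma}=\Phi_{[x,\infty)_\Gamma}$ for $x\in\partial\Gamma$ (Definition~\ref{def:localcdfornear}) merges the two partial sums over $\partial\Gamma$ into $\sum_{x\in\partial\Gamma}\ell^\Phi_{\Pi_x}\Phi_{[x,\infty)_\Gamma}$, giving the desired formula. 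The main obstacle is the careful bookkeeping between near-Eulerian and Eulerian intervals inside $\ov{\ses\Gamma}$; the crucial input making the $x=q_\Gamma$ term drop out is the observation that $\tsigma^{-1}[\ho,q_\Gamma]\cong\ov{\partial\Pi}$ is Eulerian.
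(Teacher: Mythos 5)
Your proof is correct, but it takes a genuinely different route from the paper's. The paper's own argument is quite terse: it extends $\sigma$ over $\ov{\ses{\Gamma}}$, factors the extension into intermediate maps, restricts, and re-runs the telescoping argument (Lemmas~\ref{lem:flagcount1} and \ref{lem:flagcount2}) directly in the near-Eulerian setting. You instead treat Theorem~\ref{thm:decomp} as a black box and apply it twice -- once to $\tsigma\colon\ov{\ses{\Pi}}\to\ov{\ses{\Gamma}}$ and once to the boundary extension $\ov{\partial\Pi}\to\ov{\partial\Gamma}$ -- then assemble the answer through careful bookkeeping of which intervals in $\ov{\ses\Gamma}$ are Eulerian, near-Eulerian, or boundaries thereof. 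All the ingredients you invoke are in place: $\sigma^{-1}(\partial\Gamma)=\partial\Pi$ and $\tsigma^{-1}[\ho,q_\Gamma]\cong\ov{\partial\Pi}$ from Proposition~\ref{p:sfsses}, the identification $[x,\infty)\cap\partial\Gamma=\partial[x,\infty)$ (used by the paper in Proposition~\ref{p:localdecomp}), and Corollary~\ref{p:smallernear} to classify the intervals $[x,\infty)_\Gamma$. A nice feature of your route is that the intermediate identity $\ell^\Phi_\Pi=\sum_{x\in\partial\Gamma}\ell^\Phi_{\Pi_x}\ell^\Phi_{[x,\infty)}+\sum_{x\notin\partial\Gamma}\ell^\Phi_{\Pi_x}\Phi_{[x,\infty)}$, obtained just before you add $\Phi_{\partial\Pi}$ back, is precisely the paper's Proposition~\ref{p:localdecomp}; your argument therefore derives that decomposition for free as a by-product. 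The trade-off is that your approach requires more explicit tracking of the adjoined elements $q_\Gamma$, $q_\Pi$, $\hi$, whereas the paper's re-use of the telescoping machinery hides this in the intermediate-map framework already set up for the main theorem.
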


\begin{proof}
By considering the extension of $\sigma$ 
over $\ov{ \ses{\Gamma}}$, we factorize the extension map $\wt{\sigma}$ into intermediate maps. 
By taking restrictions, we get a factorization of $\sigma$ into intermediate maps.
Hence by Lemma~\ref{lem:flagcount1} and Lemma~\ref{lem:flagcount2}
we prove ~\eqref{eq:telescopebreakdown}. \qedhere
\end{proof}

\begin{remark}
The decomposition theorem holds for subdivisions of positive rank if for any Eulerian poset $B$, we define the local $cd$-index for $\partial B$ by setting $\ell^\Phi_{\partial B} := \Phi_{\partial B}$.
\end{remark}

Note that our results do not require the posets to be Cohen-Macaulay, which was needed in Ehrenborg--Karu's proof of the decomposition theorem for the $cd$-index.

Here is a decomposition result for the local $cd$-index. Note that this is analogous to the decomposition result for the local $h$-polynomial in \cite[Lemma~4.7]{katz16}.

\begin{prop}\label{p:localdecomp}
Let $\sigma\colon \Pi\rightarrow \Gamma$ be strong formal subdivisions of rank $0$ between near-Eulerian posets.
Then the local $cd$-index is given by
\[\ell^\Phi_{\Pi} =  \sum_{y\in \partial \Gamma} \ell^\Phi_{\Pi_y} \cdot 
\ell^\Phi_{[y, \infty)}
+ \sum_{y\in \notin \partial \Gamma} \ell^\Phi_{\Pi_y} \cdot \Phi_{[y, \infty)}.
\]
\end{prop}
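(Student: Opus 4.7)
The plan is to derive the formula by computing $\Phi_{\Pi}$ and $\Phi_{\partial \Pi}$ separately as weighted sums indexed by $\Gamma$, and then use $\ell^\Phi_{\Pi} = \Phi_{\Pi} - \Phi_{\partial \Pi}$ from Definition~\ref{def:localcdfornear}. The formula for $\Phi_{\Pi}$ is already in hand: the corollary of Theorem~\ref{thm:decomp} for maps between near-Eulerian posets, applied to $\sigma$, gives
\[ \Phi_{\Pi} = \sum_{y \in \Gamma} \ell^\Phi_{\Pi_y} \cdot \Phi_{[y, \infty)}. \]

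For $\Phi_{\partial \Pi}$ I would pass to the boundary. By Proposition~\ref{p:sfsses}, $\sigma^{-1}(\partial \Gamma) = \partial \Pi$, so $\sigma$ restricts to a strong formal subdivision $\partial \Pi \to \partial \Gamma$. Since $\partial \Gamma$ sits as a proper lower order ideal in the Eulerian poset $\ov{\partial \Gamma}$, Definition~\ref{def:extension} and Proposition~\ref{p:extension} produce an extension $\tsigma \colon \ov{\partial \Pi} \to \ov{\partial \Gamma}$ which is a strong formal subdivision of rank $0$ between Eulerian posets of rank $n$. Applying Theorem~\ref{thm:decomp} to $\tsigma$, and using $\Phi_{\ov{\partial \Pi}} = \Phi_{\partial \Pi}$ (since $\Phi_P = \Phi_{\partial P}$ for Eulerian $P$ of positive rank), yields a sum over $x \in \ov{\partial \Gamma}$. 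The $x = \hi$ summand vanishes by Proposition~\ref{p:localcdvanishing}; for $x \in \partial \Gamma$ one identifies $\ov{\partial \Pi}_x = \Pi_x$ (because $\sigma^{-1}(\partial \Gamma) = \partial \Pi$) and identifies the interval $[x, \hi]$ in $\ov{\partial \Gamma}$ with $\ov{\partial [x, \infty)}$ inside $\ov{\ses \Gamma}$ (using Corollary~\ref{p:smallernear}, which guarantees $[x, \infty) \subset \Gamma$ is near-Eulerian so that $\partial [x,\infty)$ is defined). This produces
\[ \Phi_{\partial \Pi} = \sum_{x \in \partial \Gamma} \ell^\Phi_{\Pi_x} \cdot \Phi_{\partial [x, \infty)}. \]

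Subtracting and splitting the first sum along $\Gamma = \partial \Gamma \sqcup (\Gamma \setminus \partial \Gamma)$, the terms indexed by $y \in \partial \Gamma$ combine via
\[ \Phi_{[y, \infty)} - \Phi_{\partial [y, \infty)} = \ell^\Phi_{[y, \infty)} \]
(again Definition~\ref{def:localcdfornear}), while the terms indexed by $y \notin \partial \Gamma$ pass through unchanged, giving the claimed formula. The main technical obstacle is the second paragraph: one must verify that the extension $\tsigma$ really is a strong formal subdivision to which Theorem~\ref{thm:decomp} applies, and carefully match the upper interval $[x, \hi] \subset \ov{\partial \Gamma}$ with the boundary poset $\ov{\partial [x, \infty)} \subset \ov{\ses \Gamma}$ so that the two decompositions can be subtracted term by term. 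Once these identifications are in place, the remaining algebra is routine.
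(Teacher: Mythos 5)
Your proposal is correct and follows essentially the same route as the paper: write $\ell^\Phi_\Pi = \Phi_\Pi - \Phi_{\ov{\partial\Pi}}$, decompose $\Phi_\Pi$ via the corollary of Theorem~\ref{thm:decomp} applied to $\sigma$, decompose $\Phi_{\ov{\partial\Pi}}$ via Theorem~\ref{thm:decomp} applied to the extension $\wt{\sigma|_{\partial\Pi}}\colon\ov{\partial\Pi}\to\ov{\partial\Gamma}$ (using Propositions~\ref{p:sfsses} and~\ref{p:extension}), identify $[x,\hi]\subset\ov{\partial\Gamma}$ with $\ov{\partial[x,\infty)}$ via $[x,\infty)\cap\partial\Gamma=\partial[x,\infty)$, and subtract. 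The only cosmetic difference is that you note explicitly that the $x=\hi$ term vanishes by Proposition~\ref{p:localcdvanishing} and that $(\ov{\partial\Pi})_x = \Pi_x$ for $x\in\partial\Gamma$, steps the paper leaves implicit; otherwise the argument matches.
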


Recall by Corollary~\ref{p:smallernear}, the interval $[y, \infty)$ in $\Gamma$ is the boundary of an Eulerian poset if $y\notin \partial \Gamma$ and is near-Eulerian if $y\in \partial \Gamma$.

\begin{proof}
The local $cd$-index $\ell^\Phi_{\Pi}$ is given by
$
\ell^\Phi_{\Pi} = 
\Phi_{\Pi} - \Phi_{\ov{\partial \Pi}}.
$
By Proposition~\ref{p:sfsses}, the strong formal subdivision $\sigma\colon \Pi\rightarrow \Gamma$ restricts to 
$\sigma|_{\partial \Pi}\colon \partial \Pi \to \partial \Gamma$. Thus the extension over $\ov{\partial \Gamma}$ gives 
$\wt{\sigma|_{\partial \Pi}}\colon \ov{\partial \Pi} \to \ov{\partial \Gamma}$ as a strong formal subdivision.
By applying the decomposition theorem to the strong formal subdivisions
$\sigma \colon \Pi\to \Gamma$ and $\wt{\sigma|_{\partial \Pi}} \colon {\ov{\partial \Pi}}
\to {\ov{\partial \Gamma}}$, 
we have
\begin{align*}
\Phi_{\Pi} - \Phi_{\ov{\partial \Pi}}
& =
\sum_{y\in \Gamma}\ell^\Phi_{\Gamma_y} \cdot \Phi_{[y, \infty)}
- 
\sum_{y\in \partial \Gamma} \ell^\Phi_{\Gamma_y} \cdot 
\Phi_{\ov{\partial [y, \infty)}}\\
& = \sum_{y\in \partial \Gamma} \ell^\Phi_{\Gamma_y} 
\cdot (\Phi_{[y, \infty)} - \Phi_{\ov{\partial [y, \infty)}})
+
\sum_{y\notin \partial \Gamma}
\ell^\Phi_{\Gamma_y} \cdot \Phi_{[y, \infty)}\\
& = \sum_{y\in \partial \Gamma} \ell^\Phi_{\Gamma_y} 
\cdot \ell^\Phi_{[y, \infty)}
+
\sum_{y\notin \partial \Gamma}
\ell^\Phi_{\Gamma_y} \cdot \Phi_{[y, \infty)}.
\end{align*}
Note that we use the fact that for $x\in \partial \Gamma$, we have $[x, \infty)\cap \partial \Gamma = \partial [x, \infty)$.
\end{proof}

In particular if we consider the composition of strong formal subdivisions $\sigma \colon \Pi \to \Gamma$ and $\phi \colon \Gamma \to B$, both of rank $0$, then for any $x\in B$ the restriction $\sigma_x \colon \Pi_x = \sigma^{-1}(\Gamma_x)\to \Gamma_x$ is a strong formal subdivision of rank $0$ between near-Eulerian posets, and the local $cd$-index of $\Pi_x$ is given by 
\[
\ell^\Phi_{\Pi_x} = 
\sum_{y\in \partial \Gamma_x} \ell^\Phi_{\Pi_y} \cdot \ell^\Phi_{\Gamma_{(\geq y)_x}}
+ \sum_{y\in \notin \partial \Gamma} \ell^\Phi_{\Pi_y} \cdot \Phi_{\Gamma_{(\geq y)_x}}.\]

\section{Properties of the mixed \texorpdfstring{$cd$}{cd}-index}\label{sec:mixcd}

In this section we define the mixed $cd$-index, an invariant of subdivisions. 

We introduce a new variable $e$ of degree $-1$ and let $R^e_{\Psi_{}}$ be the quotient of $\bK\langle a, b, e\rangle$ by the two-sided ideal generated by $e^2$, $ea$, $eb$, $ae$ and $be$. This is an algebra whose underlying vector space is $R_{\Psi_{}} \oplus \bK e$. Similarly, let $R^e_\Phi$ be the subalgebra of $R^e_{\Psi_{}}$ that has $R_\Phi \oplus \bK e$ as its underlying vector space.

We define $R'_{\Phi}$, the primed analogue of $R_\Phi$ by
$R'_{\Phi} := \bK \langle c', d'\rangle \subset \bK \langle a', b'\rangle$ with $c' = a'+ b'$ and $d' = a'b' + b' a'$. 
The \emph{mixed vector space} $R_{\Omega}:= R'_{\Phi} \otimes_{\bK} R^e_\Phi$ is the tensor product of two vector spaces. 
By abuse of notation we write $w' \cdot w$ for the element $w'\otimes w$ where $w' \in R'_{\Phi}$ is a primed monomial and $w\in R^e_\Phi$ is an unprimed monomial. 
We have a natural degree map on $R_{\Omega}$, where $\deg(a) = \deg(b) = \deg(a') = \deg(b') = 1$ and $\deg(e)  = -1$.

For the remainder of the paper, 
we refine the definition of the $cd$-index by setting the $cd$-index of the singleton poset to be $e\in R^e_\Phi$. Note that we recover the original $cd$-index by the specialization $e\mapsto 1$.

\begin{defn} Let $\phi\colon \Gamma\rightarrow B$ be a strong formal subdivision of rank $0$, where $\Gamma$ is lower Eulerian and $B$ is Eulerian.
The \emph{mixed $cd$-index} of $\phi\colon \Gamma\rightarrow B$ is an element in $R_{\Omega}$ given by 
	\[\Omega_{\phi}(c',d',c,d, e) = \sum_{x \in B} \ell^\Phi_{
			\Gamma_x}(c',d') \cdot \Phi_{[x, \hi]}(c,d, e). \]
\end{defn}

Note that the mixed $cd$-index is homogeneous of total degree $\rk(B) - 1$.

If both $\Gamma$ and $B$ are singleton posets, then the map $\phi\colon \Gamma\to B$ is an isomorphism, and the mixed $cd$-index is given by $\Omega_{\phi} = \ell^\Phi_{\{\ho\}} \cdot e = e$.

\begin{lemma}
	Under the specialization $c'\mapsto c$, $d'\mapsto d$ and $e\mapsto 1$, the mixed $cd$-index $\Omega_{\phi}$ specializes to the $cd$-index of $\Gamma$.
\end{lemma}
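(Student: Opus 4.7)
The plan is to show that the specialization is essentially a direct application of Theorem~\ref{thm:decomp}. First I would substitute $c' \mapsto c$, $d' \mapsto d$, $e \mapsto 1$ into the definition
\[\Omega_{\phi}(c',d',c,d, e) = \sum_{x \in B} \ell^\Phi_{\Gamma_x}(c',d') \cdot \Phi_{[x, \hi]}(c,d, e)\]
and examine each factor separately. The local $cd$-index $\ell^\Phi_{\Gamma_x}$ is by construction a polynomial in two noncommuting variables alone, with no $e$ appearing, so the specialization $c' \mapsto c$, $d' \mapsto d$ simply relabels it as $\ell^\Phi_{\Gamma_x}(c,d) \in R_\Phi$.

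Next I would explain that $\Phi_{[x, \hi]}(c,d,e)\big|_{e = 1}$ coincides with the original (unrefined) $cd$-index $\Phi_{[x, \hi]}(c,d)$ of the interval. The only place where the refined convention introduced at the start of Section~\ref{sec:mixcd} differs from the original is the singleton poset, whose refined $cd$-index is $e$; this arises exactly once in the sum, at $x = \hi$, where $[\hi,\hi] = \{\ho\}$. Setting $e = 1$ returns this term to $1$, as the paper notes immediately after the refinement. For every $x \in B$ the specialized factor therefore agrees with the $cd$-index used in the statement of the decomposition theorem.

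Having matched each summand, I would finish by applying Theorem~\ref{thm:decomp} to obtain
\[\sum_{x \in B} \ell^\Phi_{\Gamma_x}(c,d) \cdot \Phi_{[x, \hi]}(c,d) = \Phi_{\Gamma}(c,d),\]
which is the $cd$-index of $\Gamma$. The argument is essentially bookkeeping with no substantive obstacle; the only subtlety worth flagging is the treatment of the $x = \hi$ summand, where the refined convention is relevant and which was precisely the reason for introducing the $e$ variable.
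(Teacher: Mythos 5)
Your proof is correct and follows the same route as the paper: substitute the variables, observe that only the $x=\hi$ singleton term is affected by the $e\mapsto 1$ specialization, and invoke Theorem~\ref{thm:decomp}. The paper's version states the conclusion more tersely but the argument is the same.
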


\begin{proof}
Under the specialization we have
\[\Omega_{\phi}(c,d,c,d, 1)=
\sum_{x \in B} \ell^\Phi_{
			\Gamma_x}(c,d) \cdot \Phi_{[x, \hi]}(c,d, 1),
\]
which gives the $cd$-index of $\Gamma$ by the decomposition theorem.
\end{proof}

\begin{lemma}\label{l:beginning}
	Under the specialization $c'\mapsto 0$, $d'\mapsto 0$ and $e\mapsto 1$, the mixed $cd$-index $\Omega_{\phi}$ specializes to the $cd$-index of $B$.
\end{lemma}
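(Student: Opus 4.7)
The plan is to show that under the specialization $c' \mapsto 0$, $d' \mapsto 0$, $e \mapsto 1$, essentially only one summand in
\[
\Omega_{\phi}(c',d',c,d,e) = \sum_{x \in B} \ell^\Phi_{\Gamma_x}(c',d') \cdot \Phi_{[x,\hi]}(c,d,e)
\]
survives, namely the one indexed by $x = \ho_B$. The key observation is that setting $c',d' \mapsto 0$ annihilates every local $cd$-index except those of degree zero.

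First, I would handle the generic terms. For any $x \in B$ with $\rho(x) > 0$, the near-Eulerian criterion for poset maps (Proposition~\ref{p:nearcri}) says that $\Gamma_x = \phi^{-1}[\ho_B,x]$ is a near-Eulerian poset of rank $\rho(x) \geq 1$. The local $ab$-index $\ell^\Psi_{\Gamma_x}(a',b') = \Psi_{\ses{\Gamma_x}}(a',b') - \Psi_{\partial \Gamma_x}(a',b')\cdot(a'+b')$ is homogeneous of degree $\rho(x)$ in $a',b'$, so $\ell^\Phi_{\Gamma_x}(c',d')$ is homogeneous of positive degree in $c'$ and $d'$ (with $\deg c' = 1$, $\deg d' = 2$). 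Consequently, $\ell^\Phi_{\Gamma_x}(0,0) = 0$ for every $x \neq \ho_B$.

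Next, I would handle the remaining term. For $x = \ho_B$, surjectivity and rank-preservation of $\phi$ (which has rank $0$) force $\Gamma_{\ho_B} = \phi^{-1}(\{\ho_B\}) = \{\ho_\Gamma\}$, the singleton poset. By the convention immediately following Lemma~\ref{l:localcd}, $\ell^\Phi_{\{\ho\}} = 1$, while the companion factor is $\Phi_{[\ho_B,\hi_B]}(c,d,e) = \Phi_B(c,d,e)$. Combining the two observations,
\[
\Omega_{\phi}(0,0,c,d,e) = 1 \cdot \Phi_B(c,d,e) = \Phi_B(c,d,e).
\]
Finally, applying $e \mapsto 1$ recovers the original $cd$-index of $B$: for $B$ of positive rank, $\Phi_B$ already lies in $\bK\langle c,d\rangle$ and the specialization acts trivially, while in the degenerate case where $B$ is the singleton, $\Phi_B = e \mapsto 1$, which is indeed its $cd$-index under the original normalization. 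There is no real obstacle here; the only point to verify carefully is the homogeneity of the local $cd$-index in positive degree, which follows immediately from its construction in Lemma~\ref{l:localcd}.
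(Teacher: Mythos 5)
Your proof is correct and follows the same approach as the paper's one-line computation; you simply make explicit the two observations the paper leaves implicit, namely that $\ell^\Phi_{\Gamma_x}(c',d')$ is homogeneous of positive degree for $x\neq \ho_B$ (hence killed by $c',d'\mapsto 0$) and that $\Gamma_{\ho_B}$ is the singleton with $\ell^\Phi_{\{\ho\}}=1$. The only minor slip is the phrase ``rank-preservation''; $\phi$ is rank-increasing, but combined with $\rk(\phi)=0$ the conclusion $\phi^{-1}(\ho_B)=\{\ho_\Gamma\}$ still holds exactly as you use it.
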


\begin{proof}
Under the specialization we have
\[\Omega_{\phi}(0,0,c,d, 1)=
\sum_{x \in B} \ell^\Phi_{
			\Gamma_x}(0,0) \cdot \Phi_{[x, \hi]}(c,d, 1) = 1\cdot \Phi_{[\ho, \hi]}(c,d),
\]
which gives the $cd$-index of $B$.
\end{proof}

There is an important structural property of the mixed $cd$-index, which we call the \emph{comodule property}. It is phrased in terms of the coalgebra structure of $R^e_\Phi$ and the comodule structure of $R_{\Omega}$.

Let us recall the definition of a coalgebra.
Fix a ground field $\bK$ of characteristic $0$. 
A triple $(C, \Delta, \epsilon)$ is a \emph{coalgebra} over $\bK$ if the following conditions are satisfied.
\begin{enumerate}
\item $C$ is a vector space over $\bK$;

\item $\Delta\colon C\to C\otimes C$ is a linear map satisfying the \emph{coassociativity condition} $(\Delta\otimes \id) \circ \Delta = (\id \otimes \Delta) \circ \Delta$; and

\item $\epsilon\colon C\to \bK$ is a linear map satisfying the \emph{counit condition} $(\epsilon \otimes \id) \circ \Delta = \id = (\id \otimes \epsilon) \circ \Delta$.
\end{enumerate}
We call $\Delta$ and $\epsilon$ the \emph{coproduct} and the \emph{counit} of $C$.

We present a variation of Ehrenborg and Fox's coalgebra structure on $R^e_{\Psi_{}}$ \cite{ehrenborg03}.
	Let $\Delta\colon R^e_{\Psi_{}} \rightarrow R^e_{\Psi_{}} \otimes R^e_{\Psi_{}}$ be the linear map defined by sending a monomial $w = w_1 \cdots w_n$ where each $w_i$ is a letter to 
		\[
	\Delta (w) = 
	\begin{cases}
	e\otimes w + \sum_{i=1}^n w_1\cdots w_{i-1} \otimes w_{i+1}\cdots w_n
	 + w\otimes e & \text{if } w\neq e\\
	e\otimes e & \text{if } w = e.
	\end{cases}\]
	Let $\epsilon \colon  R^e_{\Psi_{}} \rightarrow \bK$ be the characteristic function of $e\in R^e_{\Psi_{}}$. The following is a straight forward verification.

\begin{prop}
The triple $(R^e_{\Psi_{}}, \Delta, \epsilon)$ is a coalgebra over $\bK$.
\end{prop}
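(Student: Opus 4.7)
The plan is to verify the three coalgebra axioms (coassociativity, left counit, right counit) by direct computation on the natural basis of $R^e_\Psi$, namely the words in $a,b$ (including the empty word $1$) together with the element $e$. The degenerate case $w = e$ is handled separately: since $\Delta(e) = e \otimes e$, both $(\Delta\otimes\id)\Delta(e)$ and $(\id\otimes\Delta)\Delta(e)$ equal $e\otimes e\otimes e$, and $(\epsilon\otimes\id)\Delta(e) = \epsilon(e)\cdot e = e$, with the symmetric identity on the right.

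For the counit identities applied to a word $w \neq e$ (including $w=1$), I would observe that $\epsilon$ vanishes on every basis element other than $e$. Applying $(\epsilon\otimes\id)$ to
\[
\Delta(w) \;=\; e\otimes w \;+\; \sum_{i=1}^n w_1\cdots w_{i-1}\otimes w_{i+1}\cdots w_n \;+\; w\otimes e,
\]
only the first term contributes (even when $i=1$ gives $w_1\cdots w_{i-1} = 1$, we still have $\epsilon(1)=0$), yielding $w$. The right counit follows by symmetry.

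For coassociativity, I would expand both $(\Delta\otimes\id)\Delta(w)$ and $(\id\otimes\Delta)\Delta(w)$ for a non-$e$ word $w = w_1\cdots w_n$ (with the convention that the empty sub-word equals $1$ and that $\Delta(1) = e\otimes 1 + 1\otimes e$), and sort the resulting rank-three tensors into seven shapes: $e\otimes e\otimes w$, $w\otimes e\otimes e$, $e\otimes w\otimes e$, the ``$e$ at left boundary'' terms $e\otimes w_1\cdots w_{i-1}\otimes w_{i+1}\cdots w_n$, the symmetric ``$e$ at right boundary'' terms, the ``$e$ in the middle'' terms $w_1\cdots w_{i-1}\otimes e\otimes w_{i+1}\cdots w_n$, and the purely word-valued triple splits $w_1\cdots w_{i-1}\otimes w_{i+1}\cdots w_{j-1}\otimes w_{j+1}\cdots w_n$ indexed by pairs $1\le i<j\le n$. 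A direct bookkeeping check shows both sides produce each of these terms exactly once, so the sums agree.

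The only real obstacle is combinatorial bookkeeping: several terms arise from applying $\Delta$ to an \emph{empty} sub-word (the $i=1$ or $i=n$ extremes), and in these boundary cases one must recall that $\Delta(1) = e\otimes 1 + 1\otimes e$ rather than $e\otimes e$, which is what produces the ``$e$ in the middle'' terms of shape $1\otimes e\otimes w_2\cdots w_n$ and $w_1\cdots w_{n-1}\otimes e\otimes 1$ on each side. Once this convention is applied consistently, the two expansions match term-for-term and coassociativity follows.
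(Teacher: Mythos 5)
Your proof is correct, and it is exactly the ``straightforward verification'' the paper alludes to without writing out; the paper gives no further detail, so your argument is the intended one. You also correctly flag the one genuine subtlety, namely that the empty word $1$ is a monomial distinct from $e$, so that $\epsilon(1)=0$ and $\Delta(1)=e\otimes 1 + 1\otimes e$ (not $e\otimes e$), which is precisely what makes the boundary cases $i=1$ and $i=n$ of the coassociativity bookkeeping come out right.
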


Note that $\Delta(c) = e\otimes c + 2 (1\otimes 1) + c\otimes e$ and $\Delta(d) = e\otimes d + c\otimes 1 + 1\otimes c + d\otimes e$. Hence the coproduct $\Delta$ restricts to $\Delta|_{R^e_\Phi}\colon R^e_\Phi \to R^e_\Phi \otimes R^e_\Phi$ and the triple $(R^e_\Phi, \Delta, \epsilon)$ is a coalgebra.

Ehrenborg and Readdy proved the following result in \cite{ehrenborg98}.

\begin{prop}\label{p:cdindexismorphism}
For any Eulerian poset $B$ we have
\[
\Delta (\Phi_{B}) = \sum_{\ho\leq x\leq \hi} \Phi_{[\ho, x]}
		\otimes \Phi_{[x, \hi]}.\]
\end{prop}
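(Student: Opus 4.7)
My plan is to reduce the identity to an analogous statement for the flag $h$-polynomial $\Psi_{\partial B}$ of the boundary $\partial B = B \setminus \{\hi\}$. For an Eulerian poset $B$ of positive rank, $\Phi_B = \Phi_{\partial B}$, and $\Phi_{\partial B}$ coincides with $\Psi_{\partial B}$ as an element of $R^e_\Psi$ via the inclusion $R^e_\Phi \hookrightarrow R^e_\Psi$ induced by $c = a+b$, $d = ab+ba$. Since the coproduct restricts from $R^e_\Psi$ to $R^e_\Phi$, the proposition is equivalent to
\begin{equation*}
\Delta(\Psi_{\partial B}) = e \otimes \Psi_{\partial B} + \sum_{\ho < x < \hi} \Psi_{\partial [\ho, x]} \otimes \Psi_{\partial [x, \hi]} + \Psi_{\partial B} \otimes e.
\end{equation*}
The two outer summands correspond to the $x = \ho$ and $x = \hi$ terms of the statement via the singleton convention $\Phi_{\{v\}} = e$. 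The reduced identity depends only on $B$ being graded with $\ho$ and $\hi$, and I would prove it by matching coefficients.

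Expand $\Psi_{\partial B} = \sum_{S \subseteq [n-1]} \beta_{\partial B}(S)\, u_S$ and apply $\Delta$ monomial-wise. The terms $e \otimes u_S$ and $u_S \otimes e$ visibly assemble into $e \otimes \Psi_{\partial B} + \Psi_{\partial B} \otimes e$. For the interior contributions, group by the split position $i \in [1, n-1]$ and a prefix-suffix pair $(S_1, S_2)$ with $S_1 \subseteq [1, i-1]$, $S_2 \subseteq [i+1, n-1]$: exactly two subsets $S$ yield the monomial $u_{S_1} \otimes u_{S_2 - i}$, namely $S = S_1 \cup S_2$ and $S = S_1 \cup \{i\} \cup S_2$. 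Hence its coefficient in the interior of $\Delta(\Psi_{\partial B})$ is $\beta_{\partial B}(S_1 \cup S_2) + \beta_{\partial B}(S_1 \cup \{i\} \cup S_2)$, and the matching coefficient in $\sum_{\rho(x) = i} \Psi_{\partial [\ho, x]} \otimes \Psi_{\partial [x, \hi]}$ is $\sum_{\rho(x)=i} \beta_{\partial [\ho, x]}(S_1) \cdot \beta_{\partial [x, \hi]}(S_2 - i)$.

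The proof thus reduces to the coefficient identity
\begin{equation*}
\beta_{\partial B}(S_1 \cup S_2) + \beta_{\partial B}(S_1 \cup \{i\} \cup S_2) = \sum_{\rho(x) = i} \beta_{\partial [\ho, x]}(S_1)\, \beta_{\partial [x, \hi]}(S_2 - i),
\end{equation*}
which I would obtain by M\"obius-inverting the transparent flag $f$-vector identity
\begin{equation*}
\alpha_{\partial B}(T \cup \{i\}) = \sum_{\rho(x) = i} \alpha_{\partial [\ho, x]}(T \cap [1, i-1])\, \alpha_{\partial [x, \hi]}((T \cap [i+1, n-1]) - i).
\end{equation*}
The $\alpha$-identity simply records the unique decomposition of a chain in $\partial B$ containing a rank-$i$ element $x$ into a chain of $\partial [\ho, x]$ and a chain of $\partial [x, \hi]$ glued at $x$. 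The main obstacle is verifying that the alternating signs in the M\"obius inversion telescope to produce exactly the two-term combination on the left --- this reflects the fact that $\Delta$'s deletion of the $i$-th letter is insensitive to whether the chain had an element at rank $i$ --- so the sign cancellation over the ``$i$-containing'' summands matches perfectly with the ``$i$-free'' summands and their sign shifts.
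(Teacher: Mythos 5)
The paper does not prove this proposition; it cites Ehrenborg--Readdy \cite{ehrenborg98}, so there is no in-paper proof to compare against. Your argument is correct and self-contained, and it follows the standard route (reduce to the $ab$-level, match coefficients of $u_{S_1}\otimes u_{S_2'}$, and obtain the $\beta$-identity from the obvious chain-splitting $\alpha$-identity by inclusion--exclusion). The one step you flagged as the ``main obstacle'' does indeed close cleanly: expanding $\beta_{\partial B}(S\cup\{i\})$ and separating the sum over $T\subseteq S\cup\{i\}$ according to whether $i\in T$ gives
\[
\beta_{\partial B}(S\cup\{i\}) = -\beta_{\partial B}(S) + \sum_{T\subseteq S}(-1)^{|S\setminus T|}\,\alpha_{\partial B}(T\cup\{i\}),
\]
so that $\beta_{\partial B}(S)+\beta_{\partial B}(S\cup\{i\}) = \sum_{T\subseteq S}(-1)^{|S\setminus T|}\alpha_{\partial B}(T\cup\{i\})$, and substituting the multiplicative $\alpha$-identity then factors the sum over $T=T_1\cup T_2$ into the product $\sum_{\rho(x)=i}\beta_{\partial[\ho,x]}(S_1)\,\beta_{\partial[x,\hi]}(S_2')$. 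Your observation that the reduced identity holds for any graded poset with $\ho$ and $\hi$ (Eulerian-ness only enters to guarantee $cd$-expressibility) is also correct and matches how the result is usually stated.
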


\begin{remark}
Let $\cP$ be the vector space over $\bK$ generated by isomorphism classes of Eulerian posets. Let $\Delta\colon \cP \to \cP \otimes \cP$ be the linear map defined by sending an Eulerian poset $B$ to 
		\[
	\Delta (B) = 
	\begin{cases}
	[\ho, \ho]\otimes B + \sum_{\ho <x< \hi} \, [\ho, x]\otimes [x, \hi]
	 + B\otimes [\hi, \hi] & \text{if } \rk(B)\neq 0\\
	[\ho, \ho]\otimes [\hi, \hi] & \text{if } \rk(B) = 0.
	\end{cases}\]
Let $\epsilon$ the characteristic function of the singleton poset. Then, the triple $(\cP, \Delta, \epsilon)$ forms a coalgebra.
Proposition~\ref{p:cdindexismorphism} is equivalent to saying that the $cd$-index is a coalgebra morphism from $(\cP, \Delta, \epsilon)$ to $(R^e_\Phi, \Delta, \epsilon)$.
\end{remark}


Let us recall the definition of a comodule. 
Suppose $(C, \Delta, \epsilon)$ is a coalgebra over $\bK$. A pair $(M, \varrho)$ is a \emph{comodule} over $C$ if the following conditions are satisfied.
\begin{enumerate}
\item $M$ is a vector space over $\bK$;

\item $\varrho\colon M\to M\otimes C$ is a linear map satisfying the \emph{coassociativity condition} $(\id \otimes \Delta) \circ \varrho = (\varrho\otimes \id) \circ \varrho $; and

\item $\epsilon\colon C\to \bK$ from the coalgebra $(C, \Delta, \epsilon)$ satisfies the \emph{counit condition} $(\id \otimes \epsilon)\circ\varrho = \id$.
\end{enumerate}
We call $\varrho$ the \emph{comultiplication} of $M$.

We define a comodule structure on $R_{\Omega}$ as follows. 
Note that the $\bK$-vector space $R_{\Omega}$ is a left $R'_\Phi$-module. 
Hence the tensor product $R_{\Omega}\otimes R^e_\Phi$ is naturally a left $R'_\Phi$-module, where the scalar multiplication is defined by $w'\cdot (w_1 \otimes w_2) = (w' w_1)\otimes w_2$ for $w'\in R'_\Phi$ and $w_1\otimes w_2 \in R_{\Omega}\otimes R^e_\Phi$.
Now we define the linear map $\varrho\colon R_{\Omega} \rightarrow R_{\Omega} \otimes R^e_\Phi$ as follows:
For a primed monomial $w'$ and an unprimed monomial $w$, 
we set $\varrho(w' \cdot w) = w' \cdot \Delta (w)$ by considering $\Delta(w)$ an element of $R_{\Omega}\otimes R^e_\Phi$, and extend linearly.

\begin{prop}
The pair $(R_{\Omega}, \varrho)$ is a comodule over the coalgebra $(R^e_\Phi, \Delta, \epsilon)$.
\end{prop}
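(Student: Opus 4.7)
The plan is to observe that, under the canonical identification $R_\Omega \otimes_\bK R^e_\Phi \cong R'_\Phi \otimes_\bK R^e_\Phi \otimes_\bK R^e_\Phi$, the comultiplication $\varrho$ is precisely $\id_{R'_\Phi} \otimes \Delta$. Both the coassociativity of $\varrho$ and its counit axiom then reduce to the coassociativity and counit axioms already proved for $(R^e_\Phi, \Delta, \epsilon)$, simply tensored on the left with the identity on $R'_\Phi$. So the proof is essentially a naturality/bookkeeping argument.

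Concretely, I would carry out the verification on a pure tensor $w' \cdot w$ with $w'$ a primed monomial and $w$ an unprimed monomial, and then extend linearly. Writing $\Delta(w) = \sum w_{(1)} \otimes w_{(2)}$ in Sweedler notation, the definition gives $\varrho(w' \cdot w) = \sum (w' \cdot w_{(1)}) \otimes w_{(2)}$. For coassociativity, I would compute
\[
(\id \otimes \Delta) \circ \varrho(w' \cdot w) = \sum (w' \cdot w_{(1)}) \otimes \Delta(w_{(2)}) = w' \cdot \bigl((\id \otimes \Delta)\circ \Delta\bigr)(w),
\]
and similarly
\[
(\varrho \otimes \id) \circ \varrho(w' \cdot w) = \sum (w' \cdot \Delta(w_{(1)})) \otimes w_{(2)} = w' \cdot \bigl((\Delta \otimes \id)\circ \Delta\bigr)(w).
\]
Equality of the two then follows immediately from the already-established coassociativity of $\Delta$ on $R^e_\Phi$.

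For the counit condition, I would compute
\[
(\id \otimes \epsilon) \circ \varrho(w' \cdot w) = w' \cdot \bigl((\id \otimes \epsilon)\circ \Delta\bigr)(w) = w' \cdot w,
\]
using the counit axiom $(\id \otimes \epsilon)\circ \Delta = \id$ on $R^e_\Phi$. Since both identities hold on the spanning set of pure tensors $w' \cdot w$, they extend by linearity to all of $R_\Omega$.

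There is really no obstacle here; the main thing to be careful about is the identification of $R_\Omega \otimes R^e_\Phi$ with $R'_\Phi \otimes R^e_\Phi \otimes R^e_\Phi$ and the fact that the left $R'_\Phi$-action on $R_\Omega \otimes R^e_\Phi$ commutes by construction with the operators $\id \otimes \Delta$ and $\id \otimes \epsilon$ that are applied to the $R^e_\Phi$ factors. Once that bookkeeping is in place, the proof is a one-line reduction to the coalgebra axioms for $(R^e_\Phi, \Delta, \epsilon)$.
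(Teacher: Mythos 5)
Your proposal is correct and is essentially the same argument the paper has in mind: the paper's proof is the one-line remark ``this follows from verifying definitions,'' and your observation that under the identification $R_\Omega \otimes R^e_\Phi \cong R'_\Phi \otimes R^e_\Phi \otimes R^e_\Phi$ the map $\varrho$ becomes $\id_{R'_\Phi} \otimes \Delta$ is precisely the clean way to carry out that verification, reducing both comodule axioms to the already-established coalgebra axioms for $(R^e_\Phi, \Delta, \epsilon)$.
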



\begin{proof}
This follows from verifying definitions.
\end{proof}

We prove the \emph{comodule property} of the mixed $cd$-index. 

\begin{theorem} \label{t:selfre}
Let $\phi\colon \Gamma\rightarrow B$ be a strong formal subdivision of rank $0$. Then 
	$$ \varrho(\Omega_{\phi})
	= \sum_{x\in B} \Omega_{\phi_x}
	\otimes \Phi_{[x,\hi]},$$
	where $\phi_x\colon \Gamma_x\to [\ho, x]$ is the restriction of $\phi$ to $\Gamma_x$.
\end{theorem}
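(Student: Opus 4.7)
The plan is to directly unwind the definition of $\varrho$, apply the Ehrenborg--Readdy coproduct formula (Proposition~\ref{p:cdindexismorphism}) to the $cd$-index of each upper interval $[x,\hi]$, and then reindex the resulting double sum so that the inner sum becomes $\Omega_{\phi_z}$. Since $\varrho$ was defined to act on $w'\cdot w$ by leaving the primed part alone and applying $\Delta$ to the unprimed part, by linearity we immediately obtain
\[
\varrho(\Omega_{\phi}) = \sum_{x \in B} \ell^\Phi_{\Gamma_x}(c',d') \cdot \Delta\bigl(\Phi_{[x,\hi]}(c,d,e)\bigr).
\]
This is the easy first move; no combinatorial content is used yet.

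Next, I would invoke the fact that for each $x\in B$ the interval $[x,\hi]$ is an Eulerian poset (since $B$ is), so Proposition~\ref{p:cdindexismorphism} gives
\[
\Delta\bigl(\Phi_{[x,\hi]}\bigr) = \sum_{x\leq z\leq \hi} \Phi_{[x,z]} \otimes \Phi_{[z,\hi]},
\]
including the degenerate cases $z=x$ and $z=\hi$ where the corresponding $cd$-indices are $e$. Substituting this into the previous formula and swapping the order of summation produces
\[
\varrho(\Omega_{\phi}) = \sum_{z\in B} \left( \sum_{x\leq z} \ell^\Phi_{\Gamma_x}(c',d') \cdot \Phi_{[x,z]}(c,d,e) \right) \otimes \Phi_{[z,\hi]}(c,d,e).
\]

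The final and most delicate step is recognizing the inner sum as $\Omega_{\phi_z}$. By definition $\phi_z\colon \Gamma_z\to [\ho,z]$ is the restriction of $\phi$, and
\[
\Omega_{\phi_z} = \sum_{x\in [\ho,z]} \ell^\Phi_{(\Gamma_z)_x}(c',d') \cdot \Phi_{[x,z]}(c,d,e).
\]
The key observation, straightforward to verify from the definitions, is that $(\Gamma_z)_x = \Gamma_x$ whenever $x\leq z$: both equal $\{y\in\Gamma : \phi(y)\leq x\}$, because $\phi(y)\leq x\leq z$ forces $y\in \Gamma_z$ automatically. This identification matches the inner sum term-by-term with $\Omega_{\phi_z}$, completing the proof.

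I expect no substantial obstacle: the Ehrenborg--Readdy morphism property does the heavy lifting, and the compatibility of preimages with the intervals $[\ho,z]\subseteq B$ is a basic feature of the restriction $\phi_z$. The only thing to watch carefully is handling the boundary cases $x=z$ and $z=\hi$ (where the relevant $cd$-indices are just $e$), so that the reindexing includes all terms of $\Omega_{\phi_z}$ exactly once.
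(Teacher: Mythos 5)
Your proof is correct and follows essentially the same route as the paper: apply $\varrho$ termwise, invoke the Ehrenborg--Readdy coproduct formula for each Eulerian interval $[x,\hi]$, swap the order of summation, and recognize the inner sum as $\Omega_{\phi_z}$. Your explicit check that $(\Gamma_z)_x = \Gamma_x$ for $x\leq z$ is a small but welcome detail the paper leaves implicit.
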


\begin{proof}
By direct computation, we have
	\begin{align*}
\varrho(\Omega_{\phi}) 
		& =  \sum_{x\in B} \ell^\Phi_{\Gamma_x}(c', d') \cdot \Delta(\Phi_{[x, \hi]}(c, d, e))\\
		& =  \sum_{x\in B}  \ell^\Phi_{\Gamma_x}(c', d') \cdot \left(\sum_{y\geq x} \Phi_{[x,y]}(c, d, e) \otimes \Phi_{[y, \hi]}(c, d, e)\right)\\
		& =  \sum_{y\in B} \left(\sum_{x\leq y} \ell^\Phi_{\Gamma_x}(c', d') \cdot \Phi_{[x,y]}(c, d, e)\right) \otimes \Phi_{[y, \hi]}(c, d, e)\\		
		& =  \sum_{y\in B} \Omega_{\phi_y}
		\otimes \Phi_{[y, \hi]}(c, d, e).	
	\end{align*}
	By replacing $y$ with $x$, we get the desired expression.
\end{proof}

\begin{remark}
There is a subdivision analogue of the observation that the $cd$-index is a coalgebra homomorphism from $(\cP, \Delta, \epsilon)$ to $(R^e_\Phi, \Delta, \epsilon)$. We may consider all isomorphism classes of strong formal subdivisions, and let $\cS$ be the $\bK$-vector space generated by them. Then $\cS$ forms a comodule over $\cP$ by the map
\[\varrho\colon \cS\to\cS\otimes \cP\]
given by 
\[\varrho\left(\phi\colon \Gamma\to B\right)=\sum_{x\in B} \phi_x\otimes [x,\hi].\]
Then, the content of the above theorem is that $\Omega$ is a homomorphism of comodules.
\end{remark}

\section{The mixed \texorpdfstring{$cd$}{cd}-index and the mixed \texorpdfstring{$h$}{h}-polynomial}\label{sec:hpolys}

In this section, we review the mixed $h$-polynomial \cite{katz16} and show that it is determined by the mixed $cd$-index.

\subsection{The mixed \texorpdfstring{$h$}{h}-polynomial} 
Let $\mathbb{Z}[t,u]$ be the two-variable polynomial ring. We have an involution on $\mathbb{Z}[t,u]$ given by interchanging $t$ and $u$: for $f(t,u)\in \mathbb{Z}[t,u]$, $\overline{f}(t,u)=f(u,t)$. 
Note that involution preserves total degree of monomials. We will consider $\mathbb{Z}[t,u]$ as a ring graded by total degree. Here, our use of $\mathbb{Z}[t,u]$ differs from that of \cite{katz16} where $\mathbb{Z}[t]$ is used instead. The earlier definition arises by specializing $u$ to $1$.

\begin{defn} Let $B$ be an Eulerian poset of rank $n$. We will define the \emph{{$g$-polynomial}} $g(B;t,u)$ to be a particular element of $\mathbb{Z}[t,u]$ of total degree $n$. If $n= 0$, then $g(B;t,u)=1$. For $n>0$, then $g(B;t,u)$ is the unique polynomial of $t$-degree strictly less than $n/2$ satisfying
	\[\overline{g}(B;t,u)=\sum_{x\in B} g([\widehat{0},x];t,u) \cdot (t-u)^{n-\rk([\ho, x])}.\]
\end{defn}

Note that $g$-polynomial has total degree $n$.
For example, for the Boolean algebra $\cB_n$, we have $g(\cB_n; t, u) = u^n$.

\begin{defn} Let $\Gamma$ be a lower Eulerian poset of rank $n$. The \emph{$h$-polynomial} of $\Gamma$ is given by
	\[\overline{h}(\Gamma;t,u)=\sum_{x\in \Gamma} g([\widehat{0},x];t,u) \cdot (t-u)^{n-\rk([\ho, x])}.\]
\end{defn}

Note that the $h$-polynomial has total degree $n$.
If we further assume $\Gamma$ is Eulerian, then we have $h(\Gamma; t, u) = g(\Gamma; t, u)$. 

Suppose $B$ is a rank $n$ Eulerian poset. Then $B\setminus\{\hi\}$ is a rank $n-1$ lower Eulerian poset and its $h$-polynomial is palindromic:
\[
h(B\setminus \{\hi\}; t, u) = \ov{h}(B\setminus \{\hi\}; t, u).
\]
Furthermore we have
\begin{equation}\label{eq:charofh}
(u-t) \cdot h(B\setminus\{\hi\};t,u)=g(B;t,u)-\overline{g}(B;t,u).
\end{equation}
Since $g(B; t, u)$ is defined to be the polynomial of $t$-degree strictly less than $\rk(B)/2$, this formula characterizes the $g$-polynomial. Thus we can compute the $g$-polynomial of an Eulerian poset from 
the $h$-polynomial of its boundary.

\begin{defn} \label{d:localpoly}
	Let $\phi\colon  \Gamma \rightarrow B$ be a strong formal subdivision  between a lower Eulerian poset $\Gamma$ and an Eulerian poset $B$.
	The \emph{local $h$-polynomial} $\ell^h_B(\Gamma;t) \in \mathbb{Z}[t,u]$ is defined by
	\[
		\ell^h_B(\Gamma;t,u) = \sum_{ x \in B } h(\Gamma_x;t,u) \cdot (-1)^{\rk([x, \hi])} g([x,\widehat{1}_B]^*;t,u),
	\]
	where $[x,\widehat{1}_B]^*$ is the \emph{dual poset} of $[x,\widehat{1}_B]$ that has the same underlying set but relations reversed.
\end{defn}

Note that $\ell^h_B(\Gamma; t, u)$ has total degree $\rk(\Gamma)$, and it is an invariant of subdivisions.
Furthermore, we have
\begin{equation}
\label{e:hdecomp}
	h(\Gamma;t,u) = \sum_{ x \in B }  \ell^h_{[\ho_B,x]}(\Gamma_x;t,u) \cdot g([x,\hi_B];t,u).
\end{equation}

\begin{defn}\label{d:mixedpoly}
	Let $\phi\colon  \Gamma \rightarrow B$ be a strong formal subdivision  between a lower  Eulerian poset $\Gamma$ and an Eulerian poset $B$.
	Then the  \emph{mixed $h$-polynomial} $h_B(\Gamma;t',u',t,u) \in \mathbb{Z}[t,t',u,u']$ is defined by
	\[
		h_B(\Gamma;t',u',t,u) =  \sum_{x \in B } (t'u')^{\rk(\Gamma_x)} \cdot 
		\ell^h_{[\ho_B,x]}(\Gamma_x;t/t',u/u') \cdot g([x,\hi_B];tt',uu').
	\]
\end{defn}

Note that $h_B(\Gamma;t',u',t,u)$ is a polynomial since the local $h$-polynomial of $\Gamma_x$ has total degree $\rk(\Gamma_x)$.
The total $(t,u)$-degree and the total $(t',u')$-degree are both equal to the rank of $\Gamma$. By specializing $u$ and $u'$ to $1$ and renaming $t$ to $u$ and $t'$ to $v$, we obtain the mixed $h$-polynomial as in \cite{katz16}

\subsection{Linear maps}

In this subsection we discuss the linear maps defined in \cite{bayer00} that send the $cd$-index to the $g$- and the $h$-polynomials.

Let $\nabla\colon \mathbb{Z}[t,u] \otimes \mathbb{Z}[t,u] \to \mathbb{Z}[t,u]$ be the linear map defined by $\nabla(v\otimes w) = v \cdot w$. We call $\nabla$ the multiplication map.

We define a linear map $\kappa \colon  R^e_{\Psi_{}} \rightarrow \bK[t, u]$ by 
\[\kappa(w) = \begin{cases}
(t-u)^n & \text{if } w= a^n\\
0 & \text{otherwise.}
\end{cases}
\]

For example, we have $\kappa(e) = 0$, $\kappa(1) = 1$ and $\kappa(a) = t-u$. 
We use $\kappa$, the coproduct $\Delta$ and the multiplication $\nabla$ to construct the following linear maps.

\begin{defn}
We define linear maps $F, G\colon  R^e_{\Psi_{}} \rightarrow \bK[t, u]$
recursively as follows:
\begin{enumerate}
\item Let $F(e) =0$ and $G(e) = 1$.

\item For a monomial $w\neq e$, let $F(w)$ be given by 
\[
\ov{F(w)} = (\nabla \circ (G \otimes \kappa) \circ \Delta) (w).\]

\item For a monomial $w \neq e$ of degree $n$, let $G(w)$ to be the sum of monomials in $(u-t) F(w)$ of $t$-degree strictly less than $(n+1)/2$.
\end{enumerate}
\end{defn}

Note that we have $\deg(F(w)) = \deg(w)$ and $\deg(G(w)) = \deg(w) +1$ for a monomial $w$.

For a $cd$-monomial $w$, the image $F(w)$ is palindromic \cite{bayer00}:
\[F(w)= \ov{F(w)}.
\]
Thus it follows from the definition that 
\begin{equation}
(u-t)F(w) = G(w)-\ov{G(w)}.
\end{equation}
This is analogous to \eqref{eq:charofh}, the characterization of the $g$-polynomial.

Here are the images of the $cd$-monomials of degree at most $4$ under the linear maps $F$ and $G$.
\vspace{-2mm}
{\renewcommand{\arraystretch}{1.1}
\begin{table}[H]
\centering
\begin{tabu} to 0.8\textwidth { X[c]| X[c] | X[c]  }
$w$ & $F(w)$ & $G(w)$ \\ \hline
$e$ & $0$ & $1$\\
$1$ & $1$ & $u$\\
$c$ & $u+t$ & $u^2$ \\
$c^2$ & $u^2+t^2$ & $u^3 - t u^2$\\
$d$ & $tu $ & $t u^2$\\
$c^3$ & $u^3 - tu^2 - t^2u + t^3$ & $u^4 - 2t u^3$\\
$cd$ & $0$ & $0$\\
$d c$ & $t u^2  + t^2u $ & $t u^3$\\
$c^4$ & $u^4 - 2t u^3 - 2t^3 u +t^4$ & $u^5 - 3 t u^4 + 2t^2 u^3$\\
$c^2 d$ & $-t^2 u^2 $ & $-t^2 u^3$\\
$c d c$ & $0$ & $0$\\
$d c^2$ & $t u^3+t^3 u $ & $t u^4 -t^2u^3$\\
$d^2$ & $t^2 u^2$ & $t^2 u^3$\\
\end{tabu}
\vspace{-4mm}
\end{table}}

There are explicit formulas for $F$ and $G$ \cite{bayer00}.
Let $p(n, k)= \binom{n}{k} - \binom{n}{k-1}$.
For any non-negative integer $n$ we define polynomials in $\mathbb{Z}[t, u]$ as follows:
	{\renewcommand{\arraystretch}{1.4}
	\begin{center}
$\begin{tabu}{l c l }
Q_n(t, u) & := & \sum_{k=0}^{\floor{(n-1)/2}} (-1)^k p(n-1, k) t^k u^{n-1-k}; \\\relax
R_n(t, u) & := & Q_n(u, t); \\\relax
T_n(t, u) & := & \begin{cases}(-1)^{(n-1)/2} p(n-1, (n-1)/2) \ (tu)^{(n-1)/2} & \text{if } n \text{ is odd}\\
0 & \text{otherwise}
\end{cases}
 \relax
\end{tabu}$	\end{center}}

The polynomials are fundamental building blocks of the formulas for $F$ and $G$. In fact, we have 
$F(c^k) = t R_k + u Q_k$, 
$G(c^k )=u Q_{k+1}$, 
$F(c^k d) = tu T_{k+1}$ and $G(c^k d) = t u^2 T_{k+1}$.

\begin{prop}[\cite{bayer00}]\label{p:valuesofFandG}
The maps $F$ and $G$ take the following values on $cd$-monomials: 
\[F(c^{k_1} d c^{k_2} d \cdots d c^{k_r} d c^k)
= (tu)^r \cdot (t R_k + u Q_k) \cdot \prod_{j=1}^r T_{k_j+1} 
\]
and
\[G(c^{k_1} d c^{k_2} d \cdots d c^{k_r} d c^k)
= (tu)^r \cdot (u Q_{k+1}) \cdot \prod_{j=1}^r T_{k_j+1},
\]
where we set $t R_0 + u Q_0 =1 $.
\end{prop}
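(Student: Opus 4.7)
My plan is to prove both formulas simultaneously by induction on the total $ab$-length $n$ of $M = c^{k_1} d c^{k_2} \cdots d c^{k_r} d c^k$. The base case $M = 1$ (i.e.\ $r=0$, $k=0$) follows directly from the definitions of $F$ and $G$ together with the conventions $tR_0 + uQ_0 := 1$ and $Q_1 = 1$. The case $r=0$ with $k\geq 1$ reduces to showing $F(c^k) = tR_k + uQ_k$ by a secondary induction on $k$: expanding $c^k = (a+b)^k$ gives
\[
\Delta(c^k) = e \otimes c^k + 2\sum_{i=1}^k c^{i-1} \otimes c^{k-i} + c^k \otimes e,
\]
and combined with $\kappa(c^j) = (t-u)^j$ this produces a recursion for $F(c^k)$ that matches $tR_k + uQ_k$ via standard ballot-number manipulations; $G(c^k) = uQ_{k+1}$ then follows from its recursive definition.

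For the inductive step with $r\geq 1$, I would analyze $\Delta(M)$ by expanding $M$ as an $ab$-polynomial and tracking contributions from each $ab$-position $i$. The key observation is that $\kappa$ vanishes on any $ab$-monomial containing a $b$-letter, so a position $i$ contributes to $(G\otimes \kappa)\Delta(M)$ only when the suffix $w_{[i+1,n]}$ can be chosen to be a pure power of $a$. Since any $d$-block contributes at least one $b$ to any suffix that covers it in full, only positions lying inside the final $c^k$ block or inside the final $d$ can survive. A careful case analysis yields three surviving contributions to $\Delta'(M)$: positions within $c^k$ give $2\sum_{\ell=1}^k (c^{k_1}d\cdots d c^{k_r} d c^{\ell-1})\otimes c^{k-\ell}$; the first letter of the final $d$ gives $(c^{k_1} d\cdots d c^{k_r})\otimes c^{k+1}$, after summing over the two expansions $d\in\{ab,ba\}$; and the second letter of the final $d$ gives $(c^{k_1}d\cdots d c^{k_r+1})\otimes c^{k}$. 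On each left factor the inductive hypothesis supplies the value of $G$, while $\kappa(c^j)=(t-u)^j$ handles the right factor.

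Substituting and collecting, the desired identity $\overline{F(M)} = F(M) = (tu)^r (tR_k + uQ_k)\prod_{j=1}^r T_{k_j+1}$ reduces, after dividing by the common factor $(tu)^{r-1}\prod_{j<r} T_{k_j+1}$, to the polynomial identity
\[
tu\cdot (tR_k+uQ_k)\cdot T_{k_r+1} = 2u\cdot tu\cdot T_{k_r+1}\sum_{\ell=1}^{k} Q_\ell (t-u)^{k-\ell} + u\bigl(Q_{k_r+1}(t-u)^{k+1} + Q_{k_r+2}(t-u)^{k}\bigr).
\]
The formula for $G(M)$ then follows from $G(M) = [\text{low $t$-degree part of}](u-t)F(M)$ together with the explicit form of $Q_{k+1}$.

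The main obstacle is verifying this final polynomial identity among $Q_n$, $R_n$, and $T_n$. A key sub-identity is $(t-u)Q_{2m} + Q_{2m+1} = 0$, which follows from the Pascal-type relation $p(2m-1,k-1)+p(2m-1,k)=p(2m,k)$ combined with the symmetry $\binom{2m-1}{m-1} = \binom{2m-1}{m}$; this handles the case $k_r$ odd (where $T_{k_r+1}=0$ and both sides must vanish). The case $k_r$ even is more delicate, and I would verify it by comparing coefficients of $t^j u^{k+k_r+1-j}$ on both sides and expressing everything in terms of the ballot numbers $p(n,k)$, where the relevant identities again reduce to Pascal-type recurrences and the vanishing $\binom{2m-1}{m-1}-\binom{2m-1}{m}=0$.
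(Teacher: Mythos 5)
The paper does not prove this proposition; it is cited verbatim from Bayer--Ehrenborg \cite{bayer00}, so there is no ``paper's own proof'' to compare yours against. What I can do is assess your argument on its own merits and relate it to what \cite{bayer00} actually does.

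Your outline is sound. The key observation --- that $\kappa$ annihilates any tensor factor containing a $d$, so only splits inside the trailing $c^k$ block or across the last $d$ survive --- is correct, and your enumeration of the three surviving families of terms matches what one gets from the Ehrenborg--Readdy coproduct on $cd$-monomials ($\Delta(c)$ contributing $2\cdot 1\otimes 1$ and $\Delta(d)$ contributing $c\otimes 1 + 1\otimes c$). The induction is well-founded since $F$ and $G$ are computed jointly, $F$ from $G$ on strictly shorter words and $G$ from $F$ via the low-degree truncation of $(u-t)F$.

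One simplification you overlook would remove the ``more delicate'' $k_r$-even case entirely: your $r=0$ base case is precisely the identity
\[
tR_k + uQ_k \;=\; (t-u)^k + 2u\sum_{\ell=1}^{k} Q_\ell\,(t-u)^{k-\ell},
\]
and substituting this for $tR_k+uQ_k$ on the left of your target identity makes the double sum cancel verbatim with the sum on the right. What remains, after dividing off $u(t-u)^k$, is the single rank-$k_r$ statement
\[
t\,T_{k_r+1} \;=\; (t-u)\,Q_{k_r+1} + Q_{k_r+2},
\]
which holds for all $k_r\ge 0$ (both sides vanish when $k_r$ is odd, by the sub-identity $(t-u)Q_{2m}+Q_{2m+1}=0$ that you already isolate; when $k_r=2m$ both sides equal $(-1)^m p(2m,m)\,t^{m+1}u^m$). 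So the entire inductive step reduces to one low-degree ballot-number identity rather than a coefficient comparison at each $k$. Note also that \cite{bayer00} takes a different route, via the multiplicative factorization $F(u\,d\,v)=G(u\,d)\,F(v)$ and $G(u\,d\,v)=G(u\,d)\,G(v)$ (their Proposition~7.12, which this paper cites just after the statement you are proving); that approach peels off one $d$ at a time and avoids the coproduct bookkeeping, at the cost of first establishing those factorization identities. Your coproduct-based induction is more self-contained but requires the cancellation observation above to avoid an unpleasant coefficient check.
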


Since $T_n(t, u) = 0$ for even $n$, both $F$ and $G$ vanish if any of the $k_i$'s is odd.

\begin{prop}[\cite{bayer00}]\label{p:GF}
For a lower Eulerian poset $P$, we have $F(\Psi_{P}) = h(P)$. For an Eulerian poset $B$ we have $G(\Psi_{\partial B}) = g(B)$.
\end{prop}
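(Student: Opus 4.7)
The plan is to prove both identities by simultaneous induction on the rank $n$, with the two statements feeding into each other. The engine of the proof is a coproduct identity for the $ab$-index that exactly mirrors the recursive definition of the $h$-polynomial, after which the $G$-statement follows formally from the palindromicity characterization \eqref{eq:charofh} of $g$.

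The first task is to establish a coproduct formula for the $ab$-index of Ehrenborg--Readdy type,
\[ \Delta(\Psi_P) = \sum_{x \in P} \Psi_{[\ho, x]} \otimes \Psi_{[x, \infty)}, \]
valid for a graded poset $P$ with $\ho$ (with the analogous formula for posets that also contain $\hi$). This is purely a flag-counting identity: the coproduct $\Delta$ on a characteristic monomial $u_S$ of a flag $\ho < x_1 < \dots < x_k$ splits $u_S$ at each letter, and this splitting corresponds bijectively to splitting the flag at each of its elements $x_i$ into a flag in $[\ho, x_i]$ and a flag in $[x_i, \infty)$, with the $e \otimes w$ and $w \otimes e$ terms absorbing the $x = \ho$ and ``top'' contributions.

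For the first statement, let $P$ be lower Eulerian of rank $n$. Applying the recursive definition of $F$ and the coproduct formula gives
\[ \overline{F(\Psi_P)} = (\nabla \circ (G \otimes \kappa) \circ \Delta)(\Psi_P) = \sum_{x \in P} G(\Psi_{[\ho, x]}) \cdot \kappa(\Psi_{[x, \infty)}). \]
The map $\kappa$ picks out the $a^m$-coefficient, which for any $\Psi_Q$ of rank $m$ equals $\beta_Q(\emptyset) = 1$ (there is a unique flag of rank set $\emptyset$, namely $\{\ho\}$), so $\kappa(\Psi_{[x, \infty)}) = (t - u)^{n - \rk([\ho, x])}$. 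Each interval $[\ho, x]$ is Eulerian, and the convention $\Psi_{[\ho, x]} = \Psi_{\partial [\ho, x]} \cdot a$ combined with the inductive hypothesis for the $G$-statement gives $G(\Psi_{[\ho, x]}) = g([\ho, x]; t, u)$ (one checks from the definition of $G$ that the trailing $\cdot a$ is handled correctly via the coproduct recursion at rank below $n$). Substituting yields
\[ \overline{F(\Psi_P)} = \sum_{x \in P} g([\ho, x]; t, u)(t - u)^{n - \rk([\ho, x])} = \overline{h}(P; t, u), \]
which gives $F(\Psi_P) = h(P)$ after applying the involution $t \leftrightarrow u$.

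For the second statement, given an Eulerian poset $B$ of rank $n$, the boundary $\partial B$ is lower Eulerian of rank $n - 1$, so by the first statement $F(\Psi_{\partial B}) = h(\partial B; t, u)$. The defining property of $G$ says that $G(\Psi_{\partial B})$ is the sum of monomials in $(u - t) F(\Psi_{\partial B}) = (u - t) h(\partial B; t, u)$ of $t$-degree strictly less than $n/2$. By \eqref{eq:charofh} we have $(u - t) h(\partial B; t, u) = g(B; t, u) - \overline{g}(B; t, u)$, and since $g(B; t, u)$ is characterized as the piece of $t$-degree strictly less than $n/2$ in this difference, we conclude $G(\Psi_{\partial B}) = g(B; t, u)$.

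The main technical obstacle is pinning down the coproduct formula with the right conventions: one must track whether $\Psi_{[\ho, x]}$ carries its trailing $\cdot a$ and how the $e$-boundary terms in $\Delta$ match the $x = \ho$ and $x = \hi$ endpoints. Once the coproduct identity is set up so that it matches term-for-term with the recursion $\overline{h}(P) = \sum_x g([\ho, x]) (t - u)^{n - \rk([\ho, x])}$, the remainder of the argument is a direct comparison of the two recursions, which were engineered precisely to agree.
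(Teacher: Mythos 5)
The paper does not supply its own proof of this proposition—it is cited to Bayer--Ehrenborg~\cite{bayer00}—so I will evaluate your argument on its own merits. Your overall strategy (simultaneous induction on rank, a coproduct identity for the $ab$-index, the map $\kappa$ picking out the power of $a$, and the palindromicity characterization \eqref{eq:charofh} to go from $F$ to $G$) is exactly the Bayer--Ehrenborg route, and the second half of your argument (deducing $G(\Psi_{\partial B})=g(B)$ from $F(\Psi_{\partial B})=h(\partial B)$ via \eqref{eq:charofh}) is correct. The problem lies in the coproduct formula and in the step that is supposed to repair it.

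The coproduct identity you write, $\Delta(\Psi_P)=\sum_{x\in P}\Psi_{[\ho,x]}\otimes\Psi_{[x,\infty)}$, is false in this paper's conventions. The correct Bayer--Ehrenborg identity has $\Psi_{\partial[\ho,x]}$ in the first tensor factor (together with $e\otimes\Psi_P$ for the $x=\ho$ term), not $\Psi_{[\ho,x]}=\Psi_{\partial[\ho,x]}\cdot a$. You acknowledge the trailing $a$ and assert that ``the trailing $\cdot a$ is handled correctly,'' in effect claiming $G(\Psi_{[\ho,x]})=G(\Psi_{\partial[\ho,x]})=g([\ho,x])$. This is not true: already for $x$ of rank $1$ one has $\Psi_{[\ho,x]}=a$, and a direct computation from the definitions gives $F(a)=u$, hence $G(a)=u^2$, whereas $g([\ho,x])=g(\cB_1)=u$. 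So the step $G(\Psi_{[\ho,x]})=g([\ho,x])$ fails. Likewise the $x=\ho$ term in your sum would be $G(\Psi_{\{\ho\}})\kappa(\Psi_P)=G(1)(t-u)^n=u(t-u)^n$, whereas the recursion for $F$ produces $G(e)\kappa(\Psi_P)=(t-u)^n$; the paper's convention $\Phi_{\{\ho\}}=e$ and $\Psi_{\{\ho\}}=1$ are not interchangeable here since $G(e)=1\neq u=G(1)$. The fix is straightforward—state the coproduct as $\Delta(\Psi_P)=e\otimes\Psi_P+\sum_{\ho<x\in P}\Psi_{\partial[\ho,x]}\otimes\Psi_{[x,\infty)}+\Psi_P\otimes e$, after which $G(e)=1=g(\{\ho\})$, $G(\Psi_{\partial[\ho,x]})=g([\ho,x])$ by induction, and $\kappa(\Psi_{[x,\infty)})=(t-u)^{n-\rho(x)}$ produce exactly $\overline{h}(P)$—but as written the argument contains a genuine false step rather than a suppressed routine verification.
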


In particular for an Eulerian poset $B$, we have
$F(\Phi_{B}) = h(\partial B)$ and $G(\Phi_{B}) = g(B)$. Since $h(B) = g(B)$ we also have $F(\Phi_{B} \cdot a)  = F(\Psi_B)= G(\Phi_{B})$.

We define a linear map $G^* \colon  R^e_{\Psi_{}} \rightarrow \bK[t, u]$ by sending a monomial $w = w_1 \cdots w_n$ where each $w_i \in \{a, b, e\}$
to 
\[G^*(w_1 \cdots w_n) = (-1)^{n+1} G(w_n \cdots w_1). \]
For example we have $G^*(e) = 1$ and $G^*(1) = -u$. 

\subsection{Mixed maps}

In this subsection we define a linear map that sends the mixed $cd$-index to the mixed $h$-polynomial.

Let $\phi\colon \Gamma\to B$ be a strong formal subdivision of rank $0$, where $\Gamma$ is lower Eulerian and $B$ is Eulerian, both of rank $n$.

\begin{defn}
We define a linear map $H_\Omega$ from $R_{\Omega} \rightarrow \bK[t, u]$ as follows:
$$H_\Omega(w' \cdot w)= 
\begin{cases}
F(w'|_{c'\mapsto c, d'\mapsto d}) & \text{if } w=e \\
G(w'|_{c'\mapsto c, d'\mapsto d} \cdot w) & \text{otherwise,}\\
\end{cases}
$$
where $w'$ is an unprimed monomial and $w$ is a primed monomial.
\end{defn}

For example we have $H_\Omega(c' e) = F(c)$ and $H_\Omega(c' c) = G(c^2)$. Note that $\deg(H_\Omega(w)) = \deg(w)+ 1$ for any monomial $w$.

\begin{prop}\label{p:H} The map $H_\Omega$ takes the mixed $cd$-index to the $h$-polynomial in the following sense:
 \[H_\Omega(\Omega_{\phi}) = h(\Gamma).\]
\end{prop}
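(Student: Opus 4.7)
The plan is to directly compute $H_\Omega(\Omega_\phi)$ from the definition by isolating the unique term involving $e$ (namely $x = \hi$), and reducing everything else to the Bayer--Ehrenborg identities for $F$ and $G$. Since $\Phi_{[\hi,\hi]} = e$ while $\Phi_{[x,\hi]}$ lies in $\bK\langle c,d\rangle$ for $x \ne \hi$, the definition of $H_\Omega$ yields
\[ H_\Omega(\Omega_\phi) = F(\ell^\Phi_\Gamma) + \sum_{x \ne \hi} G\bigl(\ell^\Phi_{\Gamma_x}(c,d) \cdot \Phi_{[x,\hi]}(c,d)\bigr). \]

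For the sum, I would apply Theorem~\ref{thm:decomp} (with $\Phi_{[\hi,\hi]}=1$) to rewrite $\sum_{x \ne \hi} \ell^\Phi_{\Gamma_x}\Phi_{[x,\hi]} = \Phi_\Gamma - \ell^\Phi_\Gamma$, then use linearity of $G$ together with $\Phi_\Gamma = \ell^\Phi_\Gamma + \Phi_{\partial\Gamma}$ from Definition~\ref{def:localcdfornear} (applicable since $\Gamma$ is near-Eulerian by Proposition~\ref{p:nearcri}) to collapse this piece to $G(\Phi_{\partial\Gamma})$. Since $\partial\Gamma$ is the boundary of the Eulerian poset $\overline{\partial\Gamma}$ by Proposition~\ref{p:nearEulboundisboundEul}, Proposition~\ref{p:GF} gives $G(\Phi_{\partial\Gamma}) = g(\overline{\partial\Gamma}) = h(\overline{\partial\Gamma})$, the last equality holding because $h = g$ for Eulerian posets.

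For the $F$-term, I would expand $\ell^\Phi_\Gamma = \Phi_{\ses\Gamma} - \Phi_{\partial\Gamma}\cdot c$ and split $c = a+b$, reducing to $F(\Phi_{\ses\Gamma})$, $F(\Phi_{\partial\Gamma}\cdot a)$, and $F(\Phi_{\partial\Gamma}\cdot b)$. The first equals $h(\ses\Gamma)$ by Proposition~\ref{p:GF} (via Proposition~\ref{p:sesnear} to identify $\ses\Gamma$ as the boundary of an Eulerian poset), and the second is $F(\Psi_{\overline{\partial\Gamma}}) = h(\overline{\partial\Gamma})$ by the same proposition. For the third, I would first establish the auxiliary identity
\[ F(w\cdot b) = \overline{G(w)} \qquad \text{for all } w \in R_{\Psi_{}}, \]
which is a short unpacking of the recursion: in $\Delta(wb)$ every term has its right tensor-factor ending in $b$ (and is killed by $\kappa$) except the one where the trailing $b$ is removed, leaving $G(w)\cdot\kappa(1) = G(w)$. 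Taking $w = \Phi_{\partial\Gamma}$ yields $F(\Phi_{\partial\Gamma}\cdot b) = \overline{g(\overline{\partial\Gamma})}$.

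Combining these three computations, together with the identity $h(\ses\Gamma) = h(\Gamma) + \overline{g(\overline{\partial\Gamma})}$ (immediate from the definition of $h$: $\ses\Gamma$ differs from $\Gamma$ only by the rank-$n$ element $q$, whose principal order ideal $[\ho, q]$ is $\overline{\partial\Gamma}$), I obtain $F(\ell^\Phi_\Gamma) = h(\Gamma) - h(\overline{\partial\Gamma})$. Adding back the $h(\overline{\partial\Gamma})$ computed in the second paragraph gives $H_\Omega(\Omega_\phi) = h(\Gamma)$, as required. The main obstacle is the auxiliary identity $F(w\cdot b) = \overline{G(w)}$, which is not explicitly in the excerpt but falls out cleanly once one writes down $\Delta(wb)$ and observes that $\kappa$ annihilates every monomial ending in $b$.
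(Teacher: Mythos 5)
Your proof is correct, and the overall structure matches the paper's: both start by isolating the $x=\hi$ term (which is the unique $e$-term and contributes $F(\ell^\Phi_\Gamma)$), then use the decomposition theorem together with $\Phi_\Gamma = \ell^\Phi_\Gamma + \Phi_{\partial\Gamma}$ to collapse the remaining sum to $G(\Phi_{\partial\Gamma})$. Where you diverge is in how you finish. The paper invokes the identity $G(\Phi_B) = F(\Phi_B \cdot a)$ for Eulerian $B$ (which it records just before the proposition as a consequence of $h(B)=g(B)$) to write $F(\ell^\Phi_\Gamma) + G(\Phi_{\partial\Gamma}) = F(\ell^\Phi_\Gamma + \Phi_{\partial\Gamma}\cdot a) = F(\Psi_\Gamma) = h(\Gamma)$, citing Lemma~\ref{l:localcd} and Proposition~\ref{p:GF}. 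You instead expand $\ell^\Phi_\Gamma = \Phi_{\ses\Gamma} - \Phi_{\partial\Gamma}\cdot c$, split $c = a+b$, and prove the auxiliary identity $F(wb) = \overline{G(w)}$ directly from the recursion (which checks out: in $\Delta(wb)$ every right tensor-factor ends in $b$, hence is killed by $\kappa$, except the one obtained by removing the trailing $b$, whose right factor is $1$). Combined with $h(\ses\Gamma) = h(\Gamma) + \overline{g(\overline{\partial\Gamma})}$, this gives $F(\ell^\Phi_\Gamma) = h(\Gamma) - h(\overline{\partial\Gamma})$, and adding $G(\Phi_{\partial\Gamma}) = h(\overline{\partial\Gamma})$ recovers $h(\Gamma)$. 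Your route is longer and requires proving a new lemma, whereas the paper re-uses an identity already in hand; but the extra identity $F(wb)=\overline{G(w)}$ is a clean companion to $F(wa) = G(w)$ and your argument is entirely sound.
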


\begin{proof}
By definition of the mixed $cd$-index, we have
\[
H_\Omega(\Omega_{\phi})
 = F(\ell^\Phi_{\Gamma} )
+ G\left( \Phi_{{\partial \Gamma}}\right)
 = F(\ell^\Phi_{\Gamma}) + F\left(\Phi_{{\partial \Gamma}} \cdot a\right)
 = F(\ell^\Phi_{\Gamma}+\Phi_{{\partial \Gamma}} \cdot a) = F(\Psi_{\Gamma}),
 \]
where the first equality follows from the fact that $\ell^\Phi_\Gamma \cdot e$ is the only term in $\Omega_\phi$ containing $e$.
Thus by Proposition~\ref{p:GF} we have $F(\Psi_{\Gamma}) = h(\Gamma)$.
\end{proof}

We know from \cite[Proposition~7.12]{bayer00}  that for any $cd$-monomials $u$ and $v$, we have
\begin{align*}
G(u \cdot d \cdot v) & = G(u \cdot d) \cdot G(v)\\
F(u\cdot  d\cdot  v) & = G(u \cdot d) \cdot F(v).
\end{align*}
Thus for any $u'\in R'_\Phi$ and $v\in R_\Omega$ we have
\begin{equation}\label{eq:mapHtimesd}
H(u' \cdot d' \cdot v) = G(u'|_{c' \mapsto c, d'\mapsto d} \cdot d) \cdot H(v).
\end{equation}

\begin{prop}
We define the linear map $L_\Omega$ \  from $R_{\Omega} \rightarrow \bK[t, u]$ by \[L_\Omega:=\nabla \circ (H\otimes G^*) \circ \varrho.\]
Then $L_\Omega$ maps the mixed $cd$-index to the local $h$-polynomial in the following sense:
 \[L_\Omega(\Omega_{\phi}) = \ell^h_B(\Gamma).\]
\end{prop}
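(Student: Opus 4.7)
The plan is to unfold the composition $L_\Omega = \nabla \circ (H \otimes G^*) \circ \varrho$ applied to $\Omega_\phi$, using the comodule property to reduce to known evaluations on each side of the tensor. First I apply Theorem~\ref{t:selfre} to obtain
\[\varrho(\Omega_\phi) = \sum_{x \in B} \Omega_{\phi_x} \otimes \Phi_{[x,\hi]},\]
after which applying $H \otimes G^*$ and then the multiplication $\nabla$ gives
\[L_\Omega(\Omega_\phi) = \sum_{x \in B} H(\Omega_{\phi_x}) \cdot G^*(\Phi_{[x,\hi]}).\]

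Next I would evaluate each factor separately. Since $\phi_x \colon \Gamma_x \to [\ho, x]$ is a strong formal subdivision of rank $0$, Proposition~\ref{p:H} applied to $\phi_x$ gives $H(\Omega_{\phi_x}) = h(\Gamma_x;t,u)$. For the second factor, I would prove
\[G^*(\Phi_{[x,\hi]}) = (-1)^{\rk([x,\hi])} g([x,\hi]^*;t,u).\]
The key input is the standard fact that the $cd$-index is reversed by poset dualization: $\Phi_{P^*}$ is obtained from $\Phi_P$ by reversing every $cd$-word, equivalently reversing every underlying $ab$-monomial (since $c=a+b$ and $d=ab+ba$ are palindromic). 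For $x\neq\hi$, the polynomial $\Phi_{[x,\hi]}$ is homogeneous of degree $\rk([x,\hi])-1$ in $\{a,b\}$, so unwinding the definition of $G^*$ on a length-$n$ $ab$-monomial yields $G^*(\Phi_{[x,\hi]}) = (-1)^{\rk([x,\hi])} G(\Phi_{[x,\hi]}^{\mathrm{rev}}) = (-1)^{\rk([x,\hi])} G(\Phi_{[x,\hi]^*}) = (-1)^{\rk([x,\hi])} g([x,\hi]^*)$, where the final equality is Proposition~\ref{p:GF}.

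The boundary case $x = \hi$ needs a direct check: here $[x,\hi] = \{\hi\}$, so $\Phi_{\{\hi\}} = e$ and $G^*(e) = 1 = (-1)^0 \cdot g(\{\hi\}^*)$, in agreement with the formula. Substituting both evaluations gives
\[L_\Omega(\Omega_\phi) = \sum_{x \in B} h(\Gamma_x;t,u) \cdot (-1)^{\rk([x,\hi])} g([x,\hi]^*;t,u),\]
which is exactly Definition~\ref{d:localpoly} of $\ell^h_B(\Gamma)$.

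I expect the main obstacle to be the dualization identity for the second factor: verifying cleanly that $G^*$ applied to the refined $e$-augmented $cd$-index of an interval produces the $g$-polynomial of its dual with the predicted sign, and handling the singleton interval at $x = \hi$ so that the formula is uniform over all $x \in B$. Once this identity is in place, the remainder of the argument is just the bookkeeping above, driven entirely by the comodule property and Proposition~\ref{p:H}.
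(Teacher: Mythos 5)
Your proof is correct and follows the same route as the paper: apply the comodule property to unfold $\varrho(\Omega_\phi)$, evaluate $H_\Omega(\Omega_{\phi_x}) = h(\Gamma_x)$ via Proposition~\ref{p:H}, evaluate $G^*(\Phi_{[x,\hi]}) = (-1)^{\rk([x,\hi])}g([x,\hi]^*)$, and compare with Definition~\ref{d:localpoly}. The paper's own proof is a one-line chain of equalities that silently uses the dualization identity for $G^*$ and the $x=\hi$ base case, both of which you have (correctly) verified, including the degree/sign bookkeeping $(-1)^{n+1}$ with $n = \rk([x,\hi]) - 1$.
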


Note that $\deg(L_\Omega(w)) = \deg(w)+ 1$ for monomial $w$.

\begin{proof}
By the comodule property, 
\[L_\Omega(\Omega_{\phi}) = 
(\nabla \circ (H_\Omega\otimes G^*) ) 
\left(\sum_{x\in B} \Omega_{\phi_x} \otimes \Phi_{[x, \hi]}\right)
 = \sum_{x\in B} h(\Gamma_x) \cdot  (-1)^{\rk([x, \hi])}  g([x, \hi]^*),
\]
which is the local $h$-polynomial $\ell^h_B(\Gamma)$.
\end{proof}

We write down explicit formula for $L_\Omega$ as follows.
For any non-negative integers $k, l$ we define polynomials in $\mathbb{Z}[t, u]$ as follows:
	{\renewcommand{\arraystretch}{1.4}
	\begin{center}
$\begin{tabu}{l c l }
S_k(t, u) & := & 2 u^2 \sum_{j=1}^{k} (-1)^{k-j+1} Q_{j}  Q_{k-j+1}
; \\\relax
S'_{l, k}(t, u) & := &  2 u^2\sum_{j=1}^{k} (-1)^{k-j+1} Q_{l+j} Q_{k-j+1}; \\ \relax
U_{l, k}(t, u) & := & u^2 Q_{l+2}Q_{k+1} (-1)^{k+1}
+ u^2
Q_{l+1}Q_{k+2} (-1)^{k+2}.
 \relax
\end{tabu}$	\end{center}}
\noindent
Note that we have 
	{\renewcommand{\arraystretch}{1.4}
	\begin{center}
$\begin{tabu}{l c l }
(\nabla \circ (H\otimes G^*) ) (\Delta|_{e\mapsto 0} (c^k)) & = & S_k(t, u)
; \\\relax
(\nabla \circ (H\otimes G^*) ) ((c')^l \cdot \Delta|_{e\mapsto 0} (c^k))
& = & S'_{l, k}(t, u); \\ \relax
(\nabla \circ (H\otimes G^*) )
(c^l \cdot \Delta|_{e\mapsto 0}(d) \cdot c^k )
 & = & U_{l, k}(t, u),
 \relax
\end{tabu}$
	\end{center}}
	\noindent
	where $\Delta|_{e\mapsto 0}(w) := \Delta(w)|_{e\mapsto 0}$ and the multiplications are well-defined since $R_\Phi\otimes R_\Phi$ is both a left and a right $R_\Phi$-module.

\begin{prop}
Let $E ' = (c')^{l_1} d' (c')^{l_2} d' \cdots d' (c')^{l_s} d' (c')^l$ and $E = c^{k_1} d c^{k_2} d \cdots d c^{k_r} d c^k$.
Then the explicit formula for $L_\Omega(E' \cdot E)$ is given by \begin{multline}\label{eq:okiamdoingthis}
L_\Omega(E' \cdot E) = (tu)^{r +s}
\left( \prod_{i=1}^s T_{l_{i}+1}  \right)
\left( \prod_{i=2}^r T_{k_{i}+1} \right) T_{k+1}\cdot \\
\Bigg(
(t R_l + u Q_l) \cdot u Q_{k_1+1} 
(-1)^{k_1 +1}
+ T_{l+k_1+1} \frac{u Q_{k+1}}{T_{k+1}}\\
+
\Big(
S'_{l, k_1}
+
T_{l+k_1+1}
\sum_{i=2}^{r+1}
\frac{S_{k_i}}{T_{k_i+1}}
\Big)
+
\Big(
\frac{1}{tu} \frac{U_{l+k_1, k_2}}{T_{k_2+1}}
+
\frac{1}{tu} T_{l+k_1+1} \sum_{i=2}^{r+1} \frac{U_{k_i, k_{i+1}}}{T_{k_{i}+1} T_{k_{i+1}+1}}\Big)\Bigg)
\end{multline}
\end{prop}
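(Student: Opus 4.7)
The plan is to evaluate $L_\Omega(E' \cdot E) = (\nabla \circ (H_\Omega \otimes G^*) \circ \varrho)(E' \cdot E)$ by expanding the coproduct $\Delta(E)$ explicitly and then applying the closed forms for $F$ and $G$ on $cd$-monomials from Proposition~\ref{p:valuesofFandG} to each resulting summand, so as to match the right-hand side of~\eqref{eq:okiamdoingthis}.

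First I would unpack $\Delta(E)$. Since $\Delta$ is the cut-one-letter coproduct on $R^e_\Psi$, one readily checks the product rule $\Delta(uv) = \Delta(u)(1 \otimes v) + (u \otimes 1)\Delta(v)$ on non-trivial $ab$-words, from which (together with the base cases $\Delta(c^k) = e \otimes c^k + \sum_{m=0}^{k-1} 2\, c^m \otimes c^{k-1-m} + c^k \otimes e$ and $\Delta(d) = e \otimes d + c \otimes 1 + 1 \otimes c + d \otimes e$) one obtains a decomposition of $\Delta(E)$ for $E = c^{k_1} d c^{k_2} d \cdots d c^{k_r} d c^k$ (with $k_{r+1} := k$) into four groups of terms indexed by the location of the cut: the boundary cuts $e \otimes E$ and $E \otimes e$, cuts inside each block $c^{k_i}$ for $i = 1, \ldots, r+1$, and cuts inside each $d$.

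Second I would apply $E' \cdot (-)$, then $H_\Omega \otimes G^*$, then $\nabla$, group by group. The boundary term $E' \cdot (e \otimes E)$ gives $F(E') \cdot G^*(E)$, which by Proposition~\ref{p:valuesofFandG} and $G^*(w) = (-1)^{\deg w + 1} G(w^{\mathrm{rev}})$ reproduces Term~1 once the common prefactor is pulled out. The term $E' \cdot (E \otimes e)$ gives $G((E')^{cd} \cdot E)$, which by the same proposition produces Term~2 (the factor $T_{l+k_1+1}$ arises because $(E')^{cd}$ merges with the first $c$-block of $E$ into $c^{l+k_1}$). The cuts inside the $c^{k_1}$-block mix $E'$ with $c^m$ and produce $S'_{l,k_1}$ directly from its defining identity; cuts inside $c^{k_i}$ for $i \geq 2$ produce $T_{l+k_1+1} \cdot S_{k_i}/T_{k_i+1}$, the extra $T_{l+k_1+1}$ coming from the merged leftmost block under $H_\Omega$. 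Cuts inside the first $d$ give $U_{l+k_1,k_2}/(tu \cdot T_{k_2+1})$ by the local identity stated in the paper (again with $l+k_1$ from the merger with $E'$), while cuts in subsequent $d$'s give $T_{l+k_1+1} \cdot U_{k_i,k_{i+1}}/(tu \cdot T_{k_i+1} T_{k_{i+1}+1})$. Factoring out the common $(tu)^{r+s} \prod_{i=1}^s T_{l_i+1} \prod_{i=2}^r T_{k_i+1} \cdot T_{k+1}$ (which is what remains of the $F$/$G$-values on the uncut portions) then yields exactly the parenthesized expression in~\eqref{eq:okiamdoingthis}.

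The main obstacle will be the bookkeeping. Two points require care: (a) the sign tracking, since $G^*$ contributes $(-1)^{\deg w + 1}$ while the alternating sums inside $S_k$, $S'_{l,k}$, and $U_{l,k}$ contribute their own signs, and these must be reconciled with the $(-1)^{k_1+1}$ in Term~1 using the fact that $T_n = 0$ for even $n$ forces all relevant $k_i, k$ to be even in nonvanishing terms; and (b) the asymmetry between the first $c$-block and first $d$ (where $E'$ attaches and merges) versus the later blocks and $d$'s. Once the index conventions are fixed and each kind of cut is matched with its corresponding $S$, $S'$ or $U$ building block, the remaining algebra is mechanical.
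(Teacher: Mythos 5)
Your proposal is correct and follows essentially the same route as the paper's own proof: expand $\varrho(E'\cdot E)$ into the two boundary terms $E'e\otimes E$, $E'E\otimes e$, plus the contributions of cuts inside each $c$-block and each $d$, then apply $\nabla\circ(H_\Omega\otimes G^*)$ and match the resulting products against Proposition~\ref{p:valuesofFandG} and the defining identities for $S$, $S'$, $U$. The paper's proof is even terser than yours (it simply writes down the four-group expansion of $\varrho(E'\cdot E)$ and says ``apply $\nabla\circ(H\otimes G^*)$ and use Proposition~\ref{p:valuesofFandG}''); your added observations about the merged block $c^{l+k_1}$, the asymmetry between the first $c$/$d$-block and the others, and the sign reconciliation via $T_n=0$ for $n$ even are all correct and useful for actually carrying out the bookkeeping the paper leaves implicit.
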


\begin{proof}
By applying $\varrho$ to a word $E' \cdot E$, where $E' = (c')^{l_1} d' (c')^{l_2} d' \cdots d' (c')^{l_s} d' (c')^l$ and $E = c^{k_1} d c^{k_2} d \cdots d c^{k_r} d c^k$, we have
\begin{multline*}
 \varrho(E' \cdot E) = 
E' e\otimes E +E' E \otimes e \\
+ 
E' \cdot \sum_{i=1}^{r+1} c^{k_1} d \dots c^{k_{i-1}} d \cdot 
\Delta|_{e\mapsto 0} (c^k_i) \cdot 
d c^{k_{i+1}} \dots d c^{k_{r+1}}\\
+
E' \cdot \sum_{i=1}^{r} c^{k_1} d \dots c^{k_{i}} \cdot 
\Delta|_{e\mapsto 0} (d) \cdot 
c^{k_{i+1}} d \dots d c^{k_{r+1}},
\end{multline*}
where we let $k_{r+1} = k$.
We then apply $\nabla\circ (H^* \otimes G^*)$ and use Proposition~\ref{p:valuesofFandG} to get the desired result.
\end{proof}

Note that $L_\Omega$ vanishes if any of the $l_i$'s is odd. 

\begin{defn}[$L'_\Omega$ and $G'$]
We define a linear map $L'_\Omega\colon  R_{\Omega}\rightarrow \bK[t, t', u, u']$
by setting $$L'_\Omega(w) = (t' u')^{\deg(w)+1}  L(w)(t /t', u /u').$$
We define a linear map $G'\colon  R_{\Omega}\rightarrow \bK[t, t', u, u']$
by setting $$G'(w) = 
G(w)(t t', u u').$$
\end{defn}

By checking the definitions we have \[L'_\Omega (\Omega_{\phi}) =(t' u')^{\rk(\Gamma)} \ \ell^h_B(\Gamma; t /t', u /u')\]
and
\[G'(\Phi_{B}) = g(B; t t', u u').\]
One can write down the explicit formula for $L'_\Omega$ by modifying \eqref{eq:okiamdoingthis}.

\begin{theorem} \label{t:mixcd}
We define a linear map $H'_\Omega\colon  R_{\Omega}\rightarrow \bK[t, t', u, u']$ by setting 
\[H'_\Omega = \nabla \circ (L'_\Omega \otimes G') \circ \varrho.\]
Then $H'_\Omega$ takes the mixed $cd$-index to the mixed $h$-polynomial:
 \[H'_\Omega(\Omega_{\phi}) = h_B(\Gamma; t', u', t, u).\]
\end{theorem}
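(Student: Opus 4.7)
The plan is to apply the comodule property of the mixed $cd$-index (Theorem~\ref{t:selfre}) and then recognize that $L'_\Omega$ and $G'$ convert each tensor factor into precisely the ingredients of the mixed $h$-polynomial as defined in Definition~\ref{d:mixedpoly}. In short, the argument should be nearly mechanical once one unwinds the definitions.

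First I would apply $\varrho$ to $\Omega_{\phi}$ using Theorem~\ref{t:selfre}, giving
\[
\varrho(\Omega_{\phi}) = \sum_{x \in B} \Omega_{\phi_x} \otimes \Phi_{[x,\hi]},
\]
where $\phi_x\colon \Gamma_x\to [\ho,x]$ is the restriction of $\phi$. Since $H'_\Omega = \nabla \circ (L'_\Omega \otimes G') \circ \varrho$ by definition, this gives
\[
H'_\Omega(\Omega_\phi) = \sum_{x \in B} L'_\Omega(\Omega_{\phi_x}) \cdot G'(\Phi_{[x,\hi]}).
\]

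Next I would evaluate the two factors. For $L'_\Omega$, apply Proposition~\ref{p:H}'s analogue for local $h$-polynomials (the preceding proposition stating $L_\Omega(\Omega_{\phi_x}) = \ell^h_{[\ho,x]}(\Gamma_x;t,u)$), combined with the definition $L'_\Omega(w) = (t'u')^{\deg(w)+1}L_\Omega(w)(t/t',u/u')$. Since $\Omega_{\phi_x}$ has total degree $\rk(\Gamma_x) - 1$, this yields
\[
L'_\Omega(\Omega_{\phi_x}) = (t'u')^{\rk(\Gamma_x)}\, \ell^h_{[\ho,x]}(\Gamma_x;\,t/t',\,u/u').
\]
For $G'$, apply the fact that $G(\Phi_B) = g(B;t,u)$ (Proposition~\ref{p:GF}) together with the definition $G'(w) = G(w)(tt',uu')$ to obtain
\[
G'(\Phi_{[x,\hi]}) = g([x,\hi];\,tt',\,uu').
\]

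Putting these together,
\[
H'_\Omega(\Omega_\phi) = \sum_{x\in B} (t'u')^{\rk(\Gamma_x)} \cdot \ell^h_{[\ho,x]}(\Gamma_x;\,t/t',\,u/u') \cdot g([x,\hi];\,tt',\,uu'),
\]
which is exactly $h_B(\Gamma;t',u',t,u)$ by Definition~\ref{d:mixedpoly}. There is essentially no obstacle here; the only subtlety is bookkeeping on degrees to confirm that the exponent of $(t'u')$ coming from $L'_\Omega$ matches $\rk(\Gamma_x)$, which requires remembering that $\Omega_{\phi_x}$ is homogeneous of degree $\rk([\ho,x])-1 = \rk(\Gamma_x)-1$ in the conventions of Section~\ref{sec:mixcd}. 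Once that degree check is in place, the theorem follows by a one-line substitution.
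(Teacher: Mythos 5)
Your proof is correct and follows essentially the same route as the paper's: apply the comodule property of $\Omega_\phi$, then evaluate the two tensor factors via $L'_\Omega$ and $G'$, with the degree bookkeeping that $\Omega_{\phi_x}$ is homogeneous of degree $\rk(\Gamma_x)-1$ giving the factor $(t'u')^{\rk(\Gamma_x)}$. The paper's proof is exactly this three-line computation.
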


\begin{proof}
By the comodule property, 
\begin{align*}
H'_\Omega(\Omega_{\phi})
 & = 
(\nabla \circ (L'_\Omega \otimes G') ) 
\left(
\sum_{x\in B} \Omega_{\phi_x} \otimes \Phi_{[x, \hi]}
\right)
\\
& = \sum_{x\in B} 
L'_\Omega(\Omega_{\phi_x})\cdot 
G'(\Phi_{[x, \hi]})\\
& = \sum_{x\in B} 
(t' u')^{\rk(\Gamma_x)} \cdot 
\ell^h_{[\ho, x]}(\Gamma_x; t/t', u /u') \cdot 
g([x, \hi]; t t', u u'),\end{align*}
which is the mixed $h$-polynomial $h_B(\Gamma;t',u',t,u)$.
\end{proof}

Similar to $L_\Omega$ and $L'_\Omega$, the explicit formula for $H'_\Omega$ can be determined. 
Note that for $E' = (c')^{l_1} d' (c')^{l_2} d' \cdots d' (c')^{l_s} d' (c')^l$ and an unprimed monomial $E$, the image $H'_\Omega(E' \cdot E)$ vanishes if any of the $l_i$'s is odd. 

\section{Examples}\label{sec:examples}

We devote this section to examples of strong formal subdivisions.

The following example illustrates the different behaviors of the local $cd$-index and the local $h$-polynomial, despite their similar decomposition formulas.

\begin{example}\label{exam:additive}
Let $\phi\colon \Gamma\to B $ be a rank $0$ strong formal subdivision and $x_0\in B$. 
Suppose further $\phi^{-1}(x)$ is a singleton unless $x= x_0 \in B$.
The mixed $cd$-index of $\phi$ is given by
\begin{align*}
\Omega_\phi(c', d', c, d, e) & = \sum_{x\in B} \ell^\Phi_{\Gamma_x}(c', d') \cdot \Phi_{[x, \hi]}(c, d, e)\\
& = \Phi_B(c, d) +  \ell^\Phi_{\Gamma_{x_0}}(c', d') \cdot \Phi_{[x_0, \hi]}(c, d, e),
\end{align*}
where we use Proposition~\ref{p:localcdvanishing} to get the last equality. 
By \eqref{e:hdecomp}, the $h$-polynomial of $\Gamma$ is
\[
h(\Gamma;t,u) = h(B;t,u) + \sum_{x\geq x_0} \ell^h_{[\ho, x]}(\Gamma_x;t,u)  \cdot g([x, \hi];t,u).
\]
Recall that the linear map $H_\Omega$ takes the mixed $cd$-index to the $h$-polynomial. In particular, the map $H_\Omega$ takes $\Phi_B$ to $h(B)$ and $\Omega_\phi$ to $h(\Gamma)$.
Thus we have
\[
H_\Omega \left(\ell^\Phi_{\Gamma_{x_0}} \cdot \Phi_{[x_0, \hi]} \right) =\sum_{x\geq x_0} \ell^h_{[\ho, x]}(\Gamma_x)  \cdot g([x, \hi]),
\]
showing that the local $cd$-index is not mapped to the local $h$-polynomial. 
In fact, for example the subdivision in Example~\ref{exam:threesubs} has two non-trivial local $h$-polynomials on the right side of the above equation. 
However, if $x_0 = \hi$, we do have
\[
H_\Omega(\ell^\Phi_{\Gamma} \cdot e)= 
\ell^h_{B}(\Gamma).
\]
\end{example}

\medskip

We now study the barycentric subdivision of a simplex. Recall from Example~\ref{exam:boolean} we denote by $\cB_n$ the poset of subsets of $[n]= \{1, \dots, n\}$.

Given two posets $P_1$ and $P_2$, the \emph{Cartesian product} of $P_1$ and $P_2$ is defined as the poset $P_1\times P_2$ with underlying set $\{(x, y)\, | \, x\in P_1, y\in P_2\}$ such that $(x_1, y_1) \leq (x_2, y_2)$ if $x_1\leq x_2$ in $P_1$ and $y_1\leq y_2$ in $P_2$.
We define the \emph{pyramid} of a poset $P$ as $\Pyr(P) := P\times \cB_1$.

\begin{prop}[{\cite[Proposition~4.2]{ehrenborg98}}]
For an Eulerian poset $B$, the $cd$-index of its pyramid $\mathrm{Pry(B)}$ is given by
\begin{equation}\label{eq:pyramid}
\Phi_{\mathrm{Pyr(B)}} = \frac{1}{2} \left(
\Phi_B \cdot c + c\cdot \Phi_B + \sum_{\ho< x < \hi} \Phi_{[\ho, x]} \cdot d \cdot \Phi_{[x, \hi]}.
\right)
\end{equation}
\end{prop}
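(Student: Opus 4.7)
The plan is to compute $\Phi_{\Pyr(B)}$ by first deriving a formula for the $ab$-index via direct flag enumeration, then recasting it in $cd$-form through an identity in the coalgebra $R^e_{\Psi}$.

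Elements of $\Pyr(B) = B \times \cB_1$ come in two layers: a lower copy $\{(x,0) : x \in B\}$ with rank $\rho(x)$, and an upper copy $\{(x,1) : x \in B\}$ with rank $\rho(x)+1$. Since the Cartesian product order requires both coordinates to be weakly increasing, every chain in $\Pyr(B)$ transitions from the lower copy to the upper copy at exactly one position. I would classify each such chain by the underlying chain $D = \{\ho = z_0 < z_1 < \cdots < z_m\}$ in $B$ (obtained by forgetting the layer) together with the transition position; this shows each chain in $\Pyr(B)$ arises from a chain $D$ in $B$ by inserting either $a$ (when the transition skips a layer at the same element) or $b$ (when the same element $z_s$ appears in both copies) at position $p+1$ in the characteristic monomial of $D$, where $p \in \{0\} \cup \rho(D)$. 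Summing both cases gives an insertion of $c = a+b$, so that
\[
\Upsilon_{\Pyr(B)} = c \cdot \Upsilon_B + \delta_b(\Upsilon_B),
\]
where $\delta_b$ is the derivation on $\bK\langle a, b \rangle$ with $\delta_b(a) = 0$ and $\delta_b(b) = bc$. Applying the substitution $a \mapsto a - b$ (under which $c \mapsto a$) converts this to
\[
\Psi_{\Pyr(B)} = a \cdot \Psi_B + \mu(\Psi_B),
\]
where $\mu$ is the derivation satisfying $\mu(a) = \mu(b) = ba$. Using $\Psi_B = \Phi_B \cdot a$, the Leibniz rule, and factoring out the rightmost $a$ gives the intermediate identity $\Phi_{\Pyr(B)} = a \cdot \Phi_B + \mu(\Phi_B) + \Phi_B \cdot b$ in the ambient ring $R_{\Psi}$.

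The crux is the following identity in $R^e_{\Psi}$, valid for every monomial $w$:
\[
2\mu(w) = m_d(\Delta(w)) - (a-b) \cdot w + w \cdot (a-b),
\]
where $m_d \colon R^e_{\Psi} \otimes R^e_{\Psi} \to R^e_{\Psi}$ is defined by $m_d(u \otimes v) = u \cdot d \cdot v$. I would prove this by induction on the length of $w$. The base cases $w \in \{e, 1, a, b\}$ are direct verifications. For $w = w' \cdot x$ with $x \in \{a, b\}$, one first verifies $\Delta(w'x) = \Delta(w') \cdot (1 \otimes x) + w' \otimes 1 + w'x \otimes e$ from the definition of $\Delta$ (using $ex = 0$), which yields $m_d(\Delta(w'x)) = m_d(\Delta(w')) \cdot x + w' \cdot d$. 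Combined with $\mu(w'x) = \mu(w') \cdot x + w' \cdot ba$ and the key observation that $d + x(a-b) - (a-b)x = 2ba = 2\mu(x)$ for both $x = a$ and $x = b$, the inductive step reduces to a short algebraic verification.

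Substituting this identity into the intermediate formula and invoking Proposition~\ref{p:cdindexismorphism} to identify $m_d(\Delta(\Phi_B)) = \sum_{\ho < x < \hi} \Phi_{[\ho, x]} \cdot d \cdot \Phi_{[x, \hi]}$ (the endpoint terms $e \otimes \Phi_B$ and $\Phi_B \otimes e$ vanish because $ed = de = 0$), the expression collapses to the claimed $cd$-formula. I expect the main obstacle to be identifying the correct derivation $\mu$ and formulating the coalgebra identity above; once these are in place, the remaining computations are essentially mechanical, and the factor $\tfrac{1}{2}$ together with the interior $d$-sum emerges naturally from this algebraic structure.
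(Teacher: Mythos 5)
Your proof is correct, and all the key steps check out: the flag-enumerator identity $\Upsilon_{\Pyr(B)} = c\cdot\Upsilon_B + \delta_b(\Upsilon_B)$ follows from the chain classification you describe (a chain of $\Pyr(B) = B\times\cB_1$ projects to a chain $D$ in $B$ together with a choice of transition point $p\in\{0\}\cup\rho(D)$ and the choice of whether the transition element is doubled, contributing an inserted $a$ or $b$ at position $p+1$); the passage to $\mu$ with $\mu(a)=\mu(b)=ba$ under the substitution $a\mapsto a-b$ is right; the coalgebra identity $2\mu(w)=m_d(\Delta(w))-(a-b)w+w(a-b)$ holds with the inductive argument you outline (the base cases and the reduction to $d+x(a-b)-(a-b)x=2ba$ both verify); and the endpoint terms $\Phi_{[\ho,\ho]}\otimes\Phi_B$ and $\Phi_B\otimes\Phi_{[\hi,\hi]}$ are killed by $ed=de=0$ in $R^e_\Phi$, yielding the stated $cd$-formula after substituting and collecting the $\tfrac12 c\Phi_B + \tfrac12\Phi_B c$ terms. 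The paper itself offers no proof of this statement (it cites Ehrenborg--Readdy's Proposition~4.2), but your argument is exactly in the coalgebra spirit of that reference, where the derivation $m_d\circ\Delta$ (their pyramid derivation, sending $c\mapsto 2d$ and $d\mapsto cd+dc$) plays the same structural role.
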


For example, the Boolean algebra $\cB_n$ is given by 
$\cB_n = {\Pyr(\cB_{n-1})}$. Using \eqref{eq:pyramid}, we have a recursive formula
\[\Phi_{\cB_n} = \frac{1}{2}
\left( \Phi_{\cB_{n-1}} \cdot c + c \cdot \Phi_{\cB_{n-1}}+
\sum_{i=1}^{n-2} \binom{n-1}{i}
 \Phi_{\cB_i} \cdot d \cdot \Phi_{\cB_{n-1-i}} \right),
\]
since the intervals $[\ho, x]$ and $[x, \hi]$ are both Boolean algebras.

Another example is the barycentric subdivision of an Eulerian poset.
Given a graded poset $P$ with $\ho$, the \emph{barycentric subdivision} of $P$ is defined as 
the poset $\Bary(P)$ with underlying set $\{C \mid C\ \text{ is a chain of $P$ containing } \ho\}$ such that $C\leq C'$ if $C\subset C'$ in $P$.
The barycentric subdivision is related to the pyramid operation because for an Eulerian poset $P$ we have $\Bary(P) = \Pyr(\Bary(\partial P))$, that is, the barycentric subdivision of an Eulerian poset is the pyramid of the barycentric subdivision of its boundary.

\begin{prop}\label{p:localbary}
The local $cd$-index of $\cS_n:=\Bary(\cB_n)$ is given by
\[
\ell^\Phi_{\cS_n} = 
\frac{1}{2} \left( c\cdot \Phi_{\partial \cS_n} 
- \Phi_{\partial \cS_n} \cdot c + 
\sum_{T\subset [n-1], T\neq \emptyset}
f_T(\cB_n) \,  \Phi_{\cB_{|T|}} \cdot d \cdot \Phi_{L_T}
\right),
\]
where 
\[
L_{\{t_1<\dots< t_r\}}:= \partial \cS_{t_1} \times \partial \cS_{t_2 - t_1} \times \dots \times \partial \cS_{{n-t_r}}\]
and $f_T(\cB_n)$ is the number of chains with rank set $T$ in $\cB_n$, which is given by
\[
f_{\{t_1<\dots< t_r\}}(\cB_n) = \frac{n!}{(t_1)!(t_2-t_1)!\dots (n-t_r)!}.
\]
\end{prop}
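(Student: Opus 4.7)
The plan is to identify the Eulerian poset $\overline{\ses{\cS_n}}$ associated to the near-Eulerian $\cS_n$ with the pyramid $\Pyr(\overline{\partial \cS_n})$, then apply the pyramid formula~\eqref{eq:pyramid} with $B = \overline{\partial \cS_n}$ and use Lemma~\ref{l:localcd} to get $\ell^\Phi_{\cS_n} = \Phi_{\ses \cS_n} - \Phi_{\partial \cS_n}\cdot c$. The poset isomorphism extends $\cS_n \cong \Pyr(\partial \cS_n)$, which is the identity $\Bary(P) = \Pyr(\Bary(\partial P))$ stated in the excerpt applied with $P = \cB_n$; the semisuspension element $q$ and the maximum $\hi$ of $\overline{\ses{\cS_n}}$ correspond respectively to $(\hi,\ho_{\cB_1})$ and $(\hi,\hi_{\cB_1})$ in $\overline{\partial \cS_n}\times \cB_1$.

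Applying the pyramid formula and using $\Phi_{\overline{\partial \cS_n}} = \Phi_{\partial \cS_n}$ yields
\[
\Phi_{\ses\cS_n} = \tfrac{1}{2}\Big(\Phi_{\partial \cS_n}\cdot c + c\cdot \Phi_{\partial \cS_n} + \sum_{\ho < x < \hi}\Phi_{[\ho,x]}\cdot d\cdot\Phi_{[x,\hi]}\Big),
\]
where $x$ ranges over $\overline{\partial \cS_n}\setminus\{\ho,\hi\}$, i.e., non-trivial chains $C = \{\emptyset \subsetneq S_1\subsetneq \cdots \subsetneq S_r\}$ in $\cB_n$ with $S_r\neq[n]$. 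Each such chain has type $T = \{|S_1|,\dots,|S_r|\}\subseteq [n-1]$. The lower interval $[\ho,x]$, consisting of subchains of $C$ containing $\emptyset$, is naturally isomorphic to the Boolean lattice $\cB_{|T|}$. The number of chains of type $T = \{t_1<\cdots<t_r\}$ equals $\binom{n}{t_1}\binom{n-t_1}{t_2-t_1}\cdots\binom{n-t_{r-1}}{t_r-t_{r-1}} = f_T(\cB_n)$, a standard multinomial count.

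The main combinatorial step is identifying $[x,\hi]\cong\overline{L_T}$. Each non-$\hi$ element $C'\in [x,\hi]$ is a chain in $\partial\cB_n$ containing $C$. Splitting $C'\setminus C$ into the ``gaps'' $(S_i, S_{i+1})_{\cB_n}$ for $i = 0,\dots,r$ (with $S_0=\emptyset$ and $S_{r+1}=[n]$), and using the order isomorphism $(S_i, S_{i+1})_{\cB_n}\cong (\ho,\hi)_{\cB_{t_{i+1}-t_i}}$, each gap portion becomes a chain in $\partial\cB_{t_{i+1}-t_i}$; prepending $\ho$ converts this into an element of $\partial\cS_{t_{i+1}-t_i}$, with the empty gap-portion corresponding to the minimum. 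This gives a rank- and order-preserving bijection between $[x,\hi]\setminus\{\hi\}$ and $L_T = \partial\cS_{t_1}\times\cdots\times\partial\cS_{n-t_r}$, with the two maxima identified, so $\Phi_{[x,\hi]} = \Phi_{L_T}$. Grouping by $T$ in the sum above and then subtracting $\Phi_{\partial\cS_n}\cdot c$ yields the claimed formula. The main obstacle is the combinatorial verification of this interval identification, in particular tracking ranks and the identification of empty gap-portions with the minima of the $\partial\cS_{t_{i+1}-t_i}$ factors.
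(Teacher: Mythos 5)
Your proposal is correct and follows essentially the same route as the paper: identify $\ov{\ses{\cS_n}} \cong \Pyr(\ov{\partial\cS_n})$ via $\Bary(\cB_n)=\Pyr(\Bary(\partial\cB_n))$, apply the pyramid formula, recognize $[\ho,x]\cong\cB_{|T|}$ and $[x,\hi)\cong L_T$ by splitting a chain through $x$ into the pieces in the gaps of $x$, group by rank set $T$ with multiplicity $f_T(\cB_n)$, and subtract $\Phi_{\partial\cS_n}\cdot c$. The interval identification you flag as "the main obstacle" is handled in the paper at the same level of detail (stated as a bijection with the poset isomorphism left to the reader), so no gap remains.
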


\begin{proof}
We first apply \eqref{eq:pyramid} to the Eulerian poset $\ov{\ses{\cS_n}} = \Pyr(\ov{\partial {\mathcal{S}}_n})$ and get
\[
\Phi_{\ses{\cS_n}} = 
\frac{1}{2} \left(
\Phi_{\partial S_n} \cdot c + c\cdot \Phi_{\partial S_n} + \sum_{\substack{x\in \ov{\partial \cS_n} \\ \ho< x < \hi}} \Phi_{[\ho, x]} \cdot d \cdot \Phi_{[x, \hi]} \right),
\]
since for an Eulerian poset $B$ we have $\Phi_{B} = \Phi_{\partial B}$.
Now each $x\in \partial \cS_n$ is a chain in $\partial \cB_n$, and $y < x$ in $\partial \cS_n$ is equivalent to $y$ being a subchain of $x$, 
hence we have $[\ho, x]\simeq \cB_{\rho(x)}$.

On the other hand, for $x, y\in \partial \cS_n$, the relation $x\leq y$ is equivalent to $y$ containing $x = \{\ho < x_1 < \dots < x_r\}$ as a subchain.
Then $y$ is uniquely given by the union of chains, one from each interval $[\ho, x_1), [x_1, x_2), \dots, [x_r, \hi)$. This gives a bijective correspondence between the set $[x, \hi) \subset \ov{\partial \cS_n}$ and the set 
\[\partial \cS_{\rho(x_1)} \times \partial \cS_{\rho(x_2) - \rho(x_1)} \times \dots \times \partial \cS_{n - \rho(x_r)}.\]
By checking the poset relations one can verify that they are isomorphic as posets. 

Now, we observe that for $x\in \partial \cS_n$
both $[\ho, x]$ and $[x, \hi)$ depend only on $T:= \{\rho(x_1), \dots, \rho(x_r)\}$, and there are $f_T(\cB_n)$ many such chains in $\cB_n$. Hence we have
\[
\Phi_{\ses{\cS_n}}
 = 
\frac{1}{2} \left( c\cdot \Phi_{\partial \cS_n} 
+ \Phi_{\partial \cS_n} \cdot c + 
\sum_{T\subset [n-1], T\neq \emptyset}
f_T \,  \Phi_{\cB_{|T|}} \cdot d \cdot \Phi_{L_T}
\right),
\]
and by definition the local $cd$-index is given by 
\[
\ell^\Phi_{\cS_n} = \Phi_{\ses{\cS_n}} - \Phi_{{\partial \cS_n}} \cdot c. \qedhere
\]
\end{proof}

The $cd$-index of $\partial \cS_n$ can be computed by applying the decomposition theorem to the map $\sigma_n|_{\partial \cS_n}\colon {\partial \cS_n}\to {\partial \cB_n}$
\[
\Phi_{\partial \cS_n} = \Phi_{\cB_n} +
\sum_{i=1}^{n-1} \binom{n}{i}
\ell^\Phi_{{\mathcal{S}}_{i}}
\cdot 
\Phi_{\cB_{n-i}},
\]
which depends on the local $cd$-index of small barycentric subdivisions. Hence Proposition~\ref{p:localbary} gives a recursive way of computing the $cd$-invariants.

For the corresponding subdivision map $\sigma_n\colon {\mathcal{S}}_n\to \cB_n$, we can write down the mixed $cd$-index
\[
\Omega_{\sigma_n}(c', d', c, d, e)= 
\Phi_{\cB_n}(c, d) + 
\sum_{i=1}^n \binom{n}{i}
\ell^\Phi_{{\mathcal{S}}_{i}}(c', d')
\cdot 
\Phi_{\cB_{n-i}}(c , d, e).
\]

We remark that there are explicit formulas for both the local $h$-polynomial \cite[Proposition~2.4]{stanley92} and the mixed $h$-polynomial \cite[Example~5.9]{katz16} for the subdivision $\sigma_n \colon \mathcal{S}_n \to \cB_n$.

\begin{example}
We compute the $cd$-invariants for $\sigma_n \colon \cS_n\to \cB_n$ for $n=2, 3$ explicitly. Note that $\Bary(\cB_1) = \cB_1$.

The $cd$-index of $\cB_2$, the face poset of an edge, is given by $\Phi_{\cB_2} = c$. Note that $\partial \cB_2 = \partial \cS_2$.
For $T=\{1\}$, we have $f_T(\cB_2) = 2$ and $L_T = \partial \cS_1\times \partial \cS_1  = \partial \cB_1$. Thus the local $cd$-index of $\cS_2$ is given by
\[
\ell^\Phi_{S_2} = \frac{1}{2}\left(c^2 - c^2 + 2 d\right) = d
\]
and the mixed $cd$-index of $\sigma_2\colon \cS_2\to \cB_2$
is given by $\Omega_{\sigma_2}(c', d', c, d, e)
=
c + d' e$.
Using our linear maps the local $h$-polynomial is given by 
$\ell^h_B(\cS_2) = tu$
and the mixed $h$-polynomial is given by
$
h_B(\cS_2) = (u u')^2 +  t t' u u'.$

The $cd$-index of $\cB_3$, the face poset of a triangle, is given by $\Phi_{\cB_3} = c^2 + d$. 
The $cd$-index of $\partial {\mathcal{S}}_3$, the face poset of a hexagon, is given by $c^2 + 4d$.
For $T=\{1\}, \{2\}$, we have $f_T(\cB_3) = 3$ and 
$L_T = \partial \cS_2 \times \partial \cS_1 = \partial \cS_1 \times \partial \cS_2 = \partial \cB_2$. For $T = \{1, 2\}$, we have $f_T(\cB_3) = 6$ and $L_T =\partial  \cS_1 \times \partial \cS_1\times \partial \cS_1 = \partial \cB_1$.
Thus the local $cd$-index of ${\mathcal{S}}_3$ is given by
\[
\ell^\Phi_{{\mathcal{S}}_3} = 
\frac{1}{2} \big(
c (c^2 + 4d) - (c^2 + 4d) c + 3 dc + 3 dc + 6 cd
\big) = 5cd + dc,
\]
and the mixed $cd$-index of $\sigma_3 \colon {\mathcal{S}}_3 \to \cB_3$ is given by
\[
\Omega_{\sigma_3}(c', d', c, d, e)
=
c^2 + d + 3 d' + 5c'd'e + d'c'e.
\]
Using our linear maps the local $h$-polynomial is given by 
\[\ell^h_B(\cS_3) = t u^2 + t^2 u
\]
and the mixed $h$-polynomial is given by
\[
h_B(\cS_3) = 
(u u')^3 + 3 t' t u^2 (u')^2 + t t' u u' (t' u + t u').
\]
\end{example}

\medskip

We introduce a class of subdivisions called cuts, obtained by applying a {\em cutting operation} to a poset. 
The cutting operation generalizes an operation of Stanley \cite[Lemma~2.1]{stanley94}, and is dual to the zipping operation introduced in \cite{reading04}.

\begin{defn}
Let $B$ be an Eulerian poset and $x_0 \in B$ be an element of rank $k$. Let $\Sigma_1$ and $\Sigma_2$ be near-Eulerian lower order ideals in $B$ such that $\Sigma_1 \cup \Sigma_2= [\ho, x_0)$ and 
$I:=\Sigma_1\cap \Sigma_2$ satisfies $I  =  \partial \Sigma_1 = \partial \Sigma_2$.
The \emph{cutting operation} on $x_0$
changes $B$ into the poset
$\Gamma := B\setminus x_0 \cup \{y, x_1, x_2\}$ 
where the relations are given by
\begin{enumerate}
\item $z \leq  z'$ if $z\leq  z'$ in $B$;

\item $I \leq y \leq x_i$ for $i=1,2$;

\item $\Sigma_i \leq x_i$ for $i=1, 2$; and 

\item $x_i \leq z$ for $i=1, 2$ if $x_0 \leq z$ in $B$; 
\end{enumerate}
The corresponding poset map $\phi\colon \Gamma\to B$ , defined by 
\[
\phi(z) = \begin{cases}
x_0 & \text{if } z=y, x_1, x_2\\
z & \text{otherwise,}
\end{cases}
\]
is called a \emph{cut} on $x_0$ along $(\Sigma_1, \Sigma_2, I)$.
\end{defn}

If we assume $I\neq \emptyset$, i.e.~$k\geq 2$, then both $\Sigma_1$ and $\Sigma_2$ are graded of rank $k-1$. 
Hence one can easily check that the lower order ideal $I$ is isomorphic to the boundary of an Eulerian poset, the poset $\Gamma$ is lower Eulerian, and $\phi$ is a strong formal subdivision of rank $0$.
Note that we can apply the cutting operation to a near-Eulerian poset, since every near-Eulerian poset is contained in an Eulerian poset.

\begin{example}
\label{exam:cutting}
The subdivision of an edge by adding an interior vertex is a cut, with $x_0 = \hi$, $\Sigma_1 = \{\ho, 1\}$, $\Sigma_2 = \{\ho, 2\}$ and $I = \{\ho\}$, as shown in the following picture. Hence the local $cd$-index is given by $\ell^{\Phi}_\Gamma = d$. 
\vspace{-4mm}
\begin{figure}[H]
			\centering
\begin{tikzpicture}[scale=1]
		\draw [color=red] (0,0) -- (1,0) -- (2,0);
		 
		\draw [fill] (0,0) node[above]{$1$} circle [radius = 1pt];
		\draw [fill] (2,0) node[above]{$2$} circle [radius = 1pt];		
		\draw [fill=red] (1,0) circle [radius = 2pt];	
		\draw [->] (2.5,0) -- (3.5,0) node[midway, above] {$\phi$};
\end{tikzpicture}
				\hspace{5pt}
\begin{tikzpicture}[scale=1]
		\draw (0,0) -- (2,0);
		 
		\draw [fill] (0,0) node[above]{$1$} circle [radius = 1pt];
		\draw [fill] (2,0) node[above]{$2$} circle [radius = 1pt];
\end{tikzpicture}
\end{figure}

Similarly, consider a square with $4$ vertices, labeled by $1, 2, 3, 4$. The subdivision of the square into two triangles $\{1, 2, 3\}$ and $\{1, 3, 4\}$ is again a cut, 
with $x_0 = \hi$, $\Sigma_1 = \{\ho, 1, 2, 3, a, b\}$, $\Sigma_2 = \{\ho, 1, 3, 4, c, d\}$ and $I = \{\ho, 1, 3\}$.
The local $cd$-index is given by $\ell^{\Phi}_\Gamma =  c \cdot d$. 
\begin{figure}[H]
			\centering
\begin{tikzpicture}[scale=1]
		\draw (0,0) -- (0,2) node[midway, left]{$d$};
		\draw (0,2) -- (2,2) node[midway, above]{$c$};
		\draw (2,2) -- (2,0) node[midway, right]{$b$};
		\draw (0,0) -- (2,0) node[midway, below]{$a$};
		\draw [color=red] (0,0) -- (2,2);
		
		\draw [fill] (0,0) circle [radius = 1pt] node[anchor=north]{$1$};
		\draw [fill] (2,2) circle [radius = 1pt] node[anchor=south]{$3$};
		\draw [fill] (2,0) circle [radius = 1pt] node[anchor=north]{$2$};
		\draw [fill] (0,2) circle [radius = 1pt] node[anchor=south]{$4$};

		\draw [->] (2.5,1) -- (3.5,1) node[midway, above] {$\phi$};
\end{tikzpicture}
				\hspace{2pt}
\begin{tikzpicture}[scale=1]
		\draw (0,0) -- (0,2) node[midway, left]{$d$};
		\draw (0,2) -- (2,2) node[midway, above]{$c$};
		\draw (2,2) -- (2,0) node[midway, right]{$b$};
		\draw (0,0) -- (2,0) node[midway, below]{$a$};
		 
		\draw [fill] (0,0) circle [radius = 1pt] node[anchor=north]{$1$};
		\draw [fill] (2,2) circle [radius = 1pt] node[anchor=south]{$3$};
		\draw [fill] (2,0) circle [radius = 1pt] node[anchor=north]{$2$};
		\draw [fill] (0,2) circle [radius = 1pt] node[anchor=south]{$4$};								
\end{tikzpicture}
\end{figure}
\end{example}

\begin{lemma}\label{lem:cutschangeincd}
Let $\Gamma$ be a near-Eulerian poset.
Let $\sigma\colon \Pi\to \Gamma$ be a cut on $x_0$ along $(\Sigma_1, \Sigma_2, I)$ where $I\neq \emptyset$.
Then the change in the $cd$-index is given by
\[
\Phi_\Pi - \Phi_\Gamma  = \Phi_{I} \cdot d \cdot \Phi_{[x_0, \infty)}.
\]
\end{lemma}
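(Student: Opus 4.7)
The plan is to enumerate chains to compute $\Psi_\Pi - \Psi_\Gamma$ and then extract the claimed $cd$-identity using the uniqueness of the decomposition $\Psi_P = \ell^\Psi_P + \Psi_{\partial P}\cdot a$ from Lemma~\ref{l:localcd}. Since $\Pi$ and $\Gamma$ differ only by the replacement of $x_0$ with $\{y, x_1, x_2\}$, only chains meeting these elements contribute to $\Upsilon_\Pi - \Upsilon_\Gamma$. First I would enumerate: chains of $\Gamma$ through $x_0$ split at $x_0$ and contribute $\Upsilon_{[\ho,x_0)} \cdot b \cdot \Upsilon_{[x_0,\infty)}$; chains of $\Pi$ meeting $\{y, x_1, x_2\}$ fall into three types (since $x_1, x_2$ are incomparable and $\rho(y)=k-1$, $\rho(x_i)=k$): containing only $y$, only $x_i$ for some $i$, or both $y$ and $x_i$. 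Using that below $y$ is $I$, below $x_i$ without $y$ is $\Sigma_i$, and above either $x_i$ or $y$ (beyond $\{x_1, x_2\}$) coincides with $(x_0,\infty) \subset \Gamma$, these contribute $[\Upsilon_I \cdot ba + (\Upsilon_{\Sigma_1}+\Upsilon_{\Sigma_2})\cdot b + 2\Upsilon_I\cdot bb] \cdot \Upsilon_{[x_0,\infty)}$.

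The combinatorial key is the identity $\Upsilon_{[\ho,x_0)} = \Upsilon_{\Sigma_1} + \Upsilon_{\Sigma_2} - \Upsilon_I \cdot a$, where the trailing $a$ compensates for the rank gap between $I$ (rank $k-2$) and $\Sigma_i$ (rank $k-1$). This rests on the observation that elements of $\Sigma_1\setminus I$ and $\Sigma_2\setminus I$ are mutually incomparable---each $\Sigma_i$ is a lower order ideal with $\Sigma_1\cap\Sigma_2 = I$---so every chain in $\Sigma_1\cup\Sigma_2$ lies entirely in $\Sigma_1$ or in $\Sigma_2$. Substituting collapses the bracketed expression to $\Upsilon_I \cdot (ab + ba + 2bb) \cdot \Upsilon_{[x_0,\infty)}$, and passing to the $ab$-index via $a\mapsto a-b$ transforms the middle factor into $ab+ba = d$. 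Thus
\[\Psi_\Pi - \Psi_\Gamma = \Psi_I \cdot d \cdot \Psi_{[x_0,\infty)}.\]

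To upgrade to the claimed $cd$-identity, I use that $I$ is the boundary of an Eulerian poset (Proposition~\ref{p:nearEulboundisboundEul}), so $\Psi_I = \Phi_I$, and that $\Pi$ is near-Eulerian (Proposition~\ref{p:sfsses} applied after extending $\sigma$ over $\ov{\ses\Gamma}$). By Corollary~\ref{p:smallernear}, $[x_0,\infty)$ is near-Eulerian when $x_0\in\partial\Gamma$ and the boundary of an Eulerian poset otherwise; in either case $\Psi_{[x_0,\infty)}$ admits a unique decomposition $f + g\cdot a$ with $f, g$ $cd$-expressible (by Lemma~\ref{l:localcd} in the former case, trivially with $g = 0$ in the latter), where $f$ and $g$ read off $\ell^\Phi_{[x_0,\infty)}$ and $\Phi_{\partial[x_0,\infty)}$ respectively (interpreted as $\Phi_{[x_0,\infty)}$ and $0$ in the boundary-of-Eulerian case). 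Matching the two parts of the displayed identity via the analogous decomposition of $\Psi_\Pi - \Psi_\Gamma$ yields
\[\ell^\Phi_\Pi - \ell^\Phi_\Gamma = \Phi_I \cdot d \cdot \ell^\Phi_{[x_0,\infty)}, \qquad \Phi_{\partial\Pi} - \Phi_{\partial\Gamma} = \Phi_I \cdot d \cdot \Phi_{\partial[x_0,\infty)},\]
and summing via Definition~\ref{def:localcdfornear} gives the claim. The main obstacle is the chain enumeration together with the incomparability argument underlying the inclusion-exclusion on $\Upsilon_{[\ho,x_0)}$; once those are in place the extraction of the $cd$-identity is routine bookkeeping.
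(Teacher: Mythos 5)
Your proof is correct and follows the same approach as the paper's: the same enumeration of chains of $\Pi$ through $\{y,x_1,x_2\}$ into three types, the same inclusion-exclusion identity $\Upsilon_{[\ho,x_0)} = \Upsilon_{\Sigma_1} + \Upsilon_{\Sigma_2} - \Upsilon_I\cdot a$, and the same substitution $a\mapsto a-b$ yielding $\Psi_\Pi - \Psi_\Gamma = \Psi_I\cdot d\cdot\Psi_{[x_0,\infty)}$. You supply additional detail on two points the paper leaves implicit, namely the incomparability argument underlying the inclusion-exclusion and the case-split extraction from the $\Psi$-identity to the stated $\Phi$-identity via the uniqueness of the $f+g\cdot a$ decomposition.
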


\begin{proof}
The difference in the flag enumerator is given by
\begin{equation}
\label{e:flagdifference}
\Upsilon_\Pi  - \Upsilon_\Gamma
=
\sum_{\substack{C \text{ a chain of } \Pi \\ 
C \text{ contains $y$, $x_1$, or $x_2$}}}
 u_{\rho(C)}  
  \, \, \, \, \, 
-
\sum_{\substack{C \text{ a chain of } \Gamma \\ C \text{ contains } x_0}} u_{\rho(C)}.
\end{equation}
In the first sum, there are three types of chains: those that contain $y$ only, those that contain $x_i$ only, and those that contain both $y$ and some $x_i$. Hence we have
\begin{align*}
\sum_{\substack{C \text{ a chain of } \Pi \\ 
C \text{ contains $y$, $x_1$, or $x_2$}}} u_{\rho(C)}  
&  = 
 \bigg(
 \Upsilon_I \cdot b a
 +
( \Upsilon_{\Sigma_1} \cdot b
 +
 \Upsilon_{\Sigma_2} \cdot b)
 +
 \Upsilon_I \cdot 2 b b
\bigg) \cdot \Upsilon_{[x_0, \infty)}\\
& = 
 \bigg(
 \Upsilon_I \cdot b a
 +
(\Upsilon_I \cdot a + \Upsilon_{[\ho, x_0)})\cdot b
 +
 \Upsilon_I \cdot 2 b b
\bigg) \cdot \Upsilon_{[x_0, \infty)},
\end{align*}
where the last equality comes from the fact $\Upsilon_{\Sigma_1} + \Upsilon_{\Sigma_2} - \Upsilon_{I} \cdot a  = \Upsilon_{[\ho, x_0)}$.
On the other hand, the second sum in \eqref{e:flagdifference} is given by 
\[
\sum_{\substack{C \text{ a chain of } \Gamma \\ C \text{ contains } x_0}} u_{\rho(C)}
= 
\Upsilon_{[\ho, x_0)} \cdot b \cdot \Upsilon_{[x_0, \infty)}.
\]
Thus the difference becomes
\[
\Upsilon_\Pi  - \Upsilon_\Gamma
=
 \Upsilon_I  \cdot 
(ba + ab + 2bb) 
\cdot \Upsilon_{[x_0, \infty)},
\]
which gives the desired result after a change of variables.
\end{proof}

Note that the change in the $cd$-index does not depend on $\Sigma_1$ and $\Sigma_2$. Hence we only need to specify the element $x_0$ and the lower order ideal $I$ to carry out the computation, as long as a cutting operation along such $I$ exists.
We may also omit $\Sigma_1$ and $\Sigma_2$ when they are clear.

If we have a sequence of cuts, we can use Lemma~\ref{lem:cutschangeincd} to compute the change in the $cd$-index of each cut, and the sum of the changes gives the total change in the $cd$-index.
This is particularly useful when computing the local $cd$-index, see Example~\ref{exam:polygons}, Example~\ref{exam:3polys} and Example~\ref{exam:trig}.

\begin{prop}\label{p:nicecuts}
For a cut $\phi$ along $I\simeq \partial \cB_{k-1}$ with $k>2$, the local $h$-polynomial vanishes and the mixed $h$-polynomial is the same as that of the identity map $\id \colon B\to B$.
\end{prop}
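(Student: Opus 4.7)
The plan is to reduce the two vanishing claims to a single scalar identity $F(\Phi_{\cB_{k-1}}\cdot d)=0$. First I compute $\ell^\Phi_{\Gamma_x}$ for every $x\in B$. For $x\neq\ho$ with $x\not\geq x_0$, the preimage $\Gamma_x=[\ho,x]$ is Eulerian of positive rank, so $\ell^\Phi_{\Gamma_x}=0$ by Proposition~\ref{p:localcdvanishing}. At $x=x_0$ the restriction $\phi_{x_0}\colon\Gamma_{x_0}\to[\ho,x_0]$ is itself a cut at the top; Lemma~\ref{lem:cutschangeincd} yields $\Phi_{\Gamma_{x_0}}-\Phi_{[\ho,x_0]}=\Phi_I\cdot d$, so $\ell^\Phi_{\Gamma_{x_0}}=\Phi_I\cdot d$ after subtracting $\Phi_{\partial\Gamma_{x_0}}$. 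For $x>x_0$, combining Lemma~\ref{lem:cutschangeincd} with the decomposition theorem applied to $\phi_x\colon\Gamma_x\to[\ho,x]$ yields the inductive relation $\sum_{x_0<z<x}\ell^\Phi_{\Gamma_z}\cdot\Phi_{[z,x]}+\ell^\Phi_{\Gamma_x}=0$, which forces $\ell^\Phi_{\Gamma_x}=0$ by induction on $\rk(x)-\rk(x_0)$. Since $\Omega_{\id_B}=\Phi_B$, this yields the clean correction
\[
\Omega_\phi-\Omega_{\id_B}=\Phi_I(c',d')\cdot d'\cdot\Phi_{[x_0,\hi]}(c,d,e).
\]

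Next I push this correction through $L_\Omega=\nabla\circ(H_\Omega\otimes G^*)\circ\varrho$. Expanding $\varrho$ via Proposition~\ref{p:cdindexismorphism} produces a sum over $y\in[x_0,\hi]$, and an analysis using the explicit formulas of Proposition~\ref{p:valuesofFandG} shows that each summand carries the common factor $F(\Phi_I\cdot d)$; factoring it out, the residual sum collapses to $\ell^h_{[x_0,\hi]}([x_0,\hi];t,u)$, which vanishes whenever $[x_0,\hi]$ is Eulerian of positive rank. Hence $L_\Omega(\Omega_\phi-\Omega_{\id_B})=0$ automatically when $x_0\neq\hi$, and reduces to the scalar $F(\Phi_{\cB_{k-1}}\cdot d)$ when $x_0=\hi$. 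The parallel calculation for $H'_\Omega=\nabla\circ(L'_\Omega\otimes G')\circ\varrho$ also extracts the same factor $F(\Phi_I\cdot d)$: the $y>x_0$ contributions to $\varrho(\Omega_\phi-\Omega_{\id_B})$ are annihilated by the vanishing of $\ell^h_{[x_0,y]}([x_0,y])$ inside $L'_\Omega$, and the $y=x_0$ contribution is $(t'u')^k\,F(\Phi_I\cdot d)(t/t',u/u')\cdot G'(\Phi_{[x_0,\hi]})$.

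The heart of the argument is thus $F(\Phi_{\cB_{k-1}}\cdot d)=0$ for $k>2$. When $k$ is odd, every $cd$-monomial contribution contains a factor $T_{k_j+1}$ with $k_j+1$ even, which vanishes by Proposition~\ref{p:valuesofFandG}. When $k$ is even, I induct on $k$ via the pyramid recursion
\[
\Phi_{\cB_{k-1}}=\tfrac12\Big(\Phi_{\cB_{k-2}}\,c+c\,\Phi_{\cB_{k-2}}+\sum_{i=1}^{k-3}\binom{k-2}{i}\Phi_{\cB_i}\,d\,\Phi_{\cB_{k-2-i}}\Big).
\]
After multiplying by $d$ and applying $F$, the product-of-$d$ terms factor via the explicit formulas into products of $G(\Phi_{\cB_i}\cdot d)$ and $F(\Phi_{\cB_{k-2-i}}\cdot d)$, which vanish by the inductive hypothesis; the two $c$-terms cancel against one another after direct monomial expansion. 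The base cases $k=3,4$ reduce to $F(cd)=0$ and $F(c^2d)+F(d^2)=0$ from the tabulated values. The hard part will be tracking these cancellations for $k$ even; once the scalar identity is in hand, both $L_\Omega$ and $H'_\Omega$ annihilate the correction, giving $\ell^h_B(\Gamma)=0$ and $h_B(\Gamma;t',u',t,u)=h_B(B;t',u',t,u)=g(B;tt',uu')$.
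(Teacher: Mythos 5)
Your reduction to a single scalar identity is essentially the paper's strategy: both arguments start from $\Omega_\phi - \Omega_{\id} = \Phi_I(c',d')\cdot d'\cdot\Phi_{[x_0,\hi]}(c,d,e)$ and annihilate the correction term under $L_\Omega$ and $H'_\Omega$ by showing that $H_\Omega$ kills every factor produced by $\varrho$, which comes down to showing $G(\Phi_{\cB_{k-1}}\cdot d)=0$ for $k>2$ (equivalently $F(\Phi_{\cB_{k-1}}\cdot d)=0$, since $G(\Phi\cdot d)=u\,F(\Phi\cdot d)$ for any $cd$-polynomial $\Phi$, as one reads off from Proposition~\ref{p:valuesofFandG}). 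Your parity argument correctly disposes of odd $k$.

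For even $k$, however, you have a genuine gap. You outline an induction via the pyramid recursion and explicitly admit that tracking the cancellations is ``the hard part,'' which is not a deferral of routine bookkeeping: the claim that the product-of-$d$ summands ``vanish by the inductive hypothesis'' fails at the endpoints, since $F(\Phi_{\cB_1}\cdot d)=F(d)=tu\neq 0$, so for $k=4$ the lone $d$-summand $F(\Phi_{\cB_1}\,d\,\Phi_{\cB_1}\,d)=(tu)^2$ survives and must be cancelled by the two $c$-summands, and even for larger even $k$ the $c$-summands require a separate cancellation argument that you do not supply. (Your intermediate remark that the residual after factoring out $F(\Phi_I\cdot d)$ ``collapses to $\ell^h_{[x_0,\hi]}([x_0,\hi])$'' is also off by a factor of $u$ between the $y=x_0$ and $y>x_0$ pieces, though this is moot once the scalar identity holds.) The paper avoids the induction entirely by citing \cite[Proposition~7.11]{bayer00}: for a $cd$-monomial $v$ of degree $n$, $G(v\cdot d)=tu\,T_{=n/2}(G(v))$ when $n$ is even and $G(v\cdot d)=0$ when $n$ is odd, where $T_{=m}$ extracts the part of $t$-degree exactly $m$. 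By linearity, $G(\Phi_{\cB_{k-1}}\cdot d)=tu\,T_{=(k-2)/2}(u^{k-1})$, which is zero for all $k>2$ because $u^{k-1}$ has $t$-degree $0$. Importing that degree-extraction formula closes your gap cleanly with no induction.
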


\begin{proof}
Recall we have $G(\Phi_{\cB_n}) = g(\cB_n;t,u) = u^n$. 
From \cite[Proposition~7.11]{bayer00} we have that for any $cd$-monomial of degree $n$
\[
G(v \cdot d) = \begin{cases}
tu T_{=n/2}(G(v)) & \text{if $n$ is even}\\
0 & \text{if $n$ is odd},
\end{cases}
\]
where $T_{=m}(f(t, u))$ is the sum of terms in $f(t, u)$ with $t$-degree exactly $m$. 
In particular, for $k > 2$ we have 
\[
G(\Phi_{\cB_{k-1}} \cdot d) =  0.
\]
Together with \eqref{eq:mapHtimesd}, 
for any $v\in R_\Omega$ and $k>2$ we have
\[
H_\Omega \left(\Phi_{\cB_{k-1}}(c', d') \cdot d' \cdot v\right)= G\left(\Phi_{\cB_{k-1}}(c, d) \cdot d\right)  \cdot H_\Omega(v) 
 =0.
\]
Thus for a cut $\phi$ along $I\simeq \partial \cB_{k-1}$ with $k>2$, by the definitions of $L_\Omega$ and $H'_\Omega$, 
we have
\begin{align*}
L_\Omega(\Omega_\phi) & = L_\Omega(\Phi_B) + H_\Omega(\Phi_{\cB_{k-1}}(c', d') \cdot d') \cdot f_1(t, u) = 0
\\
H'_\Omega(\Omega_\phi) & = H'_\Omega(\Phi_B) + H_\Omega(\Phi_{\cB_{k-1}}(c', d') \cdot d') \cdot f_2(t', u', t, u) = H'_\Omega(\Omega_\id)
\end{align*}
for some polynomials $f_1, f_2$. 
\end{proof}

We now study subdivisions of polygons. 

\begin{lemma}\label{l:factorintocuts}
Given a polygon $B$, any polytopal subdivision $\phi\colon \Gamma \to B$ (as defined in \cite[Example~5.2]{ziegler07}) can be factored into cuts.
\end{lemma}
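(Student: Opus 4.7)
The plan is to prove this by induction on $|\Gamma|$, peeling off one cut at a time. The base case $\Gamma = B$ admits the empty factorization. For the inductive step I will identify three elements $y, x_1, x_2 \in \Gamma$ that can be merged into a single element $x_0$, producing a coarser strong formal subdivision $\phi' \colon \Gamma' \to B$ with $|\Gamma'| = |\Gamma| - 2$, such that the map $\Gamma \to \Gamma'$ (sending $y, x_1, x_2$ to $x_0$ and fixing everything else) is a cut in the sense of the paper. The inductive hypothesis applied to $\phi'$ then completes the factorization of $\phi$.

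The choice of triple splits into two geometric cases. In the first case, $\Gamma$ contains a rank-$2$ element $y$ lying strictly in the interior of $B$. Since $\Gamma$ subdivides the two-dimensional polytope $B$, this edge $y$ is covered by exactly two rank-$3$ elements $x_1, x_2$. I merge them with $y$ into a single rank-$3$ element $x_0$, corresponding to a cut on $x_0$ along $(\Sigma_1, \Sigma_2, I)$ where $\Sigma_i \subset \Gamma'$ is the set of proper faces of the $2$-cell $x_i$ other than $y$, and $I = \Sigma_1 \cap \Sigma_2 \simeq \partial \cB_2$ consists of $\ho$ together with the two endpoints of $y$. In the second case, $\Gamma$ has no interior edges, so $\Gamma$ differs from $B$ only by vertices added on boundary edges; I pick any such vertex $y$ with its two adjacent sub-edges $x_1, x_2$ and merge them into a single edge $x_0$. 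This is a cut with $I = \{\ho\}$ and $\Sigma_i$ the chain containing $\ho$ and the far endpoint of $x_i$.

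In both cases I must verify the cut conditions---that the $\Sigma_i$ are near-Eulerian lower order ideals with $\Sigma_1 \cup \Sigma_2 = [\ho, x_0)$ in $\Gamma'$ and $\Sigma_1 \cap \Sigma_2 = I = \partial \Sigma_1 = \partial \Sigma_2$---which is routine from the structure of the boundary posets of polygonal $2$-cells (a path is the boundary of a $2$-cell in an Eulerian polygon poset). I also check that $\phi'$ is a strong formal subdivision, which follows from $\phi(y) = \phi(x_1) = \phi(x_2)$ together with the factorization $\phi = \phi' \circ (\Gamma \to \Gamma')$. The main obstacle is that after merging two $2$-cells in the first case, the merged ``cell'' $x_0$ need not correspond to a convex polygon, so $\Gamma'$ may fail to be a polytopal subdivision in the geometric sense. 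I would address this by strengthening the induction to apply to all strong formal subdivisions of a polygon admitting the same two-case dichotomy---a class closed under the cut operation---which still admits the geometric reasoning above.
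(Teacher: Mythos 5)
Your approach takes a genuinely different route from the paper's. You induct on $|\Gamma|$ and peel off one cut at a time, merging either an interior edge $y$ together with the two $2$-cells $x_1, x_2$ that cover it (a cut along $I \simeq \partial\cB_2$) or a subdividing vertex on a boundary edge together with its two incident sub-edges (a cut along $I = \{\ho\} \simeq \partial\cB_1$). The paper instead first separates $\phi$ into a boundary-subdivision stage and an interior-subdivision stage via the intermediate maps; the boundary stage is exactly your Case~2 applied repeatedly, while the interior stage is handled by divide-and-conquer on the number of $2$-faces, splitting $\Gamma$ along an internal path into two smaller pieces and invoking the inductive hypothesis on each. Both are valid; your case analysis is correct, since the polytopal-complex axiom ensures $x_1\cap x_2 = y$, whence $\Sigma_1, \Sigma_2$ are the two boundary paths of the merged cell, both near-Eulerian with $\partial\Sigma_1 = \partial\Sigma_2 = I$.

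The issue you flag at the end is a genuine gap in the proof as written: the coarsened poset $\Gamma'$ need not be a polytopal subdivision in Ziegler's sense, since the merged cell may be a non-convex polygon, so the inductive hypothesis for the lemma as literally stated does not apply to $\phi'\colon\Gamma'\to B$. Your proposed fix --- strengthening the induction to a larger, cut-closed class --- is the right idea but is left unspecified. The cleanest version is to induct over face posets of regular CW decompositions of a disc into combinatorial polygons, which is all that your two-case dichotomy and the definition of the cutting operation actually use. Note that the paper's own proof glosses over the same subtlety: when it splits $\Gamma$ along a path $L$ into components $C_1$ and $C_2$, the two regions bounded by $L$ need not be convex either, so the recursive subdivisions $C_i \to \ov{\partial C_i}$ are likewise polytopal only up to abstract isomorphism of face posets. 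So this is not a defect peculiar to your argument, but it should be stated explicitly in either proof.
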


\begin{proof}
Using the theory of intermediate maps we first decompose $\phi$ into the subdivision of the boundary $\phi_{\partial B}$ and the subdivision of the interior $\phi_{\hi}$. 
Since we have $\phi = \phi_{\hi} \circ \phi_{\partial B}$, it suffices to prove that each subdivision can be factored into cuts.

For $\phi_{\partial B}$, the subdivision is determined by the number of vertices added to the edges.
Note that each addition of a new vertex can be considered as a cutting operation on an edge along $I\simeq \partial \cB_1$, hence $\phi_{\partial B}$ can be factored into cuts.

For $\phi_\hi$, we prove by induction on $f_2$, the number of faces.
If $f_2 = 1$, then $\phi_\hi$ is trivial.
If $f_2=2$, by considering the graph of $\Gamma$ we have that every vertex in the interior is contained by exactly two edges. Thus, this subdivision can be obtained by applying cutting operations on edges along $I\simeq \partial \cB_1$. Undoing all such cutting operations, we arrive at a planar graph with two faces and one edge in the interior, which can be obtained from applying a cutting operation on a face along the endpoints of the interior edge $I \simeq \partial \cB_2$.
This shows that $\phi_\hi$ can be factored into cuts. 

For $f_2>2$, we pick a polytopal path $L$ in $\Gamma$ with only the endpoints contained in the boundary of $\Gamma$.
The path divides $\Gamma$ into two connected component $C_1$ and $C_2$, each with fewer than $f_2$ many faces. Then by induction the maps from the component $C_i$ to $\ov{\partial C_i}$ can be factored into cuts for $i=1, 2$. Together with the case with $f_2 =2$, 
we conclude that $\phi_{\hi}$ can be factored into cuts.
\end{proof}

Note that the proof works for a polytopal subdivision of the boundary of a $3$-polytope.

\begin{example}[Subdivision of polygons]\label{exam:polygons}
Let $B$ be a polygon with $n$ vertices and $\phi\colon \Gamma \to B$ be a polytopal subdivision of $B$.
By Lemma~\ref{l:factorintocuts} the map $\phi$ can be factored into cuts of three types, namely the cut of an edge on the boundary, the cut of an edge in the interior and the cut of a $2$-face.
Each type of cuts changes the triple $(\beta, \gamma, \epsilon)$, where 
{\renewcommand{\arraystretch}{1.4}
	\begin{center}
	$\begin{tabu}{l c l }
\beta &  := &   \# \{x\in \Gamma \, | \, 
\rho_\Gamma(x)= 1, \rho_B(\phi(x)) = 2 \}, \\ \relax
\gamma
& := & \# \{x\in \Gamma \, | \, 
\rho_\Gamma(x)= 1, \rho_B(\phi(x)) = 3 \}, \\ \relax
\epsilon
 & := & \# \{x\in \Gamma \, | \, 
\rho_\Gamma(x)= 2, \rho_B(\phi(x)) = 3 \}.
 \relax
\end{tabu}$
	\end{center}}
\noindent
By looking at the definition of the cutting operation, the cut of an edge on the boundary contributes $1$ to $\beta$, the cut of an edge in the interior contributes $1$ to both $\gamma$ and $\epsilon$, while the cut of a $2$-face contributes $1$ to $\epsilon$.
We  use Lemma~\ref{lem:cutschangeincd} to compute the local $cd$-indices. Together with $\Phi_B = c^2 + (n-2) d$, the mixed $cd$-index of $\phi = \phi_\hi \circ \phi_\Gamma$ is given by
\[
\Omega_{\phi} = c^2 + (n-2) d + \beta d' + 
\gamma d' c' e + (\epsilon - \gamma) c' d' e,
\]
Hence by applying the linear map $H'_\Omega$, we compute the mixed $h$-polynomial 
\[h_B(\Gamma) = 
(u u')^3 + (n-3) t t' u^2 (u')^2
+
\beta t t' u^2 (u')^2+
\gamma t t' u u'
(t' u + t u').
\]
This agrees with the results in \cite[Example~5.7]{katz16}.
Note that the polynomial does not depend on $\epsilon$, since a cut along $I \simeq \partial \cB_2$ does not contribute to the $h$-polynomial.
\end{example}

\begin{example}[Subdivision of the boundary of a $3$-polytope]\label{exam:3polys}
Let $B$ be a $3$-polytope with $f$-vector $(1, f_0, f_1, f_2, 1)$.
It is known that the $cd$-index of $B$ is given by 
\[\Phi_B = c^3 + (f_0 -2 ) dc + (f_2-2) cd.
\]
Let $\phi\colon \Gamma \to B$ be a polytopal subdivision of the boundary of $B$. This implies the poset $\Gamma$ is also Eulerian.
By Lemma~\ref{l:factorintocuts}, we factor $\phi$ into cuts.
By counting the number of cuts of each type, we use Lemma~\ref{lem:cutschangeincd} to compute the mixed $cd$-index of $\phi$ as
\[
\Omega_\phi= 
c^3 + (f_0 -2 ) dc + (f_2-2) cd 
+
\beta d' c + 
\gamma d' c'
+
(\epsilon - \gamma)c' d',
\]
where $\beta$, $\gamma$ and $\epsilon$ are as above.
Then the mixed $h$-polynomial is given by 
\[
h_B(\Gamma) = 
(u u')^4 +(f_0- 4) t t' (u u')^3+
\beta t t' (u u')^3 
+\gamma (t (t')^2 u^3 (u')^2 + t^2 t' u^2 (u')^3 - t^2 (t')^2 u^2 (u')^2),
\]
which is independent of $f_2$ and $\epsilon$.
Note that even though $h_B(\Gamma)$ has negative coefficients, under the specialization $t', u'\mapsto 1$ we have 
$h(\Gamma) = 
u^4 +(f_0- 4) t u^3+
\beta t u^3
+\gamma t  u^3$, the $h$-polynomial of $\Gamma$, which has non-negative coefficients.
\end{example}

\begin{example}[Triangulation of $3$-polytopes]\label{exam:trig}
Let $B$ be a $3$-polytope with $f$-vector $(1, f_0, f_1, f_2, 1)$.
In this example we compute the mixed $cd$-index for any triangulation $\phi \colon \Gamma\to B$
having the following properties: for any $x$ with $\rho(x)=3$, the rank of every element of $\phi^{-1}(x)$ is $2$ or $3$; and every element of $\phi^{-1}(\hi)$ is of rank $3$ or $4$. 
Note that such triangulation does not exist for every polytope. 
First, we prove $\phi$ factors into cuts. 
By Lemma~\ref{l:factorintocuts}, it suffices to prove this for $\phi_\hi$, the subdivision of the interior.
By hypothesis no edges nor vertices are introduced in the interior. Thus each $2$-face introduced in the interior has boundary isomorphic to $\partial \cB_3$ since $\phi$ is a triangulation. Hence each introduction of a $2$-face in the interior is a cut along $I\simeq \partial \cB_3$, and $\phi_\hi$ can be factored into cuts.

We only use two types of cuts: the cut on a $2$-face on the boundary that adds an edge and a face, and the cut on a $3$-cell in the interior along $I\simeq \partial \cB_3$.
By considering the graph of the boundary of the polytope, there are $2 f_1 - 3 f_2$ many cuts along $I\simeq \partial \cB_2$. 
By considering the number of rank $4$ elements in $\Gamma$, there are $f_3(\Gamma) -1 $ many cuts along $I\simeq \partial \cB_3$.
With $\Phi_{\cB_3} = c^2 +d$, we compute the mixed $cd$-index
\[
\Omega_{\phi} = 
c^3 + (f_0 -2 ) dc + (f_2-2) cd 
+
(2 f_1 - 3 f_2)c' d'
+
(f_3(\Gamma) -1 ) ((c')^2 + d') d' e,
\]
and the mixed $h$-polynomial is given by
\[
h_B(\Gamma) = 
(u u')^4 +(f_0 -4 ) t t' (u u')^3,
\]
which is the mixed $h$-polynomial of the identity map $\id \colon B\to B$, since those cuts do not contribute to the mixed $h$-polynomial.

For example when $P$ is the cube, since the triangulation of the cube into $5$ simplicies 
\cite[Figure~2.61]{deloera10} satisfies the properties, the mixed $cd$-index is given by
\[
\Omega_{\phi} = 
c^3 + 6 dc + 4 cd +6 c' d' + 4 ((c')^2 + d') d' e,\]
and the mixed $h$-polynomial is given by
\[
h_B(\Gamma) = (u u')^4 + 4 t t' (u u')^3.
\]
\end{example}

\begin{remark}
We remark that in \cite{murai14}, Murai and Nevo characterized the $cd$-index of Gorenstein$^*$ posets of rank $5$ using the zipping operation, the dual of the cutting operation.
\end{remark}

\begin{example}\label{exam:threesubs}
Let $B:=\cB_4$ be the simplex with $4$ vertices and $\phi\colon \Gamma\to B$ be the subdivision of $B$ into the triangular bipyramid $\Gamma$ by replacing a facet with the pyramid of its boundary.
The local $cd$-index is given by $\ell^\Phi_\Gamma = 0$, since there is no subdivision in the interior. 
Following Example~\ref{exam:3polys}, we get $\gamma = 1$ and $\epsilon - \gamma  = 2$. Thus the mixed $cd$-index of $\phi$ is given by 
\[\Omega_\phi = c^3 + 2dc + 2cd  + d'c' + 2 c'd'.
\]
By applying $L_\Omega$ and $H'_\Omega$, we get the local $h$-polynomial $\ell^h_B(\Gamma) = - t^2 u^2$ and the mixed $h$-polynomial 
$h_B(\Gamma) = (u u')^4 
+t u^3 (t')^2 (u')^2 
+t^2 u^2 (t') (u')^3
- t^2 u^2 (t')^2 (u')^2$.
Note that both the local $h$-polynomial and the mixed $h$-polynomial have negative coefficients while the mixed $cd$-index does not.
\end{example}

\begin{remark}
The mixed $h$-polynomial of a rational polytopal subdivision is non-negative \cite[Theorem~6.1]{katz16}. Hence the example shows that the extension (as in Definition~\ref{def:extension}) of a polytopal subdivision may not be polytopal.
\end{remark}

\begin{remark}
This example was originally given by C. Chan as an example to show that the local $h$-polynomial of a simplicial subdivision may have negative coefficients. It was also studied in \cite[Example~5.6]{katz16}. 

\end{remark}

\bibliographystyle{plain}

\end{document}